\def\NZQ{\Bbb}               
\def\ZZ{{\NZQ Z}}
\def\RR{{\NZQ R}}
\def\frk{\frak}               
\def\Phi{{\frk n}}
\def\Phi{{\frk N}}
\def\wb{{\bold w}}
\def\ub{{\bold u}}
\def\vb{{\bold v}}
\def\opn#1#2{\def#1{\operatorname{#2}}} 
\opn\chara{char} \opn\length{\ell} \opn\pd{pd} \opn\rk{rk}
\opn\projdim{proj\,dim} \opn\injdim{inj\,dim} \opn\rank{rank}
\opn\depth{depth} \opn\grade{grade} \opn\height{height}
\opn\embdim{emb\,dim} \opn\codim{codim}
\opn\Tr{Tr} \opn\bigrank{big\,rank}
\opn\superheight{superheight}\opn\lcm{lcm}
\opn\trdeg{tr\,deg}
\opn\reg{reg} \opn\lreg{lreg} \opn\ini{in} \opn\lpd{lpd}
\opn\size{size}\opn\bigsize{bigsize}
\opn\cosize{cosize}\opn\bigcosize{bigcosize}
\opn\sdepth{sdepth}\opn\sreg{sreg}
\opn\link{link}\opn\fdepth{fdepth}
\opn\index{index}
\opn\index{index}
\opn\indeg{indeg}
\opn\N{N}
\opn\SSC{SSC}
\opn\SC{SC}
\opn\conv{conv}
\opn\div{div} \opn\Div{Div} \opn\cl{cl} \opn\Cl{Cl}
\opn\Spec{Spec} \opn\Supp{Supp} \opn\supp{supp} \opn\Sing{Sing}
\opn\Ass{Ass} \opn\Min{Min}\opn\Mon{Mon} \opn\dstab{dstab} \opn\astab{astab}
\opn\Syz{Syz}
\opn\reg{reg}
\opn\Ann{Ann} \opn\Rad{Rad} \opn\Soc{Soc}
\opn\Im{Im} \opn\Ker{Ker} \opn\Coker{Coker} \opn\Am{Am}
\opn\Hom{Hom} \opn\Tor{Tor} \opn\Ext{Ext} \opn\End{End}
\opn\Aut{Aut} \opn\id{id}
\opn\nat{nat}
\opn\pff{pf}
\opn\Pf{Pf} \opn\GL{GL} \opn\SL{SL} \opn\mod{mod} \opn\ord{ord}
\opn\Gin{Gin} \opn\Hilb{Hilb}\opn\sort{sort}
\opn\initial{init}
\opn\ende{end}
\opn\height{height}
\opn\type{type}
\opn\aff{aff} \opn\con{conv} \opn\relint{relint} \opn\st{st}
\opn\lk{lk} \opn\cn{cn} \opn\core{core} \opn\vol{vol}
\opn\link{link} \opn\star{star}\opn\lex{lex}\opn\Mon{Mon}\opn\Min{Min}
\opn\gr{gr}
\def\pot#1#2{#1[\kern-0.28ex[#2]\kern-0.28ex]}
\opn\dirlim{\underrightarrow{\lim}}
\opn\inivlim{\underleftarrow{\lim}}
\let\to=\rightarrow
\def\Implies{\ifmmode\Longrightarrow \else
        \unskip${}\Longrightarrow{}$\ignorespaces\fi}
\def\implies{\ifmmode\Rightarrow \else
        \unskip${}\Rightarrow{}$\ignorespaces\fi}
\def\iff{\ifmmode\Longleftrightarrow \else
        \unskip${}\Longleftrightarrow{}$\ignorespaces\fi}
\newtheorem{Theorem}{Theorem}[section]
 \newtheorem{Lemma}[Theorem]{Lemma}
 \newtheorem{Corollary}[Theorem]{Corollary}
 \newtheorem{Proposition}[Theorem]{Proposition}
 \newtheorem{Definition}[Theorem]{Definition}
 \newtheorem*{Definition*}{Definition}
 \newtheorem*{Conjecture*}{Conjecture}
\let\epsilon\varepsilon
\let\kappa=\varkappa
\def\qed{\ifhmode\textqed\fi
      \ifmmode\ifinner\quad\qedsymbol\else\dispqed\fi\fi}
\def\textqed{\unskip\nobreak\penalty50
       \hskip2em\hbox{}\nobreak\hfil\qedsymbol
       \parfillskip=0pt \finalhyphendemerits=0}
\def\dispqed{\rlap{\qquad\qedsymbol}}
\opn\dis{dis}
\def\pnt{{\raise0.5mm\hbox{\large\bf.}}}
\opn\Lex{Lex}
\begin{document}

 \title{The edge rings of compact graphs }

 \author {Zexin Wang and Dancheng Lu }

 \begin{abstract}  We define a simple graph as compact if it lacks even cycles and satisfies the odd-cycle condition. Our focus is on classifying all compact graphs and examining the characteristics of their edge rings. Let $G$ be a compact graph and $\mathbb{K}[G]$ be its edge ring. Specifically, we demonstrate that the Cohen-Macaulay type and the projective dimension of $\mathbb{K}[G]$ are both equal to the number of induced cycles of $G$ minus one and that the regularity of $\mathbb{K}[G]$ is equal to the matching number of $G_0$. Here, $G_0$ is a graph obtained from $G$ by removing the vertices of degree one successively, such that every vertex in $G_0$ has a degree greater than 1.
 \end{abstract}

\subjclass[2010]{Primary 05E40,13A02; Secondary 06D50.}
\keywords{Compact graph, Odd-cycle condition, Regularity, Projective dimension, Canonical module, Euler formula}

\address{Zexin Wang,  School of Mathematics Science, Soochow University, P.R.China} \email{zexinwang6@outlook.com}

\address{Dancheng Lu, School of Mathematics Science, Soochow University, P.R.China} \email{ludancheng@suda.edu.cn}
 \maketitle
\section*{Introduction}
Recently, many authors have investigated the algebraic properties of edge rings of simple graphs.  Consider a simple graph $G=(V,E)$ with vertex set $V=\{x_1,\ldots,x_n\}$ and edge set $E=\{e_1,\ldots,e_r\}$. The {\it edge ring} $\mathbb{K}[G]$ is defined to be the toric ring $\mathbb{K}[x_e\:\; e\in E(G)]\subset \mathbb{K}[x_1,\ldots,x_n]$, where $x_e=\prod_{x_i\in e}x_i$ for all $e\in E(G)$. Let $\mathbb{K}[E(G)]$ (or $\mathbb{K}[E]$ for short) denote the polynomial ring $\mathbb{K}[e_1,\ldots,e_r]$ in variables $e_1,\ldots,e_r$. Then, there is exactly one ring homomorphism  $\phi:  \mathbb{K}[E(G)]\rightarrow \mathbb{K}[V] \quad {\mbox{ such that }} ~e_i\mapsto x_{e_i} ~{i=1,\ldots,r}.$
The kernel of the homomorphism map $\phi$ is called the {\it toric ideal} or the {\it defining ideal} of $\mathbb{K}[G]$ or $G$, which is denoted by $I_G$. It follows  that $\mathbb{K}[G]\cong \mathbb{K}[E(G)]/I_G$. The main focus of these studies is to establish connections between the combinatorial properties of simple graphs and the algebraic properties of their edge rings, see e.g. \cite{BOV,BV,FKT,G,GV,HBO,NN,OH} for some developments in this area.

 In 1999, Ohsugi and Hibi demonstrated in \cite{OH} that  $\mathbb{K}[G]$ is a normal domain if and only if $G$ satisfies the odd-cycle condition.
Recall a simple graph is said to satisfy the {\it odd-cycle} condition if, for every pair of cycles $C_1$ and $C_2$, either $C_1$ and $C_2$ have at least one vertex in common or there is an edge that connects a vertex of $C_1$ to a vertex of $C_2$.  We call a simple graph to be {\it compact} if it not only satisfies the odd-cycle condition but also contains no even cycles.  In this paper, we devote to investigating the properties of the edge rings of compact graphs.

Let $G$ be a compact graph. The main results of this paper can be summarized as follows. Firstly, we demonstrate that the projective dimension and Cohen-Macaulay type of $\mathbb{K}[G]$ are both equal to the number of the induced cycles of $G$ minus one. Additionally, we show that the regularity of $\mathbb{K}[G]$ coincides with the matching number of $G_0$. Here, $G_0$ refers to the graph derived from $G$  by successively removing all vertices of degree one. This finding serves as an interesting complement to the result presented in \cite[Theorem 1 (a)]{HH1}, which states if $G$ is a non-bipartite graph satisfying the odd-cycle condition, the regularity of $\mathbb{K}[G]$ does not exceed the matching number of $G$. Finally, we determine the top graded Betti numbers of $\mathbb{K}[G]$. Here, for a simple graph $G$, a {\em matching} of $G$ is a subset $M \subset E(G)$ where $e \cap e' = \emptyset$ for any distinct edges  $e, e'\in M$, and the {\em matching number} of $G$, denoted by $\mathrm{mat}(G)$, is the maximal cardinality of matchings of $G$.

 The paper is organized as follows. Let $G$ be a compact graph. Section 1 provides a brief overview of toric ideals of graphs and canonical modules.  Section 2 classifies the compact graphs up to the (essentially) same edge rings. In Section 3 we compute the universal Gr$\ddot{\mathrm{o}}$bner bases for the toric ideals of compact graphs and then obtain their initial ideals with respect to some suitable monomial order.  In Section 4, we show that all initial ideals obtained in Section 3 possess a ``good" E-K splitting, enabling us  to present a simple formula for the total Betti numbers of such ideals.  Consequently, the regularity, projection dimension, and an upper bound for the Cohen-Macaulay type of $\mathbb{K}[G]$ are derived. Section 5  provides the top graded Betti numbers for $\mathbb{K}[G]$  by computing the minimal generators of its canonical module. In Section 6,  a question regarding the Betti numbers for $\mathbb{K}[G]$ is posed.

\section{Preliminaries}
In this section, we keep the notions given in Introduction and provide a brief review of the notation and fundamental facts that will be utilized later on.
\subsection{Betti numbers and Canonical modules}
Let $R:=\mathbb{K}[x_1,\ldots,x_n]$ be the polynomial ring in variables $x_1,\ldots,x_n$, which is standard graded. For a finitely generated graded  $R-$module $M$, there exists the minimal graded free resolution of $M$ that has the form:
\begin{equation}\tag{\S}\label{free}0\rightarrow\underset{j\in\mathbb{Z}}{\bigoplus}R[-j]^{\beta_{p,j}(M)}\rightarrow \cdots \rightarrow \underset{j\in\mathbb{Z}}{\bigoplus}R[-j]^{\beta_{1,j}(M)}\rightarrow\underset{j\in\mathbb{Z}}{\bigoplus}R[-j]^{\beta_{0,j}(M)}\rightarrow M \rightarrow 0.\end{equation}
Here, $R[-j]$ is the cyclic free $R$-module generated in degree $j$. The number $\beta_{i,j}(M):={\rm{dim}}_{\mathbb{K}}\mathrm{Tor}_i^R(M,\mathbb{K})_j$ is called the $(i,j)$-th {\it graded Betti number} of $M$ and $\beta_{i}(M):=\sum_{j\in \mathbb{Z}}\beta_{i,j}$ is called the $i$-th {\it total Betti number} of $M$.
Many homological invariants of $M$ can be defined in terms of its minimal graded free resolution. The {\em{Castelnuovo-Mumford regularity}} and {\it projective dimension} of $M$ are defined to be
$$\mbox{reg}\,(M):={\mbox{max}}\,\{j-i\mid \beta_{i,\,j}(M)\neq 0\}$$  and $$\mbox{pdim}\,(M):={\mbox{max}}\,\{i\mid \beta_{i,\,j}(M)\neq 0 \mbox{ for some } j \}.$$
Denote $\mbox{pdim}\,(M)$ by $p$. Then, $\beta_p(M)$ and  $\beta_{p,j}(M), j\in \mathbb{Z}$ are referred to as the {\it top total Betti number} and the {\it top graded Betti numbers} of $M$, respectively.

 By applying the functor $\mathrm{Hom}_R(-,R[-n])$ to the sequence (\ref{free}), we obtain the following complex:
\begin{equation*}\begin{split}0\rightarrow \mathrm{Hom}_R(F_0, R[-n])\rightarrow \mathrm{Hom}_R(F_1,R[-n])\rightarrow \cdots \\ \rightarrow \mathrm{Hom}_R(F_p,R[-n])\rightarrow \mathrm{Ext}^{p}_R(M,R[-n])\rightarrow 0.\end{split}\end{equation*} Here, $F_i$ denotes the free module $\underset{j\in\mathbb{Z}}{\bigoplus}R[-j]^{\beta_{i,j}(M)}$. Assume further that $M$ is  Cohen-Macaulay.
Then, it follows from the local duality (see \cite{BH}) that the above complex is exact and so it is a minimal free resolution of $\mathrm{Ext}^{p}_R(M,R[-n])$. The module $\mathrm{Ext}^{p}_R(M,R[-n])$, also denoted by $\omega_{M}$, is called the {\it canonical module} of $M$. Note that
$$\mathrm{Hom}_R(F_i, R[-n])=\bigoplus_{j\in \mathbb{Z}}\mathrm{Hom}_R(R[-j]^{\beta_{i,j}(M)}, R[-n])=\bigoplus_{j\in \mathbb{Z}}R[-n+j]^{\beta_{i,j}(M)}.$$  Based on these discussions, we can derive the following well-known result.
\begin{Lemma} \label{can} Let $M$ be a Cohen-Macaulay graded $R=\mathbb{K}[x_1,\ldots,x_n]$-module, and $\omega_M$ its canonical module. Assume $p=\mathrm{pdim}(M)$.  Then $\beta_{i,j}(\omega_M)=\beta_{p-i,n-j}(M)$ for all $i,j.$
\end{Lemma}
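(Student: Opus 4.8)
The plan is to dualize the minimal graded free resolution (\ref{free}) of $M$ against $R[-n]$, recognize the resulting complex as the minimal graded free resolution of $\omega_M$, and then extract the asserted identity of Betti numbers by a bookkeeping of graded twists. Almost everything needed has already been recorded in the discussion preceding the statement, so the argument reduces to assembling three observations: exactness of the dualized complex, its minimality, and the reindexing of the twists. Throughout write $F_i=\bigoplus_{j}R[-j]^{\beta_{i,j}(M)}$ for the free modules in (\ref{free}).

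The first point is that $\mathrm{Hom}_R(F_\bullet,R[-n])$ is exact except at its right end. Since $R$ is regular of dimension $n$ and $M$ is Cohen-Macaulay, the Auslander--Buchsbaum formula gives $p=\mathrm{pdim}(M)=n-\depth(M)=n-\dim(M)$, and the standard characterization of Cohen-Macaulay modules over a regular (hence Gorenstein) ring yields $\mathrm{Ext}^k_R(M,R)=0$ for every $k\neq p$. Twisting does not affect this vanishing, so $\mathrm{Ext}^k_R(M,R[-n])=0$ for $k\neq p$, and therefore applying $\mathrm{Hom}_R(-,R[-n])$ to $F_\bullet$ produces a complex with homology concentrated in the top spot, where it equals $\mathrm{Ext}^p_R(M,R[-n])=\omega_M$. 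This is exactly the consequence of local duality invoked in the preamble.

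Next, minimality: because (\ref{free}) is minimal, the matrices of its differentials have all entries in $\mathfrak{m}=(x_1,\dots,x_n)$, and applying $\mathrm{Hom}_R(-,R[-n])$ replaces these matrices by their transposes, whose entries still lie in $\mathfrak{m}$. Hence the exact complex
\[
0\rightarrow \mathrm{Hom}_R(F_0,R[-n])\rightarrow\cdots\rightarrow \mathrm{Hom}_R(F_p,R[-n])\rightarrow \omega_M\rightarrow 0
\]
is a minimal graded free resolution of $\omega_M$. Reindexing it as $0\rightarrow G_p\rightarrow\cdots\rightarrow G_0\rightarrow\omega_M\rightarrow 0$, its $i$-th free module is $G_i=\mathrm{Hom}_R(F_{p-i},R[-n])=\bigoplus_{j}R[-(n-j)]^{\beta_{p-i,j}(M)}$, using the identity $\mathrm{Hom}_R(R[-j],R[-n])=R[-(n-j)]$ noted above. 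Substituting $j'=n-j$ shows that $R[-j']$ occurs in $G_i$ with multiplicity $\beta_{p-i,n-j'}(M)$, and by minimality this multiplicity is $\beta_{i,j'}(\omega_M)$; this is the formula $\beta_{i,j}(\omega_M)=\beta_{p-i,n-j}(M)$. The only step that requires any thought is the vanishing $\mathrm{Ext}^k_R(M,R)=0$ for $k\neq p$, but over the regular ring $R$ this is just the Cohen-Macaulay hypothesis restated through Auslander--Buchsbaum and local duality, so there is no genuine obstacle and the remaining steps are purely formal.
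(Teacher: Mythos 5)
Your proposal is correct and follows essentially the same route the paper sketches in the discussion preceding the lemma: dualize the minimal free resolution against $R[-n]$, use local duality (equivalently, the vanishing of $\mathrm{Ext}^k_R(M,R)$ for $k\neq p$ over the regular ring $R$) to get exactness, observe minimality is preserved, and reindex the twists via $\mathrm{Hom}_R(R[-j],R[-n])=R[-(n-j)]$. The only difference is that you spell out the Auslander--Buchsbaum and Gorenstein-duality justifications that the paper leaves implicit, which is a welcome addition rather than a deviation.
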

The {\it Cohen-Macaulay type} of  a finitely generated Cohen-Macaulay $R$-module $M$ is defined to be the number
$$\mathrm{type}{(M)} := \beta_{p}(M)=\beta_0(\omega_{M}) ,$$
where $p$ is the projective dimension of $M$.
In the following, we will consider the case when $M=\mathbb{K}[G]$ as a $\mathbb{K}[E(G)]$-module.

\subsection{Toric ideals of graphs}
Let $G$ be a simple graph, i.e., a finite graph without loops and multiple edges, with vertex set $V(G)$ and edge set $E(G)$. A {\em matching} of $G$ is a subset $M \subset E(G)$ for which $e \cap e' = \emptyset$ for $e \neq e'$ belonging to $M$.  The {\em matching number}, denoted by $\mbox{mat}(G)$, is the maximal cardinality of matchings of $G$.   Recall that a walk of $G$ of length $q$ is a subgraph $W$ of $G$ such that $E(W)=\{\{v_0,v_1\},\{v_1,v_2\},\ldots, \{v_{q-1},v_q\}\}$, where $v_0,v_1,\ldots,v_q$ are vertices of $G$. A walk $W$ of $G$ is even if $q$ is even, and it is closed if $v_0=v_q$. A cycle is a special  closed walk  with edge set $\{\{v_0,v_1\},\{v_1,v_2\},\{v_{q-1},v_q=v_0\}\}$ such that $v_1,\ldots,v_q$ are pairwise distinct and $q\geq 3$. A cycle is called {\it even} (resp. {\it odd})   if $q$ is even (resp. odd). For a subset $W$ of $V(G)$, the {\it induced subgraph} $G_W$ is the graph with vertex set $W$ and for every pair $x,y\in W$, they are adjacent in $G_W$ if and only if they are adjacent in $G$. An induced cycle of $G$ is a cycle $C$ where no two non-consecutive vertices of $C$ are adjacent in $G$.

The generators of the toric ideal of $I_G$ are binomials which are tightly related to even closed walks in $G$. Given an even closed walk $W$ of $G$ with
$$E(W)=\{\{v_0,v_1\},\{v_1,v_2\},\ldots,\{v_{2q-2},v_{2q-1}\},\{v_{2q-1},v_0\}\},$$
we associate $W$ with the binomial defined by
$$f_W:=\prod\limits_{j=1}^{q}e_{2j-1}-\prod\limits_{j=1}^{q}e_{2j},$$
where $e_j=\{v_{j-1},v_j\}$ for $1\leq j\leq 2q-1$ and $e_{2q}=\{v_{2q-1},v_0\}$. A binomial $f = u - v \in I_{G}$ is called a {\it primitive binomial} if there is no binomial $g = u' - v' \in I_{G}$ such that $u'|u$ and $v'|v$. An even closed walk $W$ of $G$ is a {\it primitive even closed walk} if its associated binomial $f_{W}$ is a primitive binomial in $I_G$. It is known that the set  $$\{f_W\:\; W \mbox{ is a primitive even closed walks of } G\}$$ is the universal  Gr$\ddot{\mathrm{o}}$bner base of $I_G$ by e.g. \cite[Proposition 10.1.10]{V} or \cite[Proposition~5.19]{EH}.  In particular, it is a Gr$\ddot{\mathrm{o}}$bner base of $I_G$ with respect to any monomial order. The set of primitive even walks of a graph $G$ was described in \cite{HHO} explicitly.

\begin{Lemma}\label{PECW}\cite[Lemma~5.11]{HHO}
A primitive even closed walk $\Gamma$ of $G$ is one of the following:
\begin{enumerate}
\item[$(i)$]  $\Gamma$ is an even cycle of $G$;
\item[$(ii)$] $\Gamma$ = $(C_{1}, C_{2})$, where each of $C_{1}$ and $C_{2}$ is an odd cycle of $G$ having exactly
one common vertex;
\item[$(iii)$] $\Gamma$ = $(C_{1}, \Gamma_{1}, C_{2},\Gamma_{2})$, where each of $C_{1}$ and $C_{2}$ is an odd cycle of $G$ with
$V (C_{1})\cap V (C_{2}) =\emptyset$ and where $\Gamma_{1}$ and $\Gamma_{2}$ are walks of $G$ of the forms $\Gamma_{1} =(e_{i_{1}},\ldots,e_{i_{r}})$ and $\Gamma_{1} =(e_{i^{'}_{1}} ,\ldots,e_{i^{'}_{r^{'}}} )$ such that $\Gamma_{1}$ combines $j\in e_{i_{1}}\cap e_{i^{'}_{r^{'}}}\cap V (C_{1})$ with $j^{'}\in e_{i_{r}}\cap e_{i^{'}_{1}}\cap V (C_{2})$  and $\Gamma_{2}$ combines $j^{'}$ with $j$. Furthermore, none of the vertices belonging to $V(C_{1})\cup V(C_{2})$ appears in each of $e_{i_{1}}\backslash
\{j\}$, $e_{i_{2}}$,$\ldots$,$e_{i_{r-1}}$, $e_{i_{r}}\backslash\{j^{'}\}$, $e_{i^{'}_{1}}\backslash\{j\}$, $e_{i^{'}_{2}}$,\ldots,$e_{i^{'}_{r-1}}$,$e_{i^{'}_{r^{'}}}\backslash\{j^{'}\}$.
\end{enumerate}
\end{Lemma}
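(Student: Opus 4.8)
The plan is to establish the two implications separately. \emph{Sufficiency}: every walk of type $(i)$, $(ii)$ or $(iii)$ is a primitive even closed walk. \emph{Necessity}: a primitive even closed walk must be of one of these three types. Throughout I will use the elementary fact that, since $\mathbb{K}[V]$ is a domain, a binomial $u-v\in I_G$ with $\gcd(u,v)=d$ yields the binomial $(u/d)-(v/d)\in I_G$; hence any primitive binomial $u-v$ satisfies $\gcd(u,v)=1$. Translated to a primitive even closed walk $\Gamma$, this says that every edge of $G$ occurs among the odd-indexed steps of $\Gamma$ or among the even-indexed steps, but never in both.

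For \emph{sufficiency} one writes $f_\Gamma=u-v$ explicitly in each of the three cases and checks two things: that $\gcd(u,v)=1$, and that no proper nonzero binomial $g=u'-v'\in I_G$ satisfies $u'\mid u$ and $v'\mid v$. The first is immediate from the combinatorial descriptions (in $(i)$ all edges of $\Gamma$ are distinct; in $(ii)$ and $(iii)$ the ``odd part'' and the ``even part'' of the edge multiset are disjoint by construction). For the second, a relation $\phi(u')=\phi(v')$ would correspond to a proper nonempty sub-collection of the steps of $\Gamma$ that is already balanced at every vertex of $G$; but in each of the types $(i)$--$(iii)$ removing any single step destroys the parity at its two endpoints, so no such sub-collection exists. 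This is routine once the three pictures are drawn.

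For \emph{necessity}, let $\Gamma$ be a primitive even closed walk. If every vertex of $\Gamma$ is visited exactly once, then $\Gamma$ is a cycle, and being even it is of type $(i)$. Otherwise choose a vertex $w$ visited at least twice and cut $\Gamma$ at $w$ into two shorter closed walks $\Gamma'$ and $\Gamma''$. If one of them were even, its associated binomial would be a proper sub-binomial of $f_\Gamma$, contradicting primitivity; hence $\Gamma'$ and $\Gamma''$ are both odd closed walks, each of which contains an odd cycle. Repeating this cutting argument and invoking primitivity at each stage, one shows that $\Gamma$ cannot split into more than two odd closed pieces and that each piece may be taken to be an odd cycle; the steps of $\Gamma$ not lying on these cycles then form walks connecting the two odd cycles $C_1$ and $C_2$ so obtained.

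It remains to analyze how $C_1$ and $C_2$ sit inside $\Gamma$. If $V(C_1)\cap V(C_2)\neq\emptyset$, then a second common vertex or a chord joining the two cycles would again yield a proper even sub-binomial, so the cycles share exactly one vertex and $\Gamma=(C_1,C_2)$, which is type $(ii)$. If $V(C_1)\cap V(C_2)=\emptyset$, the remaining steps of $\Gamma$ form two walks $\Gamma_1,\Gamma_2$ joining a vertex of $C_1$ to a vertex of $C_2$, and the absence of any proper sub-binomial forces exactly the non-revisiting conditions on $\Gamma_1$ and $\Gamma_2$ recorded in $(iii)$. The main obstacle is precisely this last bookkeeping: ruling out, purely from the non-existence of a proper sub-binomial, every way in which $\Gamma_1$ and $\Gamma_2$ might share an edge, revisit one of the cycles, or create an extra small even closed subwalk.
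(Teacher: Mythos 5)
This statement is quoted verbatim from \cite[Lemma~5.11]{HHO}; the paper supplies no proof of it, so there is no internal argument to compare yours against. Judged on its own, your write-up is an outline rather than a proof, and it defers exactly the points where the content of the lemma lives. In the ``necessity'' direction, the assertions that ``repeating this cutting argument \dots one shows that $\Gamma$ cannot split into more than two odd closed pieces and that each piece may be taken to be an odd cycle,'' together with your closing admission that ``the main obstacle is precisely this last bookkeeping,'' are the entire theorem. For instance, you must actually argue why a primitive $\Gamma$ cannot carry three disjoint odd cycles (two of them, joined through $\Gamma$, would assemble into a proper even closed subwalk whose binomial divides $f_\Gamma$ --- and even that divisibility has to be checked on each monomial separately, respecting multiplicities and the alternation of parities of the positions along $\Gamma$), why each odd piece can be pruned down to a genuine cycle without losing the divisibility contradiction, and why the leftover steps organize into exactly two walks $\Gamma_1,\Gamma_2$ meeting $V(C_1)\cup V(C_2)$ only at $j$ and $j'$. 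None of this is carried out.

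The ``sufficiency'' half is not even part of the statement --- the lemma asserts only that a primitive even closed walk has one of the three shapes, not the converse --- and the argument you give for it is invalid as written: to rule out a proper sub-binomial one must exclude \emph{every} balanced proper sub-multiset of the steps of $\Gamma$, not only those obtained by deleting a single step, so ``removing any single step destroys the parity at its two endpoints'' establishes nothing. Likewise, ``the odd part and the even part of the edge multiset are disjoint by construction'' in case $(iii)$ needs a computation: in the typical situation where $\Gamma_2$ retraces $\Gamma_1$, each path edge occurs twice in $\Gamma$, and one must check that the two occurrences sit at positions of the same parity (they do, precisely because $|C_1|$ and $|C_2|$ are odd), so that the edge appears squared in one monomial rather than once in each. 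Since the lemma is an external citation, the efficient course is simply to cite \cite{HHO}; if you intend to prove it, the cutting and divisibility analysis must be written out in full.
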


We would like to note  that in ($iii$) the sum of lengths of $\Gamma_1$ and $\Gamma_2$ must be even in order  to ensure it is indeed an even closed walk.

\subsection{Edge Cones and Canonical modules}

Let $G$ be a simple graph with vertex set $V(G)=\{1,\ldots,n\}$ and edge set $E(G)$. For any $f =\{i, j\} \in E(G)$ denote $v_{f}= \mathbf{e}_{i}+ \mathbf{e}_{j}$, where $\mathbf{e}_{i}$ is the $i$th unit vector of $\RR^{n}$.
The edge cone of $G$, denoted by $\RR_{+}(G)$, is defined to be the cone of $\RR^{n}$ generated by $\{v_{f}\mid  f \in E(G)\}$. In other words, $$\RR_{+}(G) = \{ \sum\limits_{f \in E(G)}{a_{f}v_{f}} \mid a_{f} \in \RR_{+} \mbox{ for all } f\in E(G)\}.$$
If $G$ satisfies the odd-cycle condition, then the edge ring $\mathbb{K}[G]$ is normal, see \cite{OH}, and particularly,  $\mathbb{K}[G]$ is Cohen-Macaulay, see \cite[Theorem 6.3.5]{BH}. It follows that the ideal of $\mathbb{K}[G]$ generated all the monomials $x^{\alpha}$ with  $\alpha \in \ZZ^n\cap \mbox{relint}(\RR_{+}(G))$ is the canonical module of $\mathbb{K}[G]$,
see e.g. \cite[section 6.3]{BH} for the details.

Let us describe the cone $\RR_{+}(G)$ in terms of linear inequalities.
For the description, we need to introduce some more notions on graphs.
\begin{itemize}
\item For a subset $W \subset V(G)$, let $G \setminus W$ be the subgraph induced on $V(G) \setminus W$.
If $W=\{k\}$, then we write $G \setminus k$ instead of $G \setminus \{k\}$.
\item For $j \in V(G)$, let $N_G(j)=\{i \in V(G)\mid \{i,j\} \in E(G)\}$, and for any subset $W \subset V(G)$, let $N_G(W)=\bigcup\limits_{k \in W}N_G(k)$.
\item A non-empty subset $T \subset V(G)$ is called an \textit{independent set} if $\{j,k\} \not\in E(G)$ for any $j,k \in T$.
\item We call a vertex $j$ of $G$ \textit{regular} if each connected component of $G \setminus j$ contains an odd cycle.
\item We say that an independent set $T$ of $V(G)$ is a \textit{fundamental} set if
\begin{itemize}
\item the bipartite graph on the vertex set $T \cup N_G(T)$ with the edge set $E(G)\cap \{\{j,k\}\mid j \in T, k \in N_G(T)\}$ is connected, and
\item either $T \cup N_G(T)=V(G)$ or each of the connected components of the graph $G \setminus (T \cup N_G(T))$ contains an odd cycle.
\end{itemize}
\end{itemize}

It follows from \cite[Theorem 3.2]{V1} or (\cite[Theorem 1.7 (a)]{OH}) that $\RR_{+}(G)$ consists of the elements $(x_1, \ldots, x_n) \in \RR^n$ satisfying all the following inequalities:
\begin{equation}\label{eq:ineq}\tag{$\Delta$}
\begin{split}
x_u &\geq 0 \;\;\text{ for any regular vertex }u; \\
\sum\limits_{v \in N_G(T)}x_v &\geq \sum\limits_{u \in T}x_u \;\;\text{ for any fundamental set }T.
\end{split}
\end{equation}

\subsection{E-K splitting} Based on the approach in \cite{EK}, Eliahou and Kervaire introduced the notion
of splitting a monomial ideal.
\begin{Definition}\label{DEKSP}
\em Let $I, J$ and $K$ be monomial ideals such that
$G(I)$, the unique set of minimal generators of $I$, is the disjoint union of $G(J)$ and $G(K)$. Then $I=J+K$ is an \textbf{Eliahou-Kervaire splitting} (abbreviated as ``E-K splitting'') if
there exists a splitting function \[ G(J \cap K) \to G(J) \times G(K) \] sending
$w \mapsto (\phi(w),\psi(w))$ such that

\begin{enumerate}
\item $w=\mathrm{lcm}(\phi(w),\psi(w))$ for all $w \in G(J \cap K)$, and
\item for every subset $\emptyset \neq S \subset G(J \cap K)$, $\mathrm{lcm}(\phi(S))$ and $\mathrm{lcm}(\psi(S))$
strictly divide $\mathrm{lcm}(S)$.
\end{enumerate}
\end{Definition}

\begin{Lemma} \cite[Proposition 3.1]{EK}
Let $I=J+K$ be an E-K splitting. Then, for all $i\geq0$,
\begin{equation*} \label{splitformula}
(*)~~\hspace{.5cm}
\beta_{i}(I) = \beta_{i}(J)+\beta_{i}(K)+\beta_{i-1}(J \cap K), \\
\beta_{i,j}(I) = \beta_{i,j}(J)+\beta_{i,j}(K)+\beta_{i-1,j}(J \cap K),
\end{equation*}
where $\beta_{-1,j}(J\cap K)=0$ for all $j$ by convention.
\end{Lemma}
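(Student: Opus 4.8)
The plan is to run the classical Mayer--Vietoris argument for monomial ideals and then to extract the combinatorial meaning of the splitting conditions. Since $G(I)=G(J)\sqcup G(K)$ we have $I=J+K$, and there is a short exact sequence of $\ZZ^n$-multigraded $R$-modules
\[
0\longrightarrow J\cap K\xrightarrow{\,w\,\mapsto\,(w,-w)\,}J\oplus K\xrightarrow{\,(u,v)\,\mapsto\,u+v\,}I\longrightarrow 0 .
\]
Applying $-\otimes_R\mathbb{K}$ and reading off the long exact sequence in $\Tor(-,\mathbb{K})$ yields, in each homological degree $i$ and each internal degree,
\[
\cdots\to\Tor_i(J\cap K)\xrightarrow{\ \alpha_i\ }\Tor_i(J)\oplus\Tor_i(K)\to\Tor_i(I)\to\Tor_{i-1}(J\cap K)\to\cdots .
\]
A dimension count along this sequence gives $\beta_i(I)=\beta_i(J)+\beta_i(K)+\beta_{i-1}(J\cap K)-\rank\alpha_i-\rank\alpha_{i-1}$, and the same degreewise; since all these ranks are nonnegative, the asserted identities (for all $i$, resp.\ for all $i$ and $j$) hold if and only if every $\alpha_i$ vanishes. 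As $\alpha_i$ is the pair of maps induced by the inclusions $J\cap K\hookrightarrow J$ and $J\cap K\hookrightarrow K$, it therefore suffices to prove that both induced maps $\Tor_i(J\cap K,\mathbb{K})\to\Tor_i(J,\mathbb{K})$ and $\Tor_i(J\cap K,\mathbb{K})\to\Tor_i(K,\mathbb{K})$ vanish for every $i$; by symmetry I only discuss the first.

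For this I would compare minimal $\ZZ^n$-graded free resolutions $\mathcal H_\bullet\to J\cap K$ and $\mathcal F_\bullet\to J$, lifting the inclusion to a chain map $\widetilde\iota\colon\mathcal H_\bullet\to\mathcal F_\bullet$; the induced map on $\Tor$ is $\widetilde\iota\otimes_R\mathbb{K}$, so it is enough to exhibit one lift whose image lies in $\mm\,\mathcal F_\bullet$. In homological degree $0$ this is already forced by the hypotheses: writing $\mathcal H_0$ for the free module on $\{e_w\}_{w\in G(J\cap K)}$ and $\mathcal F_0$ for the free module on $\{e_u\}_{u\in G(J)}$ (each basis vector placed in the multidegree of the corresponding monomial), condition $(1)$ gives $\phi(w)\mid w$ while condition $(2)$ applied to the singleton $S=\{w\}$ gives $\phi(w)\neq w$, so we may take $\widetilde\iota_0(e_w)=(w/\phi(w))\,e_{\phi(w)}\in\mm\,\mathcal F_0$. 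For $i\ge1$ the combinatorial input is: $\mathcal H_i$ is generated in multidegrees $\mathbf b$ with $\beta_{i,\mathbf b}(J\cap K)\neq0$, and any such $\mathbf b$ equals $\lcm(S)$ for $S=\{w\in G(J\cap K):w\mid\mathbf b\}$; rewriting each $w\in S$ as $\lcm(\phi(w),\psi(w))$ by $(1)$ gives $\mathbf b=\lcm\bigl(\lcm(\phi(S)),\lcm(\psi(S))\bigr)$, and $(2)$ makes each of $\lcm(\phi(S))$, $\lcm(\psi(S))$ a \emph{proper} divisor of $\mathbf b$. This is the fact that should force $\widetilde\iota$ to be a minimal chain map in every homological degree.

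The step I expect to be the main obstacle is precisely this last implication, since minimality of $\widetilde\iota_{i-1}$ does not by itself propagate to $\widetilde\iota_i$: the lift must be built carefully, invoking the strict divisibility above (i.e.\ condition $(2)$ for the subset of $G(J\cap K)$ supporting the generator being lifted) at each stage. An alternative route is to argue straight from the lcm-lattice description of multigraded Betti numbers that no multidegree $\mathbf b$ can have $\beta_{i,\mathbf b}(J\cap K)\neq0$ and $\beta_{i,\mathbf b}(J)\neq0$ at the same time — here one would have to parlay ``$\lcm(\phi(S))$ is strictly below $\mathbf b$ in the lcm lattice of $J$'' into the vanishing of the relevant reduced homology — whereupon $\widetilde\iota\otimes_R\mathbb{K}=0$ for want of overlapping multidegrees. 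The cleanest packaging, and the form I would ultimately write up, is to build the minimal free resolution of $I$ as the mapping cone of $\widetilde\iota\colon\mathcal H_\bullet\to\mathcal F_\bullet\oplus\mathcal G_\bullet$, with $\mathcal G_\bullet\to K$ minimal, which resolves $I$ by the short exact sequence above; one then checks that conditions $(1)$ and $(2)$, quantified over all nonempty $S\subseteq G(J\cap K)$, are exactly what makes this cone a \emph{minimal} resolution. The splitting formula is then the additivity of Betti numbers along a minimal mapping cone, and its graded and total forms follow by reading off internal degrees and by summing over $j$, respectively.
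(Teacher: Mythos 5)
The paper offers no proof of this lemma---it is quoted verbatim from \cite{EK}---so your attempt has to stand on its own, and as written it does not: the formal skeleton is correct but the one substantive step is missing. Your reduction via the short exact sequence $0 \to J\cap K \to J\oplus K \to I \to 0$, the rank count along the long exact $\mathrm{Tor}$ sequence, and the observation that the formula is equivalent to the vanishing of the maps $\mathrm{Tor}_i(J\cap K,\mathbb{K})\to \mathrm{Tor}_i(J,\mathbb{K})$ and $\mathrm{Tor}_i(J\cap K,\mathbb{K})\to \mathrm{Tor}_i(K,\mathbb{K})$ for every $i$ are all fine, and so is the degree-zero computation (condition (2) for singletons gives $w/\phi(w)\in\mathfrak{m}$, so $\alpha_0=0$). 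But for $i\ge 1$ you only assert that strict divisibility ``should force'' the comparison map to be minimal, and you yourself flag this as the main obstacle. That vanishing in positive homological degrees is the entire content of the proposition; everything else is formal homological algebra.

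Neither of your proposed repairs closes the gap as stated. The multidegree-disjointness route would require $\beta_{i,\mathbf{b}}(J\cap K)\neq 0\Rightarrow\beta_{i,\mathbf{b}}(J)=0$, but condition (2) only says that $\mathrm{lcm}(\phi(S))$ is a proper divisor of $\mathbf{b}$ for $S=\{w\in G(J\cap K): w\mid\mathbf{b}\}$; there may be minimal generators of $J$ dividing $\mathbf{b}$ that are not of the form $\phi(w)$, and together with $\phi(S)$ their lcm can still equal $\mathbf{b}$, so $\mathbf{b}$ can perfectly well support $\mathrm{Tor}_i(J,\mathbb{K})$. The mapping-cone packaging is the right framework, but ``one then checks that (1) and (2) make the cone minimal'' is exactly the statement to be proved, not a remark. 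What is missing is an explicit lift of the inclusion $J\cap K\hookrightarrow J$ to a chain map landing in $\mathfrak{m}$ times the target in \emph{all} degrees---for instance on Taylor complexes the candidate $e_S\mapsto\pm\bigl(\mathrm{lcm}(S)/\mathrm{lcm}(\phi(S))\bigr)e_{\phi(S)}$---together with a verification that it really is a chain map (nontrivial, since $\phi$ need not be injective on $S$, so $e_{\phi(S)}$ may sit in the wrong homological degree) and that condition (2), quantified over all nonempty subsets $S$, forces every coefficient into $\mathfrak{m}$. Until that lift is constructed and checked, you have a plausible outline of the argument in \cite{EK}, not a proof.
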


\section{A classification of compact graphs}

In this section, we aim to classify all the compact graphs up to the essentially same edge rings.
We start by presenting the following straightforward observation, which we will not provide a proof for.

\begin{Lemma}\label{remove} Let $x_1$ be a vertex of degree one in a simple graph $G$ and let $G'$ be the graph obtained from $G$ by removing $x_1$. Then $I_G$ and $I_{G'}$ have the same set of minimal binomial generators. More precisely, if $G$ has edge set $\{e_1,\ldots,e_r\}$ with $x_1\in e_r$, then $I_G=I_{G'}\cdot\mathbb{K}[e_1,\ldots,e_r]$ and $\mathbb{K}[G]\cong \mathbb{K}[G']\otimes_{\mathbb{K}}\mathbb{K}[e_r]$. Here,   both $\mathbb{K}[e_1,\ldots,e_r]$ and $\mathbb{K}[e_r]$ are polynomial rings by definitions.
\end{Lemma}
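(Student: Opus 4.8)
The plan is to use that $x_1$, having degree one, belongs to a unique edge $e_r=\{x_1,x_j\}$ (with $j\neq 1$), so that the variable $x_1$ occurs in exactly one of the monomial generators of $\mathbb{K}[G]\subset\mathbb{K}[x_1,\ldots,x_n]$, namely in $\phi(e_r)=x_{e_r}=x_1x_j$. Note also that removing $x_1$ destroys only the edge $e_r$, so $G'$ has edge set $\{e_1,\ldots,e_{r-1}\}$ and $I_{G'}$ is an ideal of $\mathbb{K}[e_1,\ldots,e_{r-1}]$, which I regard inside $\mathbb{K}[e_1,\ldots,e_r]$ in the obvious way. The inclusion $I_{G'}\cdot\mathbb{K}[e_1,\ldots,e_r]\subseteq I_G$ is then immediate: any polynomial $g$ in $e_1,\ldots,e_{r-1}$ lying in $I_{G'}$ satisfies $g(x_{e_1},\ldots,x_{e_{r-1}})=0$ already in $\mathbb{K}[x_2,\ldots,x_n]\subseteq\mathbb{K}[x_1,\ldots,x_n]$, hence $\phi(g)=0$ and $g\in I_G$.

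For the reverse inclusion --- the only step with any real content --- I would take $F\in I_G$ and expand it in powers of the last variable, $F=\sum_{k\geq 0}F_k(e_1,\ldots,e_{r-1})\,e_r^k$. Applying $\phi$ and using $x_{e_r}^k=x_1^k x_j^k$ gives $\sum_{k\geq 0}F_k(x_{e_1},\ldots,x_{e_{r-1}})\,x_j^k\,x_1^k=0$ in $\mathbb{K}[x_1,\ldots,x_n]$. Since $x_1$ divides none of $x_{e_1},\ldots,x_{e_{r-1}}$, this is an identity in the polynomial ring $\mathbb{K}[x_2,\ldots,x_n][x_1]$ in which $x_1^k$ has coefficient $F_k(x_{e_1},\ldots,x_{e_{r-1}})\,x_j^k$; comparing coefficients and using that $\mathbb{K}[x_2,\ldots,x_n]$ is a domain forces $F_k(x_{e_1},\ldots,x_{e_{r-1}})=0$, that is $F_k\in I_{G'}$, for every $k$. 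Hence $F\in I_{G'}\cdot\mathbb{K}[e_1,\ldots,e_r]$, and therefore $I_G=I_{G'}\cdot\mathbb{K}[e_1,\ldots,e_r]$.

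The remaining claims are then formal bookkeeping about base change along the inclusion $S:=\mathbb{K}[e_1,\ldots,e_{r-1}]\hookrightarrow S[e_r]=\mathbb{K}[e_1,\ldots,e_r]$. Freeness of $S[e_r]$ over $S$ gives $I_{G'}\cdot S[e_r]=\bigoplus_{k\geq0}I_{G'}e_r^k$, and if $\mathfrak{m}$ (resp. $\mathfrak{m}_S$) denotes the graded maximal ideal of $S[e_r]$ (resp. $S$), then $\mathfrak{m}\cdot\bigl(I_{G'}S[e_r]\bigr)=\bigl(\mathfrak{m}_S I_{G'}\bigr)\oplus I_{G'}e_r\oplus I_{G'}e_r^{2}\oplus\cdots$, so the degree-preserving isomorphism $I_{G'}S[e_r]\big/\mathfrak{m}\bigl(I_{G'}S[e_r]\bigr)\cong I_{G'}/\mathfrak{m}_S I_{G'}$ holds; consequently a set of binomials (in $e_1,\ldots,e_{r-1}$) is a minimal generating set of $I_{G'}$ if and only if it is a minimal generating set of $I_G$, which is the first assertion, and in particular no minimal generator of $I_G$ involves $e_r$. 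Finally,
$$\mathbb{K}[G]=S[e_r]/I_{G'}S[e_r]\cong\bigl(S/I_{G'}\bigr)\otimes_{\mathbb{K}}\mathbb{K}[e_r]\cong\mathbb{K}[G']\otimes_{\mathbb{K}}\mathbb{K}[e_r].$$
I expect the only genuinely substantive point to be the separation-of-variables argument in the second paragraph; everything else is routine once $I_G=I_{G'}\cdot\mathbb{K}[e_1,\ldots,e_r]$ is in hand.
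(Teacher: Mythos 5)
Your proof is correct. The paper explicitly declines to prove this lemma (it is presented as a "straightforward observation"), and your argument supplies exactly the expected details: the substantive step is, as you say, the separation-of-variables argument — since $x_1$ occurs in $x_{e_r}$ and in no other edge monomial, comparing coefficients of powers of $x_1$ in $\phi(F)=0$ forces each $e_r$-coefficient of $F$ to lie in $I_{G'}$ — and the graded Nakayama bookkeeping for the minimal generators and the tensor decomposition is handled correctly.
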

This observation indicates that the removal of vertices with a degree of one does not essentially alter the edge ring.  Given a simple graph $G$, by iteratively removing all vertices of degree one, we obtain a new graph, denoted as $G_0$,  where every remaining vertex has a degree greater than one. It is evident that $G$ and $G_0$ essentially share the same edge ring by Lemma~\ref{remove}.  From this point forward, we will solely focus on simple graphs in which every vertex has a degree greater than one.

\begin{Definition}\em Let $G$ be a connected simple graph where every vertex has a degree greater than one. We call $G$ to be a {\it compact} graph if it does not contain any even cycles and satisfies the odd-cycle condition.
\end{Definition}

To give a complete classification of compact graphs we need a series of lemmas, in which the graph $G$ is always a compact graph.

 \begin{Lemma} \label{2.3} Every cycle of $G$ is an induced cycle. \end{Lemma}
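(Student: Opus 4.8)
The plan is to argue by contradiction. Suppose $C$ is a cycle of $G$ that is \emph{not} induced, so there exist two non-consecutive vertices $a,b$ on $C$ with $\{a,b\}\in E(G)$. Writing $C$ as the concatenation of two paths $P_1$ and $P_2$ from $a$ to $b$ along the cycle (each of length at least $2$, since $a,b$ are non-consecutive and $C$ has length at least $3$... in fact at least one of them has length $\geq 2$, and if $C$ is a triangle then $a,b$ must already be consecutive, so $C$ has length $\geq 4$ and both paths have length $\geq 2$), the chord $e=\{a,b\}$ together with $P_1$ forms a closed walk $D_1$, and $e$ together with $P_2$ forms a closed walk $D_2$. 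Since $D_1$ and $D_2$ have no repeated vertices (the $P_i$ are subpaths of a cycle), they are honest cycles of $G$, and $\text{length}(D_1)+\text{length}(D_2) = \text{length}(C)+2$.

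The first key step is a parity observation: since $G$ is compact it has no even cycles, so $D_1$, $D_2$ and $C$ are all odd. But then $\text{length}(D_1)+\text{length}(D_2)$ is even while $\text{length}(C)+2$ is odd, which is already a contradiction. So the lemma follows purely from the absence of even cycles, without even invoking the odd-cycle condition. I would state the proof in exactly this order: (1) set up the chord and the two subpaths; (2) note both subpaths have length $\geq 2$ because a triangle is automatically induced; (3) form the two cycles $D_1,D_2$; (4) apply the length identity and the no-even-cycle hypothesis to get the parity contradiction.

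I do not anticipate a serious obstacle here; the only point requiring a little care is the claim that $D_1$ and $D_2$ are genuine \emph{cycles} (not just closed walks) — i.e.\ that they have no repeated vertices and length $\geq 3$. Both follow because the $P_i$ are internally disjoint subpaths of $C$ sharing only the endpoints $a,b$, and the added chord $\{a,b\}$ is a single new edge, so $D_i$ visits exactly the vertices of $P_i$, each once; and $\text{length}(D_i)=\text{length}(P_i)+1\geq 3$. It is also worth double-checking the degenerate case where $C$ has a chord but is a triangle: there non-consecutive vertices do not exist, so no chord can violate inducedness, consistent with the statement. Thus the whole argument is a short parity count once the cycles $D_1,D_2$ are correctly extracted.
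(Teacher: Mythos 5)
Your proof is correct and is essentially the same as the paper's: both split the non-induced cycle along a chord into two cycles whose lengths sum to $\mathrm{length}(C)+2$, and derive a parity contradiction from the absence of even cycles (the odd-cycle condition is indeed not needed). Your write-up is somewhat more careful about the degenerate triangle case and about verifying that the two pieces are genuine cycles, but the argument is the same.
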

\begin{proof}
Let $C$ be a cycle of $G$ that is not induced. We may label the vertices of $C$ as $v_1-v_2-\cdots-v_{2s+1}-v_1$ in such a way there exists an index $i \notin \{2, 2s+1\}$ for which $v_1$ is adjacent to $v_i$. Consequently, we obtain two distinct cycles: $v_1-v_2-\cdots-v_{i}-v_1$ and $v_{i}-v_{i+1}-\cdots-v_{2s+1}-v_1-v_{i}$. These cycles have lengths $i$ and $2s+3-i$ respectively, and by virtue of this, precisely one of them must be an even cycle.  This is impossible by our assumption.
\end{proof}

 \begin{Lemma} \label{2.4} For any distinct cycles $C_1$ and $C_2$ of $G$,  one has $|V(C_1)\cap V(C_2)|\leq 1$.\end{Lemma}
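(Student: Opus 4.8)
The plan is to argue by contradiction, combining Lemma~\ref{2.3} (every cycle is induced) with the odd-cycle condition and the absence of even cycles. Suppose $C_1$ and $C_2$ are distinct cycles with $\abs{V(C_1)\cap V(C_2)}\geq 2$. Since $G$ contains no even cycle, both $C_1$ and $C_2$ are odd. Pick two distinct common vertices $u,w\in V(C_1)\cap V(C_2)$. Each $C_i$ is split by $u$ and $w$ into two arcs (internally disjoint paths from $u$ to $w$); call them $P_i, Q_i$ for $C_i$, with $\ell(P_i)+\ell(Q_i)=\ell(C_i)$ odd, so exactly one of $P_i,Q_i$ has even length and the other odd length. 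Say $\ell(P_i)$ is odd and $\ell(Q_i)$ is even for $i=1,2$ (relabelling if necessary).

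The key step is to produce an even cycle or force $C_1=C_2$ by concatenating suitable arcs from $C_1$ and $C_2$. First I would handle the case where $C_1$ and $C_2$ share only the two vertices $u,w$ and no other vertex: then any arc of $C_1$ and any internally-disjoint arc of $C_2$ glue along $\{u,w\}$ to form a cycle in $G$, and choosing $P_1$ together with $Q_2$ gives a closed walk of length $\ell(P_1)+\ell(Q_2)=\text{odd}+\text{even}=\text{odd}$, while $P_1$ with $P_2$ gives length $\text{odd}+\text{odd}=\text{even}$. One must check these concatenations are genuine cycles (no repeated internal vertices), which holds here because the arcs are internally disjoint from each other and from $\{u,w\}$ except at the endpoints; this even cycle contradicts compactness. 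The general case, where $V(C_1)\cap V(C_2)$ may contain more than two vertices, reduces to this: order the common vertices as they appear along $C_1$, and take two of them, say $u$ and $w$, that are \emph{consecutive} among common vertices along one arc of $C_1$ — then the sub-arc of $C_1$ between them is internally disjoint from $C_2$, and I can again splice it with the appropriate sub-arc of $C_2$ between $u$ and $w$ to build a cycle whose parity I control by choosing which $C_2$-arc to use, again producing an even cycle.

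The main obstacle I anticipate is the bookkeeping of parities and the verification that the spliced closed walks are honest cycles (simple) rather than merely closed walks, since a non-simple even closed walk does not immediately violate the hypotheses — only even \emph{cycles} are excluded. The clean way around this is to always splice two internally disjoint arcs sharing exactly their two endpoints, which automatically yields a simple cycle; the reduction in the previous paragraph is designed precisely to guarantee such a configuration exists. A secondary point to be careful about: one must ensure the two arcs chosen are not identical (which would make the "cycle" degenerate), but distinctness of $C_1$ and $C_2$ together with the parity choice rules this out — if the only way to splice forced the same arc twice, then $C_1$ and $C_2$ would coincide. Once an even cycle is exhibited in every case, the contradiction with the definition of compact graph completes the proof; alternatively, one may phrase the final contradiction through Lemma~\ref{PECW}, observing that two odd cycles meeting in $\geq 2$ vertices cannot occur among primitive even closed walks, but the direct even-cycle construction is more self-contained.
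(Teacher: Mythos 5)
Your argument is correct and rests on the same engine as the paper's proof: between two common vertices of the odd cycles $C_1$ and $C_2$, splice an arc of one with an internally disjoint arc of the other, and use parity to force one of the two possible splices to be an even cycle; the paper merely organizes this as an incremental march along $C_1$ (first showing $v_2=u_2$, then $v_3=u_3$, and so on, until the cycles coincide) instead of your one-shot selection of a pair of common vertices that are consecutive along $C_1$. The one place your sketch is looser than it should be is the degenerate case: if the arc of $C_1$ joining two consecutive common vertices $u,w$ is a single edge, then by Lemma~\ref{2.3} that edge also lies on $C_2$, so the parity-correct splice collapses to that edge traversed twice --- and this can happen for an individual pair without $C_1=C_2$, so your remark that a forced degeneracy would make the cycles coincide is not true pair-by-pair. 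It is true globally, which is all you need: if \emph{every} consecutive pair of common vertices along $C_1$ is joined by an arc of length one, then all vertices of $C_1$ are common, every edge of $C_1$ lies on $C_2$ by inducedness, hence $E(C_1)\subseteq E(C_2)$ and $C_1=C_2$; otherwise some connecting arc has length at least two, its interior avoids $C_2$ entirely, and your even-cycle construction goes through verbatim.
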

\begin{proof}
On the contrary, let us assume the existence of two distinct cycles, $C_1: v_1-v_2-\cdots-v_{2s+1}-v_1$ and $C_2: u_1-u_2-\cdots-u_{2t+1}-u_1$, such that the intersection of their vertex sets has at least two elements. Without loss of generality, we may assume $u_1 = v_1$.

Now, define $i_1$ as the smallest integer $i \geq 2$ such that $v_i$ belongs to the vertex set of $C_2$. We assert that $i_1 = 2$. To see this, suppose $i_1 \geq 3$. Then, since $v_{i_1} = u_{j_1}$ for some $j_1\neq 1$, we can construct two cycles: $u_1 = v_1-v_2-\cdots-v_{i_1} = u_{j_1}-u_{j_1+1}-\cdots-u_{2t+1}-u_1$ and $u_1 = v_1-v_2-\cdots-v_{i_1} = u_{j_1}-u_{j_1-1}-\cdots-u_2-u_1$. However, this is impossible because exactly one of these cycles is even, contradicting our assumption. Therefore, $i_1 = 2$, which means $v_2$ is a vertex of $C_2$. Since $C_2$ is an induced cycle, $v_2$ must be either $u_2$ or $u_{2t+1}$. By relabeling the vertices of $C_2$ if necessary, we can assume $v_2 = u_2$.

If  $|V(C_1)\cap V(C_2)|=2$, i.e., $V(C_1)\cap V(C_2)=\{v_1,v_2\}$, then an even cycle can be constructed:
\[
u_1 = v_1 - v_{2s+1} - v_{2s} - \cdots - v_2 = u_2 - u_3 - \cdots - u_{2t+1} - u_1 = v_1.
\]
This contradicts our assumption, indicating that the  $|V(C_1)\cap V(C_2)|\geq 3$.

Next, let $i_2$ be the smallest integer $i\geq 3$ such that $v_i$ belongs to the vertex set of $C_2$. We assert that $i_2=3$. If $i_2$ is greater than 3, then two cycles are formed:
\[
u_2 = v_2 - v_3 - \cdots - v_{i_2} = u_{j_2} - u_{j_2+1} - \cdots - u_{2t+1} - u_2
\]
and
\[
u_2 = v_2 - v_3 - \cdots - v_{i_2} = u_{j_2} - u_{j_2-1} - \cdots - u_3 - u_2,
\]
where $j_2$ satisfies $v_{i_2} = u_{j_2}$. However, one of these cycles must be even, creating a contradiction.

Therefore, $i_2$ is indeed 3, implying that $v_3$ is a vertex of $C_2$. Since $C_2$ is an induced cycle, $v_3$ must be $u_3$.

Continuing this process, we ultimately arrive at the conclusion that $C_1$ and $C_2$ are identical, which is a contradiction.
  \end{proof}

\begin{Lemma}\label{2.5} Every vertex of $G$ belongs to at least one cycle.\end{Lemma}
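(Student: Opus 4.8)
The plan is to argue by contradiction: suppose some vertex $w$ of $G$ lies on no cycle of $G$, and derive a violation of the standing hypotheses (connectedness, minimum degree $\geq 2$, no even cycles, odd-cycle condition). First I would observe that since $w$ has degree $\geq 2$, it has at least two neighbors; if $w$ lay on a common cycle with any two of its neighbors we would be done, so the failure of $w$ to lie on a cycle forces $w$ to be a cut vertex whose removal disconnects $G$ into components, with the edges at $w$ distributed among distinct components. More precisely, I would consider the block-cut-vertex structure: an edge $e=\{w,u\}$ at $w$ lies in some block $B$ of $G$; since $G$ has minimum degree $\geq 2$ and is connected with more than one vertex, $B$ is either a single edge (a bridge) or a $2$-connected subgraph. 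If $B$ is $2$-connected and contains at least $3$ vertices, then $B$ contains a cycle through $e$, hence through $w$, contradiction. So \emph{every} block containing $w$ is a bridge.

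Next I would push this local observation outward. Let $e=\{w,u\}$ be a bridge at $w$, and let $H$ be the connected component of $G\setminus w$ containing $u$. Since $H$ has at least one vertex $u$ of degree $\geq 2$ in $G$ but degree possibly $1$ in $H$ (only along the bridge), and since $G$ is finite, $H$ must contain a cycle: indeed every finite graph in which all but finitely many "pendant directions" are closed off must contain a cycle, and more directly, since removing $w$ only lowers the degree of the neighbors of $w$ by one, every vertex of $H$ other than the neighbors of $w$ still has degree $\geq 2$ in $H$ — so $H$ is not a tree, hence contains a cycle $C_u$. Doing this for two different neighbors $u_1,u_2$ of $w$ lying in two different components $H_1,H_2$ of $G\setminus w$ (they must lie in different components, for if $u_1,u_2$ were in the same component then together with $w$ they would close a cycle through $w$), we obtain two cycles $C_{u_1}\subset H_1$ and $C_{u_2}\subset H_2$ with $V(C_{u_1})\cap V(C_{u_2})=\emptyset$ and with no edge joining them (any such edge would lie in $G\setminus w$, contradicting that $H_1,H_2$ are distinct components). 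This directly contradicts the odd-cycle condition. The remaining case is that all neighbors of $w$ lie in a single component of $G\setminus w$; but then, as noted, a path in that component between two neighbors of $w$ closes up with $w$ to a cycle through $w$, contradiction.

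The main obstacle I anticipate is the bookkeeping in the step that produces a cycle inside a component $H$ of $G\setminus w$: I must be careful that $H$ genuinely contains a cycle even though the neighbors of $w$ may have dropped to degree $1$ in $H$. The clean way to handle this is to note that if $H$ were a forest (in fact a tree, since it is connected) it would have a leaf, i.e. a vertex of degree $1$ in $H$; any such leaf must be a neighbor of $w$ in $G$ (else its degree in $G$ would also be $1$, contradicting minimum degree $\geq 2$); then that leaf together with $w$ and the rest of $H$ — or rather, using that a tree with $\geq 2$ vertices has $\geq 2$ leaves, hence at least two neighbors of $w$ in $H$ — lets me find a path in $H$ between two neighbors of $w$, which closes to a cycle through $w$; contradiction again. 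So in all cases we reach a contradiction, proving every vertex of $G$ lies on at least one cycle.
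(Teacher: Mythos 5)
Your proof is correct, and it reaches the same two pillars as the paper's argument (produce two cycles that share no vertex and are joined by no edge, then contradict the odd-cycle condition), but it gets there by a genuinely different mechanism. The paper walks outward from the cycle-free vertex $v$ along successive neighbours, using $\deg\geq 2$ at each step to build an explicit path $v-u-u_1-\cdots-u_s$ of cycle-free vertices terminating at a vertex on a cycle, does this for two neighbours of $v$, and then must argue (rather tersely, in fact) that the edge forced by the odd-cycle condition between the two resulting cycles would close up, via the two constructed paths, into a cycle through $v$. You instead analyse the components of $G\setminus w$: if two neighbours of $w$ lie in one component you get a cycle through $w$ at once, and otherwise each relevant component contains a cycle (your leaf argument --- a tree has at least two leaves, and any leaf of a component of $G\setminus w$ must be a neighbour of $w$, which again closes a cycle through $w$ --- is the right way to rule out the tree case; your earlier remark that non-neighbours of $w$ keep degree $\geq 2$ would not suffice on its own, but you correctly supersede it). The payoff of your route is that the final contradiction is immediate: cycles in distinct components of $G\setminus w$ can share no vertex and admit no connecting edge, so the odd-cycle condition fails with nothing left implicit. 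The block-decomposition preamble (every block at $w$ is a bridge) is correct but is never actually used afterwards and could be deleted.
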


 \begin{proof}

Assume on the contrary that there exists a vertex $v$ in $G$ that does not belong to any cycle. We first establish the claim that for any $u \in N_G(v)$, there exists a path $v-u-u_1-\cdots-u_s$ such that  $u_s$ belongs to at least one cycle in $G$, whereas each one of vertices $u,u_1,\ldots,u_{s-1}$ does not belong to any cycle in $G$.

Let $u \in N_G(v)$. If $u$  belongs to at least one cycle in $G$, we are done. (In this case, $s=0$). If  $u$ does not belong to any cycle in $G$, then, since $\deg(u) \geq 2$, there exists a vertex $u_1$ such that $u_1 \in N_G(u) \setminus \{v\}$. If $u_1$ belongs to a cycle, we are done. Otherwise, if $u_1$ does not belong to any cycle, we select $u_2 \in N_G(u_1) \setminus \{u\}$. If $u_2 = v$, then $v$ belongs to the cycle $v-u-u_1-u_2=v$, contradicting our initial assumption.

Therefore, $v-u-u_1-u_2$ is a path. If $u_2$ belongs to a cycle, we are done. If not, we continue this process, selecting $u_3 \in N_G(u_2) \setminus \{u_1\}$ and so on. Since $G$ is a finite graph, there exists an integer $s\geq 0$ such that $v-u-u_1-\cdots-u_s$ is a path satisfying the condition required, thereby proving our claim.

Now, since $\deg(v) \geq 2$, we can select distinct vertices $u$ and $w$ in $N_G(v)$. By applying the previous claim, there exist paths $v-u-u_1-\cdots-u_s$ and $v-w-w_1-\cdots-w_t$, where each vertex in  $\{v,u,w, u_1,\ldots,u_{s-1},w_1,\ldots,w_{t-1}\}$ does not belong to any cycle, but $u_s$ and $w_t$ belong to cycles $C_1$ and $C_2$ respectively. Clearly, $C_1$ and $C_2$ must be disjoint, as otherwise $v$ would belong to a cycle, contradicting our assumption.

However, this implies that there is an edge connecting $C_1$ and $C_2$, due to the odd-cycle condition in $G$. This is again a contradiction, and hence we conclude that every vertex in $G$ must belong to at least one cycle.
\end{proof}

 \begin{Lemma}\label{2.6}{\it If $v$ belong to the vertex set of  a cycle $C$ with $\deg(v)\geq 3$, then for any $u\in N_G(v)\setminus V(C)$ and for any cycle $C_1$ passing through $u$, one has either $V(C_1)\cap V(C)=\emptyset$ or  $V(C_1)\cap V(C)=\{v\}.$ } \end{Lemma}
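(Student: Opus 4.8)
The plan is to reduce to a single problematic configuration and then manufacture an even cycle from it, contradicting compactness. First I would observe that $C_1 \neq C$, since $u \in V(C_1)\setminus V(C)$; hence Lemma~\ref{2.4} applies and $|V(C_1)\cap V(C)| \le 1$. So $V(C_1)\cap V(C)$ is either empty, in which case we are done, or a single vertex $\{w\}$; and if $w = v$ we are again done. Thus the entire content of the lemma is to rule out the case $V(C_1)\cap V(C) = \{w\}$ with $w \neq v$. In this situation note that $v \notin V(C_1)$ (since $v \in V(C)$ and $v \neq w$), that $u \notin V(C)$ (by hypothesis), and hence $u \neq w$.

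The key construction is a cycle $D$ built from three pieces. In the cycle $C_1$ pick one of the two arcs joining $u$ to $w$, call it $P_1$; in the cycle $C$ pick one of the two arcs joining $w$ to $v$, call it $P_2$. The internal vertices of $P_1$ lie in $V(C_1)$ and meet $V(C)$ only at $w$, the internal vertices of $P_2$ lie in $V(C)$ and meet $V(C_1)$ only at $w$, and $u\notin V(C)$ while $v\notin V(C_1)$; consequently the concatenation of $P_1$ (from $u$ to $w$) with $P_2$ (from $w$ to $v$) is a path from $u$ to $v$ all of whose vertices are pairwise distinct. Adjoining the edge $\{v,u\}$ — which lies in neither $E(C_1)$ (as $v\notin V(C_1)$) nor $E(C)$ (as $u\notin V(C)$) — yields a genuine cycle $D$ of $G$, of length $1 + \ell(P_1) + \ell(P_2) \ge 3$.

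The parity count then closes the argument immediately. Since $G$ has no even cycles, both $C$ and $C_1$ are odd, so the two arcs of $C_1$ joining $u$ and $w$ have lengths of opposite parity, and likewise the two arcs of $C$ joining $w$ and $v$. Hence $P_1$ and $P_2$ can be chosen so that $\ell(P_1)+\ell(P_2)$ is odd, making $\ell(D) = 1 + \ell(P_1) + \ell(P_2)$ even, which contradicts compactness of $G$. The one step requiring genuine care is the verification that $D$ is an honest cycle — i.e.\ that the vertices contributed by the two arcs are distinct apart from $w$ and that $\{v,u\}$ is not already an edge of $C_1$ or $C$ — and this is precisely where the hypotheses $u\notin V(C)$, $v\notin V(C_1)$, and $|V(C_1)\cap V(C)|\le 1$ are all used; once $D$ is in hand the parity choice is routine.
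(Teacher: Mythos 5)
Your proof is correct and follows essentially the same route as the paper: reduce via Lemma~\ref{2.4} to the case $V(C_1)\cap V(C)=\{w\}$ with $w\neq v$, then concatenate an arc of $C_1$ from $u$ to $w$, an arc of $C$ from $w$ to $v$, and the edge $\{v,u\}$ into a cycle. The only difference is the closing step: the paper notes that this cycle meets $C$ in the two vertices $v$ and $w$ and invokes Lemma~\ref{2.4} once more, whereas you choose the arcs by parity to produce an even cycle and contradict compactness directly --- both finishes are valid, and your version makes explicit the distinctness checks that the paper leaves implicit.
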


\begin{proof}
If the intersection of the vertex sets $V(C_1)$ and $V(C)$ is neither empty nor equal to $\{v\}$, then by Lemma \ref{2.4}, it must be $\{w\}$ for some vertex $w$ distinct from $u$. Since $u$ and $w$ both belong to  $V(C_1)$, there exists a path $u-u_1-\cdots-u_s-w$, with  each vertex along this path belonging to $V(C_1)$. Similarly, as  $v$ and $w$ both belong to  $V(C)$, there exists a path $w-w_1-\cdots-w_t-v$, where each vertex in this path is an element of $V(C).$ Consequently, we can construct a cycle $u-u_1-\cdots-u_s-w-w_1-\cdots-w_t-v-u$ that necessarily intersects $C$ at at least two vertices: $v$ and $w$.  This is impossible by Lemma \ref{2.4}.
\end{proof}

We now study what happens if a cycle contain two vertices of degree at least 3. In the  proof of the following lemma, we use notation $u--_C--v$ to represent any  of two paths from $u$ to $v$ within the  cycle $C$ unless  otherwise specified.
  \begin{Lemma}\label{5} Let $v_1$ and $v_2$ be distinct vertices of degree at least 3 that belong to the vertex set of a  cycle $C$. Then $v_1$ is adjacent to $v_2$. Furthermore, for each $i=1,2$,  if  $C_i$ is  a cycle that passes through a vertex, say $u_i$,  in $N_G(v_i)\setminus V(C)$, then  $C_i$ must passes through $v_i$. Moreover, $V(C_1) \cap V(C_2) = \emptyset$.
  \end{Lemma}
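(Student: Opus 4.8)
The whole argument rests on the structural lemmas already proved for compact graphs, especially Lemma~\ref{2.4} (two distinct cycles share at most one vertex) and Lemma~\ref{2.6} (controlling how a cycle through a neighbour outside $C$ can meet $C$). I would organise the proof into three parts matching the three assertions.

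\emph{Step 1: $v_1$ is adjacent to $v_2$.} Since $v_1,v_2\in V(C)$, the cycle $C$ splits into the two arcs $v_1--_C--v_2$; write them as $P$ and $P'$. Since $\deg(v_i)\geq 3$, each $v_i$ has a neighbour $u_i\notin V(C)$. By Lemma~\ref{2.5}, $u_i$ lies on some cycle $C_i$; by Lemma~\ref{2.6} applied to the vertex $v_i$ of $C$, we have $V(C_i)\cap V(C)\in\{\emptyset,\{v_i\}\}$, and since we can take $C_i$ to pass through $v_i$ (see Step 2) or argue directly, $C_i$ avoids $V(C)\setminus\{v_i\}$. Now suppose $v_1$ is not adjacent to $v_2$; then both arcs $P,P'$ have length $\geq 2$, so each contains an internal vertex. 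The idea is to build an even cycle: concatenate the arc $P$ (from $v_1$ to $v_2$) with a detour from $v_2$ through $u_2$, around $C_2$ and back to $v_2$ — but since $C_2$ may only return to $C$ at $v_2$, this just reconstructs a cycle on $v_1,v_2$ together with the odd cycle $C_2$ at $v_2$, which is the primitive walk of type (ii), not directly a contradiction. So instead I would use the parity trick from Lemma~\ref{2.4}'s proof: $C$ is odd of length $2s+1$, so $|P|+|P'|=2s+1$, hence exactly one of $P,P'$ is even and one is odd. Replacing the odd arc, say $P$, by the path $v_1-u_1-\dots$(around $C_1$)$\dots-u_1-v_1$ is not available; rather, the cleanest route is: since $C_1$ is odd and meets $C$ only at $v_1$, the closed walk $P' \cup C_1$ (traverse $P'$ from $v_1$ to $v_2$ — wait, $C_1$ is at $v_1$) — let me instead form the closed walk consisting of $P$ together with $C$ traversed the other way, which is just $C$; that is circular. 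The genuine contradiction: consider the cycle $C' $ obtained from $C$ by replacing one vertex... I would in fact argue that if $v_1\not\sim v_2$, pick a shortest path $Q$ from $v_1$ to $v_2$ in $G\setminus(V(C)\setminus\{v_1,v_2\})$ using the edges to $u_1$ and beyond; combined with the two arcs of $C$ this yields two cycles whose lengths sum with $|P|+|P'|=2s+1$ to force an even cycle. The parity bookkeeping here is the main obstacle and must be done carefully, mirroring Lemmas~\ref{2.3}--\ref{2.4}.

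\emph{Step 2: $C_i$ passes through $v_i$.} Fix $i$ and let $C_i$ be a cycle through $u_i\in N_G(v_i)\setminus V(C)$. By Lemma~\ref{2.6}, $V(C_i)\cap V(C)$ is $\emptyset$ or $\{v_i\}$. Suppose $v_i\notin V(C_i)$, so $V(C_i)\cap V(C)=\emptyset$. Then walk from $v_i$ along the edge $\{v_i,u_i\}$ into $C_i$: this gives a closed walk $v_i - u_i - (\text{around } C_i) - u_i - v_i$ of even length, and attaching it to either arc of $C$ produces a closed even walk; more usefully, since $v_i\in V(C)$ and $v_i\notin V(C_i)$ but $\{v_i,u_i\}$ is an edge with $u_i\in V(C_i)$, the graph has the cycle $C$, the cycle $C_i$, and the connecting edge $\{v_i,u_i\}$, so they satisfy the odd-cycle condition trivially — no contradiction yet. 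So I instead go through $u_i$: there is a path inside $C_i$ of each parity from $u_i$ back to $u_i$? No — the honest argument is that $\{v_i,u_i\}$ together with the two $u_i$-to-$u_i$ arcs... rather, take the path from $v_i$ through $u_i$ around $C_i$ and back to $u_i$, then the edge back to $v_i$: this is a cycle through $v_i$ of length $|C_i|+... $. Actually the clean statement: $v_i \sim u_i$ and $u_i \in V(C_i)$ with $v_i \notin V(C_i)$ gives a cycle $C_i'$ through $v_i$ (namely $v_i$–$u_i$–one arc of $C_i$–other neighbour of $u_i$ in $C_i$... no, that needs another edge to $v_i$). The real point is that $v_i\notin V(C_i)$ is impossible because then $v_i$, being adjacent to $u_i\in V(C_i)$, would by Lemma~\ref{2.6} applied with the roles of $C,C_i$ reconsidered, or by the induced-cycle property Lemma~\ref{2.3} applied to the cycle formed by $\{v_i,u_i\}$ and a path in $C_i$... — I would finish this by noting that $u_i$ has two neighbours $a,b$ in $C_i$; then $v_i-u_i-a-(\text{arc of }C_i)-b-u_i$ is not simple. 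The truly correct finish: use that $v_i$ lies on some cycle (Lemma~\ref{2.5}) and combine with $C_i$ via Lemma~\ref{2.4} — actually $v_i$ lies on $C$, so the only issue is whether $C$ and $C_i$ force a bad configuration; I claim $v_i \notin V(C_i)$ contradicts Lemma~\ref{2.6} with $C$ replaced by $C_i$ and $v$ replaced by $u_i$: wait, $\deg(u_i)$ need not be $\geq 3$. I will therefore argue directly: the edge $\{v_i,u_i\}$ is a chord-like connection; walking $v_i\to u_i$, then around $C_i$, we build a cycle $D$ through $v_i$ and $u_i$; $D$ and $C$ share $v_i$, and $D$ and $C$ are distinct, fine; but $D$ and $C_i$ share all of $C_i$ minus possibly nothing, so if $D\neq C_i$ then $|V(D)\cap V(C_i)|\geq 2$ contradicting Lemma~\ref{2.4}, forcing $D=C_i$, hence $v_i\in V(C_i)$, done. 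This is the argument I would write out.

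\emph{Step 3: $V(C_1)\cap V(C_2)=\emptyset$.} By Steps 1--2, $v_1\sim v_2$, $v_1\in V(C_1)$, $v_2\in V(C_2)$, and by Lemma~\ref{2.6}, $V(C_1)\cap V(C)=\{v_1\}$ and $V(C_2)\cap V(C)=\{v_2\}$. Suppose $V(C_1)\cap V(C_2)\neq\emptyset$; by Lemma~\ref{2.4} it is a single vertex $\{w\}$, and $w\notin V(C)$ since $V(C_1)\cap V(C)=\{v_1\}\neq\{v_2\}=V(C_2)\cap V(C)$. Now take a path $R_1\subset C_1$ from $v_1$ to $w$ and a path $R_2\subset C_2$ from $w$ to $v_2$; together with the edge $\{v_2,v_1\}$ this is a cycle $D'$ meeting $C$ in $\{v_1,v_2\}$, i.e., $|V(D')\cap V(C)|\geq 2$, contradicting Lemma~\ref{2.4} (note $D'\neq C$ since $w\notin V(C)$). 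Hence $V(C_1)\cap V(C_2)=\emptyset$. This step is short and the cleanest of the three; the parity argument in Step~1 is where I expect the real work, and I would model it closely on the proofs of Lemmas~\ref{2.3} and~\ref{2.4}.
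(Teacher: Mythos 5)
There are genuine gaps in two of your three steps, and the order in which you attack the claims is part of the problem. The decisive failure is Step 2. Your final argument there builds ``a cycle $D$ through $v_i$'' by walking $v_i\to u_i$ and then around $C_i$ --- but the only edge you have from $v_i$ into $V(C_i)$ is $\{v_i,u_i\}$, so this walk must return through $u_i$ and is a closed walk revisiting $u_i$, not a cycle; the object $D$ does not exist, and the subsequent appeal to Lemma~\ref{2.4} is vacuous. No repair confined to $v_i$, $u_i$, $C_i$ and $C$ is possible, because the claim is false without the second big vertex: in the compact graph consisting of two triangles joined by an edge $\{u,v\}$ (the graph $B^0_{1:1}$), take $C$ to be the triangle at $u$ and $C_1$ the triangle at $v\in N_G(u)\setminus V(C)$; then $C_1$ does not pass through $u$. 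The conclusion of Lemma~\ref{5} holds only because $C$ carries \emph{two} vertices of degree at least $3$. The paper accordingly first proves $V(C_1)\cap V(C_2)=\emptyset$ directly (using the edges $\{v_i,u_i\}$, a path in $C$ from $v_1$ to $v_2$, and Lemmas~\ref{2.6} and~\ref{2.4}, with no appeal to adjacency), and only then proves that $C_1$ passes through $v_1$: if it did not, the odd-cycle condition would supply an edge $\{x_1,x_2\}$ between the disjoint cycles $C_1$ and $C_2$, and in every case one assembles a cycle meeting $C$ at both $v_1$ and $v_2$, contradicting Lemma~\ref{2.4}.

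Step 1 is not a proof either: you abandon several attempts and end with an unexecuted sketch (``the parity bookkeeping here is the main obstacle''), and the parity route is not the one that works. Adjacency should be proved \emph{last}: once disjointness and passing-through are established, the odd-cycle condition gives an edge $\{x_1,x_2\}$ with $x_i\in V(C_i)$, and if $v_1$ were not adjacent to $v_2$ then some $x_i\neq v_i$, from which one again builds a cycle through both $v_1$ and $v_2$ distinct from $C$, contradicting Lemma~\ref{2.4}. Your Step 3 is essentially sound as a conditional argument and is close in spirit to the paper's repeated use of Lemma~\ref{2.4}, but it is conditioned on Steps 1 and 2 and inverts the correct logical order: disjointness must come first, since the other two assertions depend on it. As written, Step 1 even forward-references Step 2, so the three steps do not assemble into a noncircular proof.
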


 \begin{proof}

 It is evident that $u_1 \neq u_2$, for otherwise, a cycle $v_1 --_C-- v_2 - u_2 = u_1- v_1$ would be formed.  This cycle would intersect $C$ at least at two vertices, $v_1$ and $v_2$, which is impossible according to Lemma~\ref{2.4}.

We next show $V(C_1) \cap V(C_2) = \emptyset$. If not, we may assume $V(C_1) \cap V(C_2)=\{x\}$.
There exists a least one path from $x$ to $u_1$ within $C_1$ that does does not pass through $v_1$, and we denote it by $x--_{C_1}--u_1$.  Additionally, let $u_2--_{C_2}--x$ denote a path within $C_2$ that does not pass through $v_2.$ Then,  since $V(C)\cap V(C_1)\subseteq \{v_1\}$, every vertex in the  path $x--_{C_1}--u_1$  does not belong to $V(C)$. Similarly, every vertex in the  path  $u_2--_{C_2}--x$  does not belong to $V(C)$.   If $x\notin \{u_1,u_2\}$,  then we obtain the  cycle: $x--_{C_1}--u_1-v_1--_C--v_2-u_2---_{C_2}-x.$  It's impossible because this cycle intersects with $C$ at least at two vertices, $v_1$ and $v_2$.   Either the case  $x=u_1$ or the $x=u_2$ leads to a similar contradiction. Hence, $V(C_1) \cap V(C_2) = \emptyset$.

Next, we demonstrate that $C_1$ necessarily passes through $v_1$. If not, then by Lemma \ref{2.6}, $V(C_1) \cap V(C) = \emptyset$. Since $V(C_1) \cap V(C_2) = \emptyset$, there exists an edge $\{x_1, x_2\}$ such that $x_i \in V(C_i)$ for $i = 1, 2$. Assuming $V(C_2) \cap V(C) \neq \emptyset$, it implies $V(C_2) \cap V(C) = \{v_2\}$.

Considering various possibilities, we have:

1. If $x_1 \neq u_1$ and $x_2 \neq v_2$, we arrive at the cycle: $x_1--_{C_1}-- u_1 - v_1 --_{C}-- v_2 --_{C_2}-- x_2 - x_1$.

2. If $x_1 = u_1$ and $x_2 \neq v_2$, the cycle is: $x_1 = u_1 - v_1 --_{C}-- v_2 --_{C_2}-- x_2 - x_1$.

3. If $x_1 = u_1$ and $x_2 = v_2$, the cycle becomes: $x_1 = u_1 - v_1 --_{C}-- v_2 = x_2- x_1$.

4. If $x_1 \neq u_1$ and $x_2 = v_2$, the cycle is: $x_1 --_{C_1}-- u_1- v_1--_{C} -- v_2 = x_2-- x_1$.

However, all these scenarios are impossible due to Lemma~\ref{2.4}. This proves $V(C_2) \cap V(C) = \emptyset$. However, we can prove this is also impossible analogously. This establishes that $C_1$ must pass through $v_1$. Similarly, we can also demonstrate that $C_2$ necessarily passes through $v_2$.

Finally, we show that $v_1$ is adjacent to $v_2$.  Suppose that $v_1$ is not adjacent to $v_2$. Since $V(C_1) \cap V(C_2) = \emptyset$, there is an edge $\{x_1,x_2\}$ such that $x_i\in V(C_i)$ for $i=1,2$. Note that  either $x_1\neq v_1$ or $x_2\neq v_2$. We may assume $x_1\neq v_1$. Then we obtain a cycle: either $x_1--_{C_1}--v_1--_{C}--v_2--_{C_1}--x_2-x_1$ if $x_2\neq v_2$, or $x_1--_{C_1}--v_1---v_2=x_2-x_1$ if $v_2=x_2$. This is again a contradiction.
\end{proof}

  A cycle is isolated if each of its vertices has a degree of $2$. For convenience, we say  a cycle of $G$ to be {\it almost-isolated}  if it has  exactly one vertex of degree $\geq 3$.

 \begin{Lemma} \label{2.8}If  $v_1$ is a vertex with $\deg(v_1)\geq 3$, then $v_1$  belongs to at least one almost-isolated cycle $C$.
 \end{Lemma}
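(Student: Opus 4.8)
The plan is to produce the desired cycle $C$ from a suitable ``tree branch'' of $G$ at $v_1$, using two structural facts about a compact graph $G$. First, by Lemma~\ref{2.4} no two distinct cycles of $G$ share more than one vertex, hence no two share an edge, so every block of $G$ is a single edge or a cycle; in particular a vertex of degree at least $2$ in some block, and of degree $\ge 3$ in $G$, must lie in at least two blocks and is therefore a cut vertex. Second, the odd-cycle condition forces cycles to cluster near $v_1$ in a very rigid way.

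So I would first note that $\deg(v_1)\ge 3$ makes $v_1$ a cut vertex, and hence $G\setminus v_1$ has at least two connected components. Next I claim that \emph{at most one} component of $G\setminus v_1$ contains a cycle: if two distinct components $A_1\ne A_2$ contained cycles $O_1\subseteq A_1$ and $O_2\subseteq A_2$, then $O_1$ and $O_2$ would be cycles of $G$ with no common vertex and with $v_1\notin V(O_1)\cup V(O_2)$, so the odd-cycle condition would provide an edge of $G$ joining a vertex of $O_1$ to a vertex of $O_2$; but such an edge joins a vertex of $A_1$ to a vertex of $A_2$ inside $G\setminus v_1$, which is impossible. Hence some component $A$ of $G\setminus v_1$ is acyclic, i.e.\ a tree. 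This extraction of an acyclic branch is, I expect, the only place where genuine thought is needed; everything after it is forced.

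Now I would pin down $A$. By Lemma~\ref{2.5} every vertex of $A$ lies on a cycle of $G$; since $A$ is acyclic that cycle must leave $A$, and since the only vertex outside $A$ with a neighbour in $A$ is $v_1$, the cycle passes through $v_1$; as a vertex occurs only once on a cycle, the cycle is then contained in $A\cup\{v_1\}$ and is obtained from a path in the tree $A$ by adding two edges at $v_1$. In particular $v_1$ has at least two neighbours in $A$. It has at most two: if $t_1,t_2,t_3$ were three of them and $P_{ij}$ denotes the unique path in $A$ between $t_i$ and $t_j$, then the cycle consisting of $P_{12}$ and the edges $v_1t_1,v_1t_2$ and the cycle consisting of $P_{13}$ and the edges $v_1t_1,v_1t_3$ would be two distinct cycles sharing the vertices $v_1$ and $t_1$, contradicting Lemma~\ref{2.4}. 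So $v_1$ has exactly two neighbours $t_1,t_2$ in $A$, and the cycle $C$ consisting of $P_{12}$ together with $v_1t_1$ and $v_1t_2$ is the only cycle of $G$ meeting $A$. Since every vertex of $A$ lies on a cycle of $G$ meeting $A$, every vertex of $A$ lies on $C$, so $V(A)=V(C)\setminus\{v_1\}$ and $A$ is exactly the path $P_{12}$.

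Finally I would check that $C$ is almost-isolated. Each $s\in V(A)$ has all of its neighbours in $A\cup\{v_1\}$; it is adjacent to $v_1$ only if $s\in\{t_1,t_2\}$ (these being the only neighbours of $v_1$ in $A$), and along the path $A=P_{12}$ it has one neighbour if $s\in\{t_1,t_2\}$ and two otherwise. Thus $t_1$ and $t_2$ have degree $2$ (one path-neighbour plus $v_1$) and every interior vertex of $P_{12}$ has degree $2$, so $v_1$ is the unique vertex of $C$ of degree $\ge 3$. Hence $C$ is an almost-isolated cycle through $v_1$, which is what we wanted.
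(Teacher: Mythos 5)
Your proof is correct, but it takes a genuinely different route from the paper. The paper stays inside its own chain of lemmas: it starts from an arbitrary cycle through $v_1$ (Lemma~\ref{2.5}), and if that cycle has a second big vertex $v_2$, it invokes Lemma~\ref{5} (via neighbours $u_i\in N_G(v_i)\setminus V(C)$ and cycles through them) to produce disjoint cycles $C_1\ni v_1$ and $C_2\ni v_2$, and then applies Lemma~\ref{5} a second time to rule out a second big vertex on $C_1$, so $C_1$ is the desired almost-isolated cycle. You instead argue globally: Lemma~\ref{2.4} makes $G$ a cactus (every block an edge or a cycle), so $\deg(v_1)\ge 3$ forces $v_1$ to lie in two blocks and hence be a cut vertex; the odd-cycle condition then shows at most one component of $G\setminus v_1$ contains a cycle, so some component $A$ is a tree, and Lemma~\ref{2.5} together with Lemma~\ref{2.4} forces $A$ to be a path whose two endpoints are the only neighbours of $v_1$ in $A$, closing up to an almost-isolated cycle. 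Your argument completely bypasses Lemmas~\ref{2.6} and~\ref{5}, and it delivers more than the statement asks: the cycle you build has all its non-$v_1$ vertices forming an entire component of $G\setminus v_1$, which already anticipates the bouquet structure proved later in Theorem~\ref{main1}. The price is that you import standard block-decomposition facts not developed in the paper (that ``no two cycles share two vertices'' forces every block to be an edge or a cycle, and that a vertex lying in two blocks is a cut vertex); these are classical and your one-line justifications are acceptable, but in the paper's self-contained style they would deserve either a citation or a short ear-decomposition argument. The remaining steps (uniqueness of the cycle meeting $A$, $A=P_{12}$, and the degree count showing $v_1$ is the only big vertex on $C$) are all carefully and correctly handled.
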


  \begin{proof}
By Lemma \ref{2.5} we may let $C$ be a cycle passing $v_1$. If $C$ is almost-isolated, our proof is complete. If $C$ is not almost-isolated, we let $v_2$ be a vertex in $V(C)$ other than $v_1$ with degree at least 3.

For each $i=1,2$, we let $u_i\in N_G(v_i)\setminus V(C)$ and let $C_i$ be a cycle passing through $u_i$. Then, according to Lemma~\ref{5}, we have $C_i$ passes $v_i$ for $i=1,2$ and $V(C_1)\cap V(C_2)=\emptyset$. Moreover, $v_1$ is adjacent to $v_2$.

It remains to show that $C_1$ is a almost-isolated cycle. Suppose on the contrary that there is a vertex $v_1\neq v_3\in V(C_1)$ with a degree at least $3$. Then $v_1$ and $v_3$ are vertices of degree at least $3$ that belong to $C_1$. Note that $C_2$ passes through the vertex $v_2$, which belongs to $N_G(v_1)\setminus V(C_1)$. It follows that $C_2$ passes through $v_1$ by Lemma~\ref{5}. However, this is contradicted to the fact that $V(C_1)\cap V(C_2)=\emptyset$, completing the proof.
\end{proof}

\begin{Lemma}  \label{2.9} If $v_1$ and $v_2$ are distinct vertices with a degree of at least 3, then $v_1$ is adjacent to $v_2$.\end{Lemma}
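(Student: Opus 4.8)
The plan is to attach to each of $v_1$ and $v_2$ an almost-isolated cycle via Lemma~\ref{2.8}, and then extract the desired edge from the odd-cycle condition, using Lemma~\ref{2.4} to control intersections. First I would invoke Lemma~\ref{2.8} to choose an almost-isolated cycle $C_1$ passing through $v_1$ and an almost-isolated cycle $C_2$ passing through $v_2$. Since $v_i$ has degree $\geq 3$ and $C_i$ has exactly one vertex of degree $\geq 3$, that vertex must be $v_i$; this observation is what I will use repeatedly.

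The second step is to show $V(C_1)\cap V(C_2)=\emptyset$. If $C_1=C_2$, then $v_1$ and $v_2$ are two distinct vertices of degree $\geq 3$ lying on a single almost-isolated cycle, which is absurd. If $C_1\neq C_2$ and they met, Lemma~\ref{2.4} would force $V(C_1)\cap V(C_2)=\{w\}$ for a single vertex $w$; but then $w$ has two neighbours along $C_1$ and two along $C_2$, and these four neighbours are pairwise distinct (a shared neighbour would be a second common vertex), so $\deg(w)\geq 4$. Being a vertex of degree $\geq 3$ on $C_1$ and on $C_2$, $w$ would equal both $v_1$ and $v_2$, contradicting $v_1\neq v_2$. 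Hence $C_1$ and $C_2$ are disjoint.

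The final step: since $C_1$ and $C_2$ are disjoint odd cycles (there are no even cycles in $G$), the odd-cycle condition provides an edge $\{x_1,x_2\}$ with $x_i\in V(C_i)$. As $x_2\notin V(C_1)$, if $x_1\neq v_1$ then $x_1$ would have at least three distinct neighbours — its two neighbours along $C_1$ together with $x_2$ — contradicting that $C_1$ is almost-isolated. Therefore $x_1=v_1$, and symmetrically $x_2=v_2$, so $\{v_1,v_2\}\in E(G)$, as claimed.

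\textbf{Main obstacle.} There is no deep difficulty here; the only point requiring care is the degree bookkeeping — making sure that the vertices playing the role of $w$ and of $x_i$ genuinely acquire degree $\geq 3$, i.e.\ that the ``extra'' neighbours coming from the second cycle or from the connecting edge are not already among the cycle neighbours. Disjointness of $C_1$ and $C_2$ (guaranteed by Step~2 via Lemma~\ref{2.4}) is exactly what makes this bookkeeping valid, so the two steps must be done in this order.
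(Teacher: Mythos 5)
Your proposal is correct and follows essentially the same route as the paper: invoke Lemma~\ref{2.8} to get almost-isolated cycles through $v_1$ and $v_2$, note they are disjoint, and use the odd-cycle condition plus the fact that all non-big vertices of an almost-isolated cycle have degree $2$ to force the connecting edge to be $\{v_1,v_2\}$. Your only addition is spelling out the disjointness of $C_1$ and $C_2$ (via Lemma~\ref{2.4} and the degree count at a hypothetical common vertex), which the paper dismisses as ``clear''; that verification is sound.
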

\begin{proof} By Lemma~\ref{2.8}, there exist  almost-isolated cycles $C_1$ and $C_2$ such that $v_i$ belongs to $C_i$ for $i = 1, 2$. It is clear that $V(C_1)\cap V(C_2)=\emptyset$. According to the odd-cycle condition, there exists an edge $\{u, w\}$ connecting a vertex $u$ from $V(C_1)$ to a vertex $w$ from $V(C_2)$. This implies both $u$ and $w$ have a degree of at least $3$.  Since all vertices of $C_1$ apart from $v_1$ have a degree of 2, it follows that $u$ must be $v_1$. By a similar argument, we conclude that $w$ is $v_2$. Hence, $v_1$ is adjacent to $v_2$.\end{proof}

\begin{Lemma} \label{2.10}  There are no more than three vertices with a degree of at least 3.\end{Lemma}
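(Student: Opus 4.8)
The plan is a one-step argument by contradiction resting entirely on Lemma~\ref{2.9}. I would suppose, for contradiction, that $G$ has at least four vertices of degree at least $3$, and pick four of them, say $v_1,v_2,v_3,v_4$. By Lemma~\ref{2.9}, any two distinct vertices of degree at least $3$ are adjacent, so all six pairs $\{v_i,v_j\}$ are edges of $G$; in other words, the subgraph of $G$ spanned by $\{v_1,v_2,v_3,v_4\}$ is a copy of $K_4$. In particular, $v_1-v_2-v_3-v_4-v_1$ is a cycle of $G$ of length $4$, i.e. an even cycle. Since a compact graph contains no even cycle, this is a contradiction, and so $G$ has at most three vertices of degree at least $3$.

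The only point that needs a little care is that the argument invokes the ``no even cycles'' hypothesis for this particular $4$-cycle, which need not be induced (indeed, under Lemma~\ref{2.9} the chords $\{v_1,v_3\}$ and $\{v_2,v_4\}$ are also present); but the definition of a compact graph forbids \emph{all} even cycles, not merely induced ones, so this causes no difficulty. I do not expect any genuine obstacle here: the substance has already been absorbed into Lemmas~\ref{2.8} and~\ref{2.9}, and the present statement is essentially a corollary of Lemma~\ref{2.9} together with the trivial fact that $K_4$ contains a $4$-cycle.

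Finally, I would record for later use that the bound $3$ is sharp and describe the extremal configuration: if $G$ has exactly three vertices of degree at least $3$, then by Lemma~\ref{2.9} they are pairwise adjacent and hence span a triangle of $G$; and one cannot lower the bound to $2$, precisely because a triangle is an \emph{odd} cycle, which a compact graph is permitted to contain.
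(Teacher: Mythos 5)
Your argument is correct and is essentially identical to the paper's proof: both invoke Lemma~\ref{2.9} to see that the vertices of degree at least $3$ span a complete graph, and then observe that four such vertices would yield a $4$-cycle, contradicting the absence of even cycles. Your added remark that the $4$-cycle need not be induced is a fair point of care, but causes no issue since the definition of a compact graph excludes all even cycles.
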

\begin{proof}
Let $v_1, \ldots, v_k$ be all the vertices with a degree of at least 3. By  Lemma \ref{2.9}, the induced subgraph on $\{v_1, \ldots, v_k\}$ forms a complete graph. However, if $k$ were to exceed 3 (i.e., $k \geq 4$), this induced subgraph would contain a 4-cycle, which is a contradiction.
\end{proof}

We are now ready to present a complete classification for compact graphs.

\begin{Definition}\em
   A vertex in a compact graph is referred as a {\it big} vertex if its degree exceeds 2. A compact graph is classified as type $i$ if it possesses exactly $i$ big vertices, where $i$ ranges from 0 to 3.

\end{Definition}

\begin{Theorem}\label{main1}
Every compact graph falls into type $i$ for some $i\in \{0,1,2,3\}$.  Furthermore, the four distinct categories of compact graphs can be characterized as follows:

\begin{enumerate}
\item A compact graph of type 0 is simply an odd cycle.

\item A compact graph of type 1 is a finite collection of odd cycles that share a common vertex.

\item A compact graph of type 2 consists of two disjoint compact graphs of type 1, where the two big vertices are connected either by an edge or by an edge as well as a path of even length.

  \item   A compact graph of type 3 consists of three disjoint compact graphs of type 1, where every pair of big vertices is connected by an edge.

\end{enumerate}
\end{Theorem}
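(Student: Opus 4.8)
The plan is to leverage the structural lemmas accumulated so far (Lemmas~\ref{2.3}--\ref{2.10}) to pin down the shape of a compact graph according to its number of big vertices. First I would record that the number of big vertices lies in $\{0,1,2,3\}$: this is immediate from Lemma~\ref{2.10}. The argument then splits into four cases.

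For type $0$ (no big vertex), every vertex has degree exactly $2$, so $G$ is a disjoint union of cycles; since $G$ is connected it is a single cycle, and since $G$ has no even cycle it is an odd cycle. For type $1$, let $v$ be the unique big vertex. By Lemma~\ref{2.8}, $v$ lies on an almost-isolated cycle, and in fact every cycle through $v$ is almost-isolated (a second big vertex on such a cycle is excluded by uniqueness). Every vertex lies on a cycle by Lemma~\ref{2.5}, and by Lemma~\ref{2.4} any two distinct cycles meet in at most one vertex; I would argue that all cycles must pass through $v$ — a cycle avoiding $v$ would consist entirely of degree-$2$ vertices and hence be a connected component, contradicting connectedness unless it is all of $G$, which is type $0$. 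Thus $G$ is a bouquet of cycles glued at $v$, each of which is odd because $G$ has no even cycle; this is precisely a finite collection of odd cycles sharing the common vertex $v$.

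For type $2$, let $v_1,v_2$ be the two big vertices. By Lemma~\ref{2.9} they are adjacent. By Lemma~\ref{2.8} each $v_i$ lies on an almost-isolated cycle $C_i$, and $V(C_1)\cap V(C_2)=\emptyset$. The ``local'' picture at each $v_i$ is, as in the type-$1$ analysis, a bouquet of odd cycles through $v_i$ (all vertices other than $v_1,v_2$ have degree $2$, so any cycle through $v_i$ that is not one of these bouquet cycles must use the edge $v_1v_2$). I would then examine what connects the two bouquets: besides the edge $v_1v_2$, the remaining vertices of $G$ form paths; a path $P$ from $v_1$ to $v_2$ internally disjoint from everything creates, together with the edge $v_1v_2$, a cycle of length $\mathrm{length}(P)+1$, which must be odd, forcing $\mathrm{length}(P)$ even. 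Two such paths, or a path together with extra structure, would create an even cycle or violate the odd-cycle condition — here I expect to need Lemma~\ref{2.4} and a careful parity count to conclude there is at most one such path. This yields exactly the description in (3): two disjoint type-$1$ graphs whose big vertices are joined by an edge, and possibly also by a single even-length path.

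For type $3$, with big vertices $v_1,v_2,v_3$, Lemma~\ref{2.9} gives that all three pairs are adjacent, so $\{v_1,v_2,v_3\}$ spans a triangle; Lemma~\ref{2.8} again produces pairwise-disjoint almost-isolated cycles $C_i\ni v_i$. As before, each $v_i$ carries a bouquet of odd cycles. The key extra point is that \emph{no} connecting path other than the three triangle edges can exist: a path between, say, $v_1$ and $v_2$ of length $\geq 2$ would combine with the two triangle edges $v_1v_3,v_3v_2$ to give a cycle of length $\mathrm{length}(P)+2$, and also with the single edge $v_1v_2$ to give a cycle of length $\mathrm{length}(P)+1$; these two cycles have opposite parities, so one is even — contradiction. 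Hence $G$ is exactly the triangle on $v_1,v_2,v_3$ with an odd-cycle bouquet attached at each vertex, which is the assertion in (4). The main obstacle throughout is the bookkeeping in types $2$ and $3$: verifying that all ``extra'' edges and vertices are forced into paths of the prescribed parity, and ruling out multiple connecting paths, which requires repeated use of the no-even-cycle hypothesis together with Lemmas~\ref{2.4} and~\ref{5}.
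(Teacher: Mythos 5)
Your proposal is correct and follows essentially the same route as the paper: Lemma~\ref{2.10} bounds the number of big vertices by three, and each of the four cases is then settled by combining Lemmas~\ref{2.3}--\ref{2.9} with the no-even-cycle hypothesis to show that every vertex lies on an almost-isolated (odd) cycle through a big vertex or on a single connecting path. The only cosmetic difference is in types $2$ and $3$, where the paper notes that any cycle which is not almost-isolated must contain at least two big vertices and then invokes Lemma~\ref{2.4} to force it to be the unique induced cycle $u-x_1-\cdots-x_{2k-1}-v-u$ (resp.\ the triangle), whereas you reach the same conclusion via a parity count on the cycles $P\cup\{v_1v_2\}$ and $P\cup\{v_1v_3,v_3v_2\}$; either argument closes the step you flag with ``I expect to need Lemma~\ref{2.4} and a careful parity count.''
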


\begin{proof}  The first statement follows from Lemma~\ref{2.10}.

The proofs of (1) and (2) are straightforward.

(3) Let $G$ be a compact graph of type 2 and let $u$ and $v$ denote its big vertices.  Then $u$ is adjacent to $v$ by Lemma~\ref{2.9}. Consider all the almost-isolated cycles passing through $u$ and   all the almost-isolated cycles passing through $v$. Let $V_1$ denote the set of  all the vertices of these cycles. If $V_1=V(G)$, then $G$ consists of two disjoint compact graphs of type 1 where the  two big vertices forms an edge.

 Suppose that $V_1 \neq V(G)$. Then we define $V_2 = V(G) \setminus V_1$.  Fix an arbitrary vertex $x \in V_2$. By Lemma~\ref{2.5}, there exists a cycle $C$ in $G$ such that $x$ belongs to the vertex set of $C$. Since $C$ is neither isolated nor almost-isolated, it is straightforward to deduce that $u$ and $v$ must also belong to the vertex set of $C$. By Lemma~\ref{2.3}, $C$ is an induced cycle, meaning that no two non-consecutive vertices of $C$ are adjacent in $G$. Hence, $C$ may be written as $u-x_1-\cdots-x_{2k-1}-v-u$, where $k \geq 1$ and $x_1, \ldots, x_{2k-1}$ are distinct vertices in $V_2$.

Now, consider any other vertex $y \in V_2$. Then $y$ also belongs to the vertex set of a cycle that contains both $u$ and $v$. By Lemma~\ref{2.4}, this cycle  must be $C$. Consequently, $y\in \{x_1,\ldots,x_{2k-1}\}$  and it follows that $V_2=\{x_1, \ldots, x_{2k-1}\}$. Therefore, $G$ consists of two disjoint compact graphs of type 1, where the two big vertices $u$ and $v$ are connected not only by an edge but also by a path given by $u-x_1-\cdots-x_{2k-1}-v$.

(3) Let $G$ be a compact graph of type 3, with $u,v$ and $w$ denoting its big vertices. It suffices to show that the set of vertices of almost-isolated cycles must be $V(G)$. Suppose on the contrary that there exists  a vertex $x$ that does not belong to the vertex set of any almost-isolated cycle of $G$. Then $x$ must be a vertex of a cycle that contains at least two big vertices. However, the only such cycle in $G$ is  $u-v-w-u$. It follows that $x$ must be one of $u,v,w$, a contradiction.
  \end{proof}

We introduce some more notation for the later use.
Suppose $\underline{p}=(p_1,\ldots,p_m)$, $\underline{q}=(q_1,\ldots,q_n)$  and $\underline{r}=(r_1,\ldots,r_k)$ are positive integral vectors with dimensions $m,n$ and $k$ respectively.
We denote a compact graph of type 1, where the odd cycles have lengths $2p_1+1,\ldots 2p_m+1$ respectively, as $A_{\underline{p}}$ or $A_{p_1,\ldots,p_m}$.
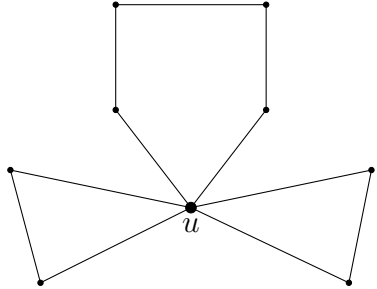
\begin{figure}[ht]
\centering
\begin{tikzpicture}
\draw[black, thin] (3,1) -- (5,2) -- (2.6,2.5)-- cycle;
\draw[black, thin] (5,2) -- (4,3.3) -- (4,4.7) -- (6,4.7)--(6,3.3)-- cycle;
\draw[black, thin] (5,2) -- (7.4,2.5) -- (7.1,1)-- cycle;
\filldraw [black] (3,1) circle (1pt);
\filldraw [black] (5,2) circle (2pt);
\filldraw [black] (2.6,2.5) circle (1pt);
\filldraw [black] (4,3.3) circle (1pt);
\filldraw [black] (6,3.3) circle (1pt);
\filldraw [black] (4,4.7) circle (1pt);
\filldraw [black] (6,4.7) circle (1pt);
\filldraw [black] (7.4,2.5) circle (1pt);
\filldraw [black] (7.1,1) circle (1pt);
\draw (5,2) node[anchor=north]{$u$};
\end{tikzpicture}
\caption{The graph $A_{(1,2,1)}$}
\label{fig1}
\end{figure}

By $B_{\underline{p}:\underline{q}}^0$  we mean  a
compact graph of type 2 where the two disjoint compact graphs of type 1 that compose it are $A_{\underline{p}}$ and $A_{\underline{q}}$ and where the two big vertices are connected by an edge. Furthermore, if $s>0$ is an even number, then  $B_{\underline{p};\underline{q}}^s$ represents the graph obtained by appending a path of length $s$ connecting two big vertices to $B_{\underline{p}:\underline{q}}^0$.

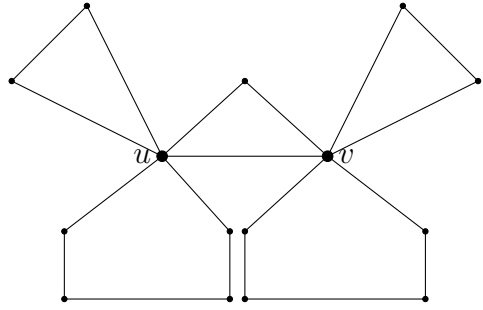
\begin{figure}[ht]
\centering
\begin{tikzpicture}
\draw[black, thin] (3.7,1) -- (5,2) -- (5.9,1)-- (5.9,0.1)--(3.7,0.1)-- cycle;
\draw[black, thin] (5,2) -- (3,3) -- (4,4)-- cycle;
\draw[black, thin] (5,2) -- (6.1,3) -- (7.2,2)-- cycle;
\draw[black, thin] (7.2,2) -- (9.2,3) -- (8.2,4)-- cycle;
\draw[black, thin] (8.5,1) -- (7.2,2) -- (6.1,1)-- (6.1,0.1)--(8.5,0.1)-- cycle;
\filldraw [black] (5,2) circle (2pt);
\filldraw [black] (7.2,2) circle (2pt);
\filldraw [black] (9.2,3) circle (1pt);
\filldraw [black] (8.2,4) circle (1pt);
\filldraw [black] (8.5,1) circle (1pt);
\filldraw [black] (6.1,1) circle (1pt);
\filldraw [black] (6.1,0.1) circle (1pt);
\filldraw [black] (8.5,0.1) circle (1pt);
\filldraw [black] (3.7,1) circle (1pt);
\filldraw [black] (5.9,1) circle (1pt);
\filldraw [black] (5.9,0.1) circle (1pt);
\filldraw [black] (3.7,0.1) circle (1pt);
\filldraw [black] (3,3) circle (1pt);
\filldraw [black] (4,4) circle (1pt);
\filldraw [black] (6.1,3) circle (1pt);
\draw (5,2) node[anchor=east]{$u$};
\draw (7.2,2) node[anchor=west]{$v$};
\end{tikzpicture}
\caption{The graph $B^{2}_{(2,1):(2,1)}$}
\label{fig2}
\end{figure}

 A compact graph of type 3  is denoted by $C_{\underline{p}:\underline{q}:\underline{r}}$ if the three disjoint compact graphs of type 1 that make up it are $A_{\underline{p}}$, $A_{\underline{q}}$ and $A_{\underline{r}}$ respectively.

\begin{figure}[ht]
\centering
\begin{tikzpicture}
\draw[black, thin] (3.7,1) -- (5,2) -- (5.8,1)-- (5.8,0.1)--(3.7,0.1)-- cycle;
\draw[black, thin] (5,2) -- (3,2.2) -- (3.3,3.2)-- cycle;
\draw[black, thin] (5,2) -- (6.1,4) -- (7.2,2)-- cycle;
\draw[black, thin] (7.2,2) -- (9.2,2.2) -- (8.9,3.2)-- cycle;
\draw[black, thin] (8.5,1) -- (7.2,2) -- (6.2,1)-- (6.2,0.1)--(8.5,0.1)-- cycle;
\draw[black, thin] (6.1,4) -- (4,5.5) -- (3.5,4.5)-- cycle;
\draw[black, thin] (6.1,4) -- (8.2,5.5) -- (8.7,4.5)-- cycle;
\filldraw [black] (5,2) circle (2pt);
\filldraw [black] (7.2,2) circle (2pt);
\filldraw [black] (6.1,4) circle (2pt);
\filldraw [black] (8.2,5.5) circle (1pt);
\filldraw [black] (8.7,4.5) circle (1pt);
\filldraw [black] (6.2,0.1) circle (1pt);
\filldraw [black] (8.5,0.1) circle (1pt);
\filldraw [black] (8.5,1) circle (1pt);
\filldraw [black] (4,5.5) circle (1pt);
\filldraw [black] (3.5,4.5) circle (1pt);
\filldraw [black] (3.7,1) circle (1pt);
\filldraw [black] (5.8,1) circle (1pt);
\filldraw [black] (5.8,0.1) circle (1pt);
\filldraw [black] (3.7,0.1) circle (1pt);
\filldraw [black] (3,2.2) circle (1pt);
\filldraw [black] (3.3,3.2) circle (1pt);
\filldraw [black] (9.2,2.2) circle (1pt);
\filldraw [black] (8.9,3.2) circle (1pt);
\filldraw [black] (8.5,1) circle (1pt);
\filldraw [black] (7.2,2) circle (1pt);
\filldraw [black] (6.2,1) circle (1pt);
\draw (5,2) node[anchor=south]{$u$};
\draw (7.2,2) node[anchor=south]{$v$};
\draw (6.1,4) node[anchor=south]{$w$};
\end{tikzpicture}
\caption{The graph $C_{(2,1):(1,1):(2,1)}$}
\label{fig3}

\end{figure}
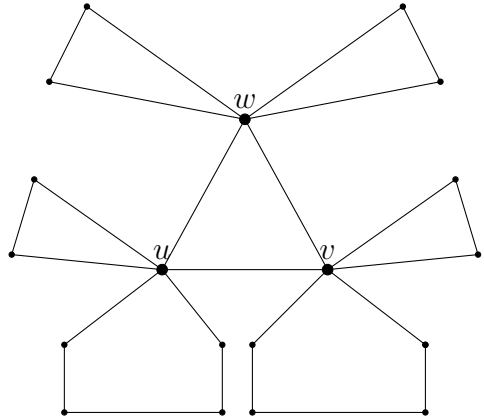

\section{Universal gr\"{o}bner bases and initial ideals}

In this section, we will discuss the universal Gr\"{o}bner  bases and initial ideals of toric ideals of compact graphs, with respect to a specific monomial order. The main objective of this section is to identify suitable monomial orders that yield initial ideals with a favorable E-K splitting, as demonstrated in the subsequent section.

 \subsection{Compact graphs of type 1} \label{3.1} Given positive integers $m \geq 2$ and $p_{1}, \ldots, p_{m} $, we use $A$ to denote
the graph $A_{p_{1}, \ldots, p_{m}}$ for short. Thus $A$ has vertex set
\[
V(A)=\{u\} \cup \{u_{i,j}\mid 1 \leq i \leq m, 1 \leq j \leq 2p_{i}\}
\]
and edge set
\begin{align*}
E(A)=\{& \{u_{i,j},u_{i,j+1}\} \mid 1 \leq i \leq m, 1 \leq j \leq 2p_{i}-1 \}\\
&\cup \{ \{u,u_{i,1}\}, \{u,u_{i,2p_{i}}\} \mid  1 \leq i \leq m \}.
\end{align*}
We label the edges of $A$ as follows. For $i\in{\{1,\ldots,m\}}$, we let $e_{i,1}=\{u,u_{i,1}\}$ and $e_{i,2p_{i}+1}=\{u,u_{i,2p_{i}}\}$.  For $i\in{\{1,\ldots,m\}}$ and $j\in{\{1,\ldots,2p_{i}-1\}}$ let $e_{i,j+1}=\{u_{i,j},u_{i,j+1}\}$. For $1\leq i,j\leq m$, we put $$e_i'=e_{i,1}e_{i,3}\cdots e_{i,2p_i+1} \mbox{ and } e_j''=e_{j,2}e_{j,4}\cdots e_{j,2p_j}.$$

 \begin{Lemma}\label{AUG}
For any integers $m\ge 2$ and positive integers $p_{1}, \ldots, p_{m} $, the universal Gr\"obner basis  for  the toric ideal $I_{A}$   is given by  $$\mathcal G=\{e'_{i}e''_{j}-e''_{i}e'_{j}\ |\ 1 \leq i < j \leq m\}.$$
\end{Lemma}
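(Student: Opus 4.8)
The plan is to show that $\mathcal G$ is contained in $I_A$, that it generates $I_A$, and that it is precisely the set of primitive binomials coming from the primitive even closed walks of $A$, so that the universal Gröbner basis description of Subsection 1.2 applies. Since $A$ is a compact graph of type $1$, Theorem~\ref{main1}(2) and Lemma~\ref{2.4} tell us that its cycles are exactly the $m$ odd cycles $C_i$ through $u$ (of length $2p_i+1$), and any two of them meet only in $u$; in particular $A$ has no even cycles and no two vertex-disjoint cycles, so in Lemma~\ref{PECW} only case $(ii)$ can occur. Thus every primitive even closed walk of $A$ is of the form $(C_i,C_j)$ with $i<j$, and its associated binomial is exactly $e'_ie''_j - e''_ie'_j$ after reading off which edges occur in odd and even position when one traverses $C_i$ and then $C_j$ (starting and ending at $u$): traversing $C_i$ uses $e_{i,1},e_{i,2},\dots,e_{i,2p_i+1}$ in that order, contributing $e_{i,1}e_{i,3}\cdots e_{i,2p_i+1}=e'_i$ to the odd-position product and $e_{i,2}e_{i,4}\cdots e_{i,2p_i}=e''_i$ to the even-position product, and then traversing $C_j$ continues the alternation, so the odd-position product is $e'_ie''_j$ and the even-position product is $e''_ie'_j$. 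This identifies $f_{(C_i,C_j)} = e'_ie''_j-e''_ie'_j$ up to sign.

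First I would verify $e'_ie''_j-e''_ie'_j\in I_A$ directly: applying $\phi$, $e'_i$ maps to $\prod_{x\in V(C_i)}x$ divided appropriately — more concretely, $\phi(e'_i)=\phi(e_{i,1})\phi(e_{i,3})\cdots\phi(e_{i,2p_i+1})$ and $\phi(e''_i)=\phi(e_{i,2})\cdots\phi(e_{i,2p_i})$, and since $e_{i,1},\dots,e_{i,2p_i+1}$ is an odd closed walk at $u$, the product $\phi(e'_i)$ equals $u^2\prod_{j=1}^{2p_i}u_{i,j}$ while $\phi(e''_i)=\prod_{j=1}^{2p_i}u_{i,j}$; hence $\phi(e'_i)=u^2\phi(e''_i)$, and therefore $\phi(e'_ie''_j)=u^2\phi(e''_i)\phi(e''_j)=\phi(e''_ie'_j)$, so $e'_ie''_j-e''_ie'_j\in I_A$. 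Each such binomial is primitive: any proper divisor relation would force a binomial supported on fewer edges, which (again by Lemma~\ref{PECW} applied to $A$, whose only even closed walks are concatenations of the $C_i$'s) cannot lie in $I_A$.

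The remaining point — and the step I expect to require the most care — is that $\mathcal G$ already generates $I_A$, equivalently that every primitive even closed walk of $A$ is a single $(C_i,C_j)$. Here I would argue that any closed walk $W$ in $A$ decomposes into passages through the cycles $C_i$ and back-and-forth excursions along tree-like parts, but $A$ has no tree-like parts (every vertex lies on a cycle by Lemma~\ref{2.5}, in fact on a unique $C_i$ unless it is $u$); a primitive walk cannot repeat an edge in the same direction nor backtrack, so it can only consist of full traversals of some of the $C_i$. If it traverses three or more, the associated binomial is divisible by the binomial of a sub-walk using just two of them, contradicting primitivity; if it traverses just one $C_i$, that is an odd cycle, not even. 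Hence exactly two, giving $\mathcal G$. Finally, by the cited fact (\cite[Proposition 10.1.10]{V} or \cite[Proposition~5.19]{EH}) the set of $f_W$ over primitive even closed walks $W$ is the universal Gröbner basis of $I_A$, and we have shown this set is exactly $\mathcal G$, completing the proof.
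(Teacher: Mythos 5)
Your argument is correct, but it takes a more self-contained route than the paper does for this particular lemma: the paper's proof of Lemma~\ref{AUG} is a one-line citation to \cite[Lemma~3.2]{OH} (where this bouquet-of-odd-cycles computation is already recorded) combined with \cite[Proposition~10.1.10]{V1}. What you do instead is rederive that result from scratch via the classification of primitive even closed walks in Lemma~\ref{PECW}: since $A$ has no even cycles and any two of its cycles meet in the single vertex $u$, only case $(ii)$ survives, the walks are exactly the pairs $(C_i,C_j)$, and the odd/even position bookkeeping gives $f_{(C_i,C_j)}=e_i'e_j''-e_i''e_j'$. This is precisely the strategy the paper itself employs for the analogous Lemmas~\ref{BUG} and~\ref{CUG} in types two and three, so your proof is in the spirit of the paper even though it diverges from the literal proof of this lemma; it buys independence from \cite{OH} at the cost of a page of verification, and your direct check that $\phi(e_i')=u^2\phi(e_i'')$ (hence $\mathcal G\subset I_A$) is a useful sanity check that the citation route skips entirely. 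The only soft spot is the primitivity discussion: Lemma~\ref{PECW} gives necessary forms for primitive walks, not sufficiency, so to see that each $(C_i,C_j)$ really is primitive it is cleanest to note that the candidate primitive binomials all have the form $e_a'e_b''-e_a''e_b'$ and no such binomial's two monomials can respectively divide those of another with $\{a,b\}\neq\{i,j\}$ (their variable supports differ), which pins down the universal Gr\"obner basis as exactly $\mathcal G$; your "supported on fewer edges" remark is gesturing at this but could be made explicit.
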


\begin{proof} It follows from \cite[Lemma~3.2]{OH} together with \cite[Proposition~10.1.10]{V1}.
\end{proof}

Going forward, we work in the standard graded polynomial ring $$\mathbb{K}[E(A)] = \mathbb{K}[e_{1,1},\ldots,e_{1,2p_{1}+1},\ldots\ldots, e_{m,1},\ldots,e_{m,2p_{m}+1}].$$
Let $<$ denote the lexicographic monomial order on  $\mathbb{K}[E(A)]$  satisfying
\begin{equation*}\label{monomialorder1}
e_{1,1}<\cdots <e_{1,2p_{1}+1}<\cdots\cdots <e_{m,1}<\cdots <e_{m,2p_{m}+1},
\end{equation*}
and let $J_A$ denote the initial ideal of $I_A$ with respect to the monomial order $<$.
\begin{Proposition}
 The minimal set of monomial generators of $J_A$ is given by $$\mathcal{M}=\left\{e''_{i}e'_{j}\ |\   1 \leq i < j \leq m \right\}.$$
\end{Proposition}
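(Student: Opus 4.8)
The plan is to determine the initial terms of the binomials $g_{ij} = e'_i e''_j - e''_i e'_j$ (for $1 \le i < j \le m$) with respect to the given lexicographic order, and then to argue that the resulting monomials already form a \emph{minimal} generating set, so that no further reduction (i.e.\ no application of Buchberger's criterion producing new S-pairs) enlarges the generating set. Since Lemma~\ref{AUG} tells us that $\mathcal{G} = \{g_{ij} \mid 1 \le i < j \le m\}$ is a universal Gröbner basis of $I_A$, it is in particular a Gröbner basis with respect to $<$, so $J_A = \ini_<(I_A)$ is generated by $\{\ini_<(g_{ij}) \mid 1 \le i < j \le m\}$. Thus the whole problem reduces to (a) computing each $\ini_<(g_{ij})$ and (b) extracting the minimal generators from the (possibly redundant) list of these monomials.

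For step (a): fix $i < j$. We must compare the two monomials $e'_i e''_j = (e_{i,1}e_{i,3}\cdots e_{i,2p_i+1})(e_{j,2}e_{j,4}\cdots e_{j,2p_j})$ and $e''_i e'_j = (e_{i,2}e_{i,4}\cdots e_{i,2p_i})(e_{j,1}e_{j,3}\cdots e_{j,2p_j+1})$ in the lex order where the variables increase with index and the block for index $i$ precedes the block for index $j$. Both monomials have the same degree; I would cancel the common factors and compare the ``first'' (largest) remaining variable. The variables appearing with a net imbalance are among the $e_{i,\bullet}$ (the block with the \emph{smaller}, hence lex-\emph{lower}, index) and the $e_{j,\bullet}$. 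Because in lex order a monomial is larger precisely when, reading from the top variable down, it first has a strictly higher exponent, and since the $e_{j,\bullet}$-variables all exceed the $e_{i,\bullet}$-variables, the comparison is decided by the $e_{j,\bullet}$ part: $e'_j$ contains $e_{j,2p_j+1}$, the single largest variable in the whole comparison, which does \emph{not} appear in $e''_j$. Hence the monomial $e''_i e'_j$ is the larger one, i.e.\ $\ini_<(g_{ij}) = e''_i e'_j$. This gives exactly the set $\mathcal{M} = \{e''_i e'_j \mid 1 \le i < j \le m\}$ as a generating set of $J_A$; the main (only mildly delicate) point here is to justify cleanly that the presence of the top variable $e_{j,2p_j+1}$ in $e''_i e'_j$ but not in $e'_i e''_j$ already forces the inequality regardless of all the lower-index behavior — which is immediate from the definition of lex order.

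For step (b), minimality: I claim the monomials $e''_i e'_j$, $1 \le i < j \le m$, are pairwise non-divisible, so none can be dropped. Suppose $e''_i e'_j \mid e''_k e'_l$ with $(i,j) \ne (k,l)$. Looking at which ``index blocks'' of variables occur: $e''_i e'_j$ involves only variables with first index $i$ or $j$, while $e''_k e'_l$ involves only those with first index $k$ or $l$; divisibility forces $\{i,j\} \subseteq \{k,l\}$, hence (as both are $2$-element sets with $i<j$, $k<l$) $\{i,j\}=\{k,l\}$ and so $(i,j)=(k,l)$, a contradiction. One can even be more careful and note that within block $i$ the factor $e''_i$ uses the even-indexed edges while $e'_i$ uses the odd-indexed ones, so the exponent patterns are incompatible across the two roles, but the cruder block argument already suffices. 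Therefore $\mathcal{M}$ is the minimal monomial generating set of $J_A$, completing the proof. I do not anticipate a serious obstacle; the only place requiring care is the lex-order comparison in step (a), and there the key observation is simply that the largest variable involved, $e_{j,2p_j+1}$, sits in $e'_j$ and not in $e''_j$.
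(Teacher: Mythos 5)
Your proposal is correct and follows the same route as the paper: the paper likewise observes that $e'_ie''_j<e''_ie'_j$ in the given lex order (so Lemma~\ref{AUG} yields $\mathcal M$ as a generating set) and notes that minimality is checked directly from non-divisibility. You have simply written out the details of the lex comparison and the pairwise non-divisibility that the paper leaves to the reader.
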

\begin{proof}
Note that $e'_{i}e''_{j}<e''_{i}e'_{j}$ for $1\leq i<j\leq m$, we can deduce  from Lemma~\ref{AUG} that $J_A$ is generated by $\mathcal{M}$. The minimality of $\mathcal{M}$ can be checked directly.
\end{proof}

\subsection{Compact graphs of type 2}  \label{3.2} Assume that  $n,m$ and $p_{1}, \ldots, p_{m}, q_{1}, \ldots, q_{n}$ are given positive integers. Let $s\geq 0$ be an even number.
 We use $B$ denote the graph $B^{s}_{p_{1}, \ldots, p_{m}:q_{1}, \ldots, q_{n}}$ for short. Then, we may assume that $B$ has  vertex set
\begin{align*}
V(B)=\{& u,v\}  \cup \{w_{1}, \ldots, w_{s-1}\} \\
\cup \{u_{i,j}\mid 1 \leq i \leq m, 1 \leq j \leq 2p_{i}\} & \cup \{v_{i,j}\mid 1 \leq i \leq n, 1 \leq j \leq 2q_{i}\}
\end{align*}

and edge set
\begin{align*}
E(B)=\{& \{u_{i,j},u_{i,j+1}\} \mid 1 \leq i \leq m, 1 \leq j \leq 2p_{i}-1 \}\\
&\cup \{ \{u,u_{i,1}\}, \{u,u_{i,2p_{i}}\} \mid  1 \leq i \leq m \} \\
&\cup \{\{u,v\},\{u,w_{1}\},\{v,w_{s-1}\}\} \cup \{\{w_{i},w_{i+1}\} \mid  1 \leq i \leq s-2\} \\
&\cup \{ \{v_{i,j},v_{i,j+1}\} \mid 1 \leq i \leq n, 1 \leq j \leq 2q_{i}-1 \}\\
&\cup \{ \{v,v_{i,1}\}, \{v,v_{i,2q_{i}}\} \mid  1 \leq i \leq n \}.
\end{align*}

The edges of $B$ are labeled as follows.
 For $i\in{\{1,\ldots,m\}}$ let $e_{i,1}=\{u,u_{i,1}\}$ and $e_{i,2p_{i}+1}=\{u, u_{i,2p_i}\}$.
 For $i\in{\{1,\ldots,m\}}$ and $j\in{\{1,\ldots,2p_{i}-1\}}$ let $e_{i,j+1}=\{u_{i,j},u_{i,j+1}\}$ .
 Let $x=\{u,v \}$, $x_{1}=\{u,w_{1} \}$ and $x_{s}=\{v,w_{s-1}\}$. For $i\in{\{1,\ldots,s-2\}}$ let $x_{i+1}=\{w_{i},w_{i+1}\}$.
 For $i\in{\{1,\ldots,n\}}$ let $f_{i,1}=\{v,v_{i,1}\}$ and $f_{i,2q_{i}+1}=\{v,v_{i,2q_{i}+1}\}$.
For $i\in{\{1,\ldots,n\}}$ and $j\in{\{1,\ldots,2q_{i}-1\}}$ let $f_{i,j+1}=\{v_{i,j},v_{i,j+1}\}$.

We put $$e_i'=e_{i,1}e_{i,3}\cdots e_{i,2p_i+1}  \mbox{ and } e_i''=e_{i,2}e_{i,4}\cdots e_{i,2p_i},$$
$$f_i'=f_{i,1}f_{i,3}\cdots f_{i,2q_i+1}  \mbox{ and } f_i''=f_{i,2}f_{i,4}\cdots f_{i,2q_i},$$
and put $$x'=x_1x_3\cdots x_{s-1}, \mbox{ and } x''=x_2x_4\cdots x_{s}.$$

Note that if $s=0$ then both $x'$ and $x''$ vanish.
\begin{Lemma}\label{BUG}
For any positive integers $m,n$ and $p_{1}, \ldots, p_{m}, q_{1}, \ldots, q_{n} $ and an integer $s\geq 0$, the universal Gr\"obner basis of $I_B$  is given by  $\mathcal G=\mathcal G_1\cup \mathcal G_2\cup \mathcal G_3 \cup \mathcal G_4\cup \mathcal G_5\cup \mathcal G_6$,  where
\begin{enumerate}
\item[$(i)$] $\mathcal G_1= \{e'_{i}e''_{j}-e''_{i}e'_{j}\ |\ 1 \leq i < j \leq m\}$,
\item[$(ii)$] $\mathcal G_2= \{f'_{i}f''_{j}-f''_{i}f'_{j}\ |\ 1 \leq i < j \leq n\}$,
\item[$(iii)$] $\mathcal G_3=\{e'_{i}f'_{j}-e''_{i}x^{2}f''_{j}\ |\ 1 \leq i \leq m ,1 \leq j \leq n\}$,
\item[$(iv)$] $\mathcal G_4=\{ e'_{i}x''^{2}f''_{j}-e''_{i}x'^{2}f'_{j}\ |\ 1 \leq i \leq m ,1 \leq j \leq n\}$,
\item[$(v)$] $\mathcal G_5=\{ e'_{i}x''-e''_{i}x'x\ |\ 1 \leq i \leq m\}$,and
\item[$(vi)$] $\mathcal G_6=\{ f'_{i}x'-f''_{i}x''x\ |\ 1 \leq i \leq n\}$.
\end{enumerate}
 It should be noted that $\mathcal G_4, \mathcal G_5$ and  $\mathcal G_6$ vanish if $s=0$.
\end{Lemma}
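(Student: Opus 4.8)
The plan is to use the standard description of the universal Gr\"obner basis of $I_B$ as the set of binomials $f_W$, with $W$ ranging over the primitive even closed walks of $B$ (recalled just before Lemma~\ref{PECW}), and then to enumerate these walks via the trichotomy of Lemma~\ref{PECW}. Throughout, reversing a walk or shifting its base point alters $f_W$ by at most a sign, so it does not affect the ideal, and each binomial is listed once (e.g.\ with $i<j$). First I would record all cycles of $B$: directly from the construction (or via Lemma~\ref{2.3}), they are exactly the $m$ odd cycles $\Omega^u_i$ through $u$ with edge sequence $e_{i,1},\dots,e_{i,2p_i+1}$, the $n$ odd cycles $\Omega^v_j$ through $v$ with edge sequence $f_{j,1},\dots,f_{j,2q_j+1}$, and, when $s>0$, the unique odd cycle $\Omega$ of length $s+1$ with edge sequence $x_1,\dots,x_s,x$. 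Since $B$ is compact it has no even cycle, so case $(i)$ of Lemma~\ref{PECW} contributes nothing.

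For case $(ii)$ one needs every pair of odd cycles meeting in exactly one vertex. From the list, these are $(\Omega^u_i,\Omega^u_j)$ and $(\Omega^v_i,\Omega^v_j)$ with $i<j$, and, only if $s>0$, $(\Omega^u_i,\Omega)$ and $(\Omega^v_j,\Omega)$; a pair $\Omega^u_i,\Omega^v_j$ is vertex-disjoint and so does not occur here. Concatenating each pair at its common vertex and reading off $f_W=\prod e_{2k-1}-\prod e_{2k}$---keeping track of which edges fall in odd versus even positions, where the evenness of $s$ is used to locate $x_s$ and $x$---yields exactly the binomials of $\mathcal G_1$, $\mathcal G_2$, $\mathcal G_5$ and $\mathcal G_6$, respectively; in particular $\mathcal G_5,\mathcal G_6$ appear only when $s>0$, as the statement asserts.

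For case $(iii)$, the two disjoint odd cycles $C_1,C_2$ joined by walks $\Gamma_1,\Gamma_2$ whose interiors avoid $V(C_1)\cup V(C_2)$ must be some $\Omega^u_i$ and some $\Omega^v_j$, since these are the only vertex-disjoint pairs of cycles in $B$. The key local fact is that every walk of $B$ from $u$ to $v$ whose interior meets neither $\{u,v\}$ nor any cycle vertex is either the single edge $x$ or the path $P=u-w_1-\cdots-w_{s-1}-v$ of length $s$: as soon as such a walk enters another $u$- or $v$-cycle, its only exit leads back to $u$ (resp.\ $v$), a forbidden interior vertex. Since $|E(C_1)|$ and $|E(C_2)|$ are odd, $W$ is even only if $|E(\Gamma_1)|+|E(\Gamma_2)|$ is even (the remark following Lemma~\ref{PECW}); because $s$ is even, this permits exactly $(\Gamma_1,\Gamma_2)=((x),(x))$ and $(\Gamma_1,\Gamma_2)=(P,P)$ and excludes the mixed choice. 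Evaluating $f_W$ on these two traversals, again using the parity of $s$, produces precisely $\mathcal G_3$ (the edge $x$ traversed twice, valid for every $s\ge0$) and $\mathcal G_4$ (the path $P$ traversed twice, which requires $s>0$). Since, conversely, every binomial of $\mathcal G_1\cup\cdots\cup\mathcal G_6$ is visibly the $f_W$ of one of the walks listed above, this yields the assertion and also shows $\mathcal G_4,\mathcal G_5,\mathcal G_6$ are empty when $s=0$.

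I expect two steps to need the most care. The first is the claim that $x$ and $P$ are the only admissible connecting walks in case $(iii)$; this rests on the ``forced return'' observation above and on $B$ being exactly the stated graph, with no extra edges joining the two sides. The second is the position-parity bookkeeping in each evaluation of $f_W$, which is precisely where the hypothesis that $s$ is even makes the alternating strings $x'=x_1x_3\cdots x_{s-1}$ and $x''=x_2x_4\cdots x_s$ come out as written; although purely mechanical, this is the step most prone to indexing slips, and it deserves the fullest exposition.
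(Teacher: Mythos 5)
Your proposal is correct and follows essentially the same route as the paper: both identify the universal Gr\"obner basis with the binomials of primitive even closed walks and then enumerate those walks via Lemma~\ref{PECW}, arriving at exactly the six families (with $\mathcal G_4,\mathcal G_5,\mathcal G_6$ vanishing for $s=0$). The paper simply states the resulting list of walks and cites \cite[Proposition~10.1.10]{V1}, whereas you additionally justify the enumeration (the cycle inventory, the parity exclusion of the mixed connecting walks, and the ``forced return'' argument), which is a more complete write-up of the same argument.
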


\begin{proof} By \cite[Lemma~5.11]{HHO}, every primitive even closed walk of $B$ is one of the followings: \begin{itemize}
    \item $(e_{i,1},\ldots,e_{i,2p_{i}+1}, e_{j,1},\ldots, e_{j,2p_{j}+1})$, where $1 \leq i < j \leq m$,
    \item $(f_{i,1},\ldots,f_{i,2q_{i}+1}, f_{j,1},\ldots, f_{j,2q_{j}+1} )$, where $1 \leq i < j \leq n$,
    \item $(e_{i,1},\ldots,e_{i,2p_{i}+1},x,f_{j,1},\ldots, f_{j,2q_{j}+1},x )$, where $1 \leq i \leq m,1 \leq j \leq n$,
    \item $(e_{i,1},\ldots,e_{i,2p_{i}+1},x_{1}, \ldots, x_{s},f_{j,1}, \ldots, f_{j,2q_{j}+1},x_{s},\ldots,x_{1} )$, where $1 \leq i \leq m, 1 \leq j \leq n$,
    \item $(e_{i,1},\ldots,e_{i,2p_{i}+1},x_{1},\ldots,x_{s},x )$, where $1 \leq i \leq m$, and
    \item $(f_{i,1},\ldots,f_{i,2q_{i}+1},x,x_{1},\ldots,x_{s} )$, where $1 \leq i \leq n$.
 \end{itemize}The result  now follows  from \cite[Proposition~10.1.10]{V1}.\end{proof}

Let $<$ denote the lexicographic monomial ordering on the polynomial ring $\mathbb{K}[E(B)]$  satisfying
\begin{equation*}\label{monomialorder2}\begin{split}
e_{1,1}<\cdots <e_{1,2p_{1}+1}<\cdots\cdots <e_{m,1}<\cdots e_{m,2p_{m}+1}<x<x_{1}<\cdots <x_{s}\\<f_{1,1}<
\cdots <f_{1,2q_{1}+1}<\cdots\cdots <f_{n,1}<\cdots < f_{n,2q_{n}+1}.\end{split}
\end{equation*}
and let $J_B$ be the initial ideal of $I_B$ with respect to this order.

\begin{Proposition}
 The minimal set of monomial generators of $J_B$ is given by $\mathcal{M}=\mathcal{M}_{1}\cup\mathcal{M}_{2}\cup\mathcal{M}_{3}\cup\mathcal{M}_{4}\cup\mathcal{M}_{5}$,  where
\begin{enumerate}
\item[$(i)$] $\mathcal M_1= \left\{e''_{i}e'_{j}\ |\   1 \leq i < j \leq m \right\}$,
\item[$(ii)$] $\mathcal M_2= \left\{f''_{i}f'_{j}\ |\   1 \leq i < j \leq n \right\}$,
\item[$(iii)$] $\mathcal M_3=\left\{ e'_{i}f'_{j}\ |\   1 \leq i \leq m,1 \leq j \leq n \right\}$,
\item[$(iv)$] $\mathcal M_4=\left\{ e'_{i}x''\ |\   1 \leq i \leq m \right\}$, and
\item[$(v)$] $\mathcal M_5=\left\{ f'_{i}x'\ |\   1 \leq i \leq n \right\}$.
\end{enumerate}
 It should be noted that $\mathcal{M}_{4}$ and $\mathcal{M}_{5}$ vanish if $s=0$.
\end{Proposition}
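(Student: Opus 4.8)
The plan is to read off the minimal generators of $J_B$ from the universal Gröbner basis $\mathcal G = \mathcal G_1 \cup \cdots \cup \mathcal G_6$ of Lemma~\ref{BUG} by computing leading terms with respect to the given lexicographic order, and then to discard the redundant ones. First I would compare the two monomials of each binomial in $\mathcal G_k$. For $\mathcal G_1$ and $\mathcal G_2$ this is exactly the computation already done for type~1 graphs, giving the leading terms $e''_ie'_j$ ($i<j$) and $f''_if'_j$ ($i<j$), i.e.\ $\mathcal M_1$ and $\mathcal M_2$. For $\mathcal G_3 = \{e'_if'_j - e''_ix^2f''_j\}$: since every $e$-variable precedes $x$ and every $f$-variable, the monomial containing the $f'_j$-factor (all of whose variables are $f$-variables with odd second index) beats the one containing $x^2$ and $f''_j$, so the leading term is $e'_if'_j$, giving $\mathcal M_3$. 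For $\mathcal G_5 = \{e'_ix'' - e''_ix'x\}$: here one compares $e'_i x''$ with $e''_i x' x$; the $e'_i$-part uses $e_{i,1},e_{i,3},\dots,e_{i,2p_i+1}$ while $e''_i$ uses $e_{i,2},\dots,e_{i,2p_i}$, and since $e_{i,2p_i+1}$ is the largest variable occurring, the monomial $e'_i x''$ wins, giving $\mathcal M_4$; symmetrically $\mathcal G_6$ yields $\mathcal M_5$.

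Next I would handle $\mathcal G_4 = \{e'_ix''^2f''_j - e''_ix'^2f'_j\}$. Comparing the two monomials: both have the same $e$-degree pattern except $e'_i$ versus $e''_i$, and the largest variable appearing among these is $e_{i,2p_i+1}$, which sits in $e'_i$; hence the leading term is $e'_ix''^2f''_j$. I would then show this monomial is \emph{not} a minimal generator: it is divisible by $e'_ix'' \in \mathcal M_4$ (using one copy of $x''$), so it is redundant. This is why $\mathcal G_4$ contributes nothing new to the minimal generating set, and it also explains why $\mathcal M$ is built from $\mathcal G_1,\mathcal G_2,\mathcal G_3,\mathcal G_5,\mathcal G_6$ only. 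Thus $J_B$ is generated by $\mathcal M_1\cup\mathcal M_2\cup\mathcal M_3\cup\mathcal M_4\cup\mathcal M_5$.

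Finally I would verify minimality, i.e.\ that no element of $\mathcal M$ divides another. The monomials in $\mathcal M_1,\mathcal M_2$ are squarefree products of two "groups" of variables (one primed, one double-primed) from the $e$- or $f$-sides; those in $\mathcal M_3$ mix an $e'$ with an $f'$; those in $\mathcal M_4$ mix $e'$ with $x''$ and those in $\mathcal M_5$ mix $f'$ with $x'$. A divisibility $u\mid w$ between two such monomials would force the variable-support of $u$ to sit inside that of $w$; since each $e'_i$ (resp.\ $e''_i$, $f'_i$, $f''_i$) is a product over a fixed index block and distinct blocks are variable-disjoint, one checks case by case that the only way one support contains another is $u=w$. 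For instance $e''_ie'_j \mid e''_ke'_l$ forces $\{i,j\}=\{k,l\}$ with $i<j$, $k<l$, hence $(i,j)=(k,l)$; and $e'_if'_j$ cannot be divisible by any $e''_ke'_l$ (no $f$-variables on the right) nor by $e'_kx''$ (no $f$-variables) etc. I expect this bookkeeping to be the only mildly delicate point, but it is entirely routine given the block-disjointness of the variable sets; the substantive content is the leading-term computation, especially recognizing that $\mathcal G_4$'s leading term is absorbed by $\mathcal M_4$.
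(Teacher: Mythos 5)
Your overall route is the same as the paper's: extract the initial terms of the universal Gr\"obner basis of Lemma~\ref{BUG}, observe that the contribution of $\mathcal G_4$ is redundant, and verify minimality by pairwise non-divisibility. However, there is a concrete error in your leading-term computations for $\mathcal G_4$ and $\mathcal G_5$: you assert that $e_{i,2p_i+1}$ is ``the largest variable occurring,'' but in the order fixed in Subsection~3.2 the $e$-variables form the \emph{smallest} block; one has $e_{\bullet,\bullet}<x<x_1<\cdots<x_s<f_{\bullet,\bullet}$, so a lexicographic comparison is decided by the $f$-variables first, then the $x_i$'s, and only last by the $e$-variables. For $\mathcal G_5$ the slip is harmless: the true deciding variable is $x_s$, which lies in $x''$ (as $s$ is even), so the initial term is indeed $e'_ix''$. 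For $\mathcal G_4$ it is not harmless: the largest variable occurring in $e'_ix''^2f''_j-e''_ix'^2f'_j$ is $f_{j,2q_j+1}$, which lies in $f'_j$, so the initial term is $e''_ix'^2f'_j$ and \emph{not} $e'_ix''^2f''_j$ as you claim.

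Your conclusion nevertheless survives, for a different reason than the one you give: the correct initial term $e''_ix'^2f'_j$ is divisible by $f'_jx'\in\mathcal M_5$ (your intended divisor $e'_ix''\in\mathcal M_4$ divides the \emph{other} monomial of that binomial). Hence the $\mathcal G_4$-terms are still absorbed and $J_B$ is still minimally generated by $\mathcal M_1\cup\cdots\cup\mathcal M_5$; the minimality check at the end is routine and is exactly what the paper does. So the proof is repairable by a one-line correction, but as written the key computation for $\mathcal G_4$ is wrong, and the same misreading of the variable order could easily have produced an incorrect generating set had the redundancy not happened to go through either way.
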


\begin{proof}
That $\mathcal{M}$ is a generating set with respect to the given order follows from Lemma~\ref{BUG}. That it is minimal follows from the fact that none of the monomials are divided by any of the others.
\end{proof}

\subsection{Compact graphs of type 3} \label{3.3}

 Given positive integers $m,n,k$, as well as the tuples $\underline{p}=(p_{1}, \ldots, p_{m})$, $\underline{q}=(q_{1}, \ldots, q_{n})$ and  $\underline{r}=(r_{1}, \ldots, r_{k})$,
we denote the graph $C_{\underline{p}:\underline{q}:\underline{r}}$ as $C$ for brevity. Here, $p_i,q_i,r_i$ are all positive integers. By definition, we may assume $C$ has  vertex set
\begin{align*}
V(C)=&\{\{ u,v,w\} \cup \{u_{i,j}\mid 1 \leq i \leq m, 1 \leq j \leq 2p_{i}\}\\
& \cup \{v_{i,j}\mid 1 \leq i \leq n, 1 \leq j \leq 2q_{i}\} \cup \{w_{i,j}\mid 1 \leq i \leq k, 1 \leq j \leq 2r_{i}\},
\end{align*}

and edge set
\begin{align*}
E(C)=&\{ \{u_{i,j},u_{i,j+1}\} \mid 1 \leq i \leq m, 1 \leq j \leq 2p_{i}-1 \}\\
&\cup \{ \{u,u_{i,1}\}, \{u,u_{i,2p_{i}}\} \mid  1 \leq i \leq m \} \\
&\cup \{ \{v_{i,j},v_{i,j+1}\} \mid 1 \leq i \leq n, 1 \leq j \leq 2q_{i}-1 \}\\
&\cup \{ \{v,v_{i,1}\}, \{v,v_{i,2q_{i}}\} \mid  1 \leq i \leq n \}\\
&\cup \{ \{w_{i,j},w_{i,j+1}\} \mid 1 \leq i \leq k, 1 \leq j \leq 2r_{i}-1 \}\\
&\cup \{ \{w,w_{i,1}\}, \{w,w_{i,2r_{i}}\} \mid  1 \leq i \leq k \}\\
&\cup \{\{u,v\}, \{v, w\},\{w,u\}\} .
\end{align*}

We assign labels to the edges of $C$ as follows:
For $i\in{\{1,\ldots,m\}}$, let $e_{i,1}=\{u,u_{i,1}\}$ and $e_{i,2p_{i}+1}=\{u,u_{i,2p_{i}+1}\}$.
For $i\in{\{1,\ldots,m\}}$ and $j\in{\{1,\ldots,2p_{i}-1\}}$, let $e_{i,j+1}=\{u_{i,j},u_{i,j+1}\}$.

For $i\in{\{1,\ldots,n\}}$, let $f_{i,1}=\{v,v_{i,1}\}$ and $f_{i,2q_{i}+1}=\{v,v_{i,2q_{i}+1}\}$.
For $i\in{\{1,\ldots,n\}}$ and $j\in{\{1,\ldots,2q_{i}-1\}}$, let $f_{i,j+1}=\{v_{i,j},v_{i,j+1}\}$.

For $i\in{\{1,\ldots,k\}}$, let $g_{i,1}=\{w,w_{i,1}\}$ and $g_{i,2r_{i}+1}=\{w,w_{i,2r_{i}+1}\}$.
For $i\in{\{1,\ldots,k\}}$ and $j\in{\{1,\ldots,2r_{i}-1\}}$, let $g_{i,j+1}=\{w_{i,j},w_{i,j+1}\}$.

Furthermore, we define $x=\{u,v \}$, $y=\{v,w\}$, and $z=\{w,u\}$.

\begin{Lemma}\label{CUG}
For any integers $m,n,k$ and $p_{1}, \ldots, p_{m}, q_{1}, \ldots, q_{n}, r_{1}, \ldots, r_{k}$, the universal  Gr\"obner basis of $I_{C}$ is given by  in $\mathcal G=\mathcal G_1\cup \mathcal G_2\cup \mathcal G_3 \cup \mathcal G_4\cup \mathcal G_5\cup \mathcal G_6\cup\mathcal G_7\cup \mathcal G_8\cup \mathcal G_9 \cup \mathcal G_{10}\cup \mathcal G_{11}\cup \mathcal G_{12}$, where
\begin{enumerate}
\item[$(i)$] $\mathcal G_1= \{e'_{i}e''_{j}-e''_{i}e'_{j}\ |\ 1 \leq i < j \leq m\}$,
\item[$(ii)$] $\mathcal G_2= \{f'_{i}f''_{j}-f''_{i}f'_{j}\ |\ 1 \leq i < j \leq n\}$,
\item[$(iii)$] $\mathcal G_3=\{g'_{i}g''_{j}-g''_{i}g'_{j}\ |\ 1 \leq i < j \leq k\}$,
\item[$(iv)$] $\mathcal G_4=\{e'_{i}f'_{j}-e''_{i}x^{2}f''_{j}\ |\ 1 \leq i \leq m ,1 \leq j \leq n\}$,
\item[$(v)$] $\mathcal G_5=\{f'_{i}g'_{j}-f''_{i}y^{2}g''_{j}\ |\ 1 \leq i \leq n ,1 \leq j \leq k\}$,
\item[$(vi)$] $\mathcal G_6=\{g'_{i}e'_{j}-g''_{i}z^{2}e''_{j}\ |\ 1 \leq i \leq k ,1 \leq j \leq m\}$,
\item[$(vii)$] $\mathcal G_7= \{e'_{i}y^{2}f''_{j}-e''_{i}z^{2}f'_{j}\ |\ 1 \leq i \leq m ,1 \leq j \leq n\}$,
\item[$(viii)$] $\mathcal G_8= \{f'_{i}z^{2}g''_{j}-f''_{i}x^{2}g'_{j}\ |\ 1 \leq i \leq n ,1 \leq j \leq k\}$,
\item[$(ix)$] $\mathcal G_9= \{g'_{i}x^{2}e''_{j}-g''_{i}y^{2}e'_{j}\ |\ 1 \leq i \leq k ,1 \leq j \leq m\}$,
\item[$(x)$] $\mathcal G_{10}=\{e'_{i}y-e''_{i}zx\ |\ 1 \leq i \leq m\}$,
\item[$(xi)$] $\mathcal G_{11}=\{f'_{i}z-f''_{i}xy\ |\ 1 \leq i \leq n\}$, and
\item[$(xii)$] $\mathcal G_{12}=\{g'_{i}x-g''_{i}yz\ |\ 1 \leq i \leq k\}$.
\end{enumerate}
\end{Lemma}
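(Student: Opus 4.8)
The plan is to adapt the proof strategy already used for Lemma~\ref{AUG} and Lemma~\ref{BUG}: first identify all primitive even closed walks of $C$ via \cite[Lemma~5.11]{HHO} (i.e. Lemma~\ref{PECW}), then invoke \cite[Proposition~10.1.10]{V1} to conclude that the associated binomials form the universal Gr\"obner basis. So the real content is a careful enumeration of the primitive even closed walks of $C$, organized according to cases $(i)$, $(ii)$, $(iii)$ of Lemma~\ref{PECW}.

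First I would handle case $(i)$ of Lemma~\ref{PECW}: since $C$ is compact it has no even cycles, so this case is vacuous. Next, case $(ii)$ gives walks $(C_1,C_2)$ where $C_1,C_2$ are odd cycles sharing exactly one vertex. By the classification (Theorem~\ref{main1}) and the structure of $C=C_{\underline p:\underline q:\underline r}$, the odd cycles of $C$ are precisely the $m+n+k$ induced odd cycles through $u$, $v$, $w$ respectively, together with the triangle $u-v-w-u$. Two such cycles share a vertex exactly when they pass through the same big vertex, or when one of them is the triangle. Pairing two cycles through $u$ yields $\mathcal G_1$; similarly $\mathcal G_2$, $\mathcal G_3$ for $v$ and $w$. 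Pairing a cycle through $u$ with the triangle at $u$, and a cycle through $v$ with the triangle at $v$, yields (after reading off the associated binomial with the correct edge labelling, where $x=\{u,v\}$ contributes) the generators $\mathcal G_{10}$ and $\mathcal G_{11}$; pairing a cycle through $w$ with the triangle at $w$ gives $\mathcal G_{12}$. One must check the alternating edge pattern carefully so that, e.g., $e_i'y-e_i''zx$ comes out with $y$ on the first monomial and $zx$ on the second — this is a parity bookkeeping exercise using that each odd cycle through $u$ has $2p_i+1$ edges split as $e_i'$ (the $p_i+1$ odd-position edges) and $e_i''$ (the $p_i$ even-position edges). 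Finally, case $(iii)$ gives walks $(C_1,\Gamma_1,C_2,\Gamma_2)$ with $V(C_1)\cap V(C_2)=\emptyset$ joined by two vertex-disjoint connecting walks; here $C_1$, $C_2$ are odd cycles attached at two distinct big vertices, and the connecting walks must run between those big vertices. Between $u$ and $v$ the only such walk uses the edge $x$ (traversed twice, since a path of even length does not exist in $C$ unlike in $B$), giving $\mathcal G_4$; between $v$ and $w$ the edge $y$ gives $\mathcal G_5$; between $w$ and $u$ the edge $z$ gives $\mathcal G_6$. There are also walks where the two connecting walks take the two "long ways around" the triangle: from $u$ to $v$ via the direct edge $x$ on one side and via $z,y$ (through $w$) on the other, yielding $\mathcal G_7$ (with $y^2$ versus $z^2$ after squaring for parity); cyclically this produces $\mathcal G_8$ and $\mathcal G_9$. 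One should also confirm that no primitive walk uses all three odd cycles simultaneously or wraps around the triangle more than once — this follows from primitivity together with the minimality/ non-divisibility of the binomials, and from Lemma~\ref{2.4}.

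The main obstacle will be the exhaustiveness and the parity bookkeeping in case $(iii)$: unlike type~2, the triangle in a type~3 graph offers two routes between any pair of big vertices, so one must argue that the primitive even closed walks are exactly those listed and that no longer combination (e.g. a walk visiting $u,v,w$ and returning) is primitive. I would argue this by showing that any even closed walk not on the list has its binomial divisible by one of the twelve families' binomials — invoking the characterization in Lemma~\ref{PECW}$(iii)$ that the connecting walks $\Gamma_1,\Gamma_2$ contain no vertex of $V(C_1)\cup V(C_2)$ internally, which forces $\Gamma_1,\Gamma_2$ to be among $\{x\}$, $\{y\}$, $\{z\}$, or the two-edge paths $\{z,y\}$, $\{x,z\}$, $\{y,x\}$ through the remaining big vertex. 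Once the list of primitive even closed walks is pinned down, translating each into its binomial via the definition $f_W=\prod e_{2j-1}-\prod e_{2j}$ and matching with $\mathcal G_1,\ldots,\mathcal G_{12}$ is routine, and \cite[Proposition~10.1.10]{V1} finishes the proof.
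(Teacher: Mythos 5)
Your proposal follows essentially the same route as the paper: the paper's proof of this lemma is precisely the enumeration of the primitive even closed walks of $C$ via \cite[Lemma~5.11]{HHO} followed by an appeal to \cite[Proposition~10.1.10]{V1}, and your case analysis (case $(i)$ vacuous since $C$ has no even cycles; case $(ii)$ yielding $\mathcal G_1$--$\mathcal G_3$ and, via pairings with the triangle $u-v-w-u$, $\mathcal G_{10}$--$\mathcal G_{12}$; case $(iii)$ yielding $\mathcal G_4$--$\mathcal G_9$) matches the paper's twelve-item list of walks. One small correction to your bookkeeping: the walk producing $\mathcal G_7$ is not the one with connecting walks $x$ and $(z,y)$ --- that combination has connecting walks of total length $3$, which is odd, so it is not an even closed walk --- but rather $(e_{i,1},\ldots,e_{i,2p_i+1},z,y,f_{j,1},\ldots,f_{j,2q_j+1},y,z)$, in which \emph{both} connectors run through $w$, and this is exactly how the factors $y^2$ and $z^2$ arise in $e_i'y^2f_j''-e_i''z^2f_j'$.
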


\begin{proof}
In view of \cite[Lemma~5.11]{HHO}, every primitive even closed walk of $C$ is one of the followings:
\begin{itemize}
    \item $(e_{i,1},\ldots,e_{i,2p_{i}+1}, e_{j,1},\ldots, e_{j,2p_{j}+1})$, where $1 \leq i < j \leq m$,
    \item $(f_{i,1},\ldots,f_{i,2q_{i}+1}, f_{j,1},\ldots, f_{j,2q_{j}+1} )$, where $1 \leq i < j \leq n$,
    \item $(g_{i,1},\ldots,g_{i,2r_{i}+1}, g_{j,1},\ldots, g_{j,2r_{j}+1} )$, where $1 \leq i < j \leq k$,
    \item $(e_{i,1},\ldots,e_{i,2p_{i}+1},x,f_{j,1},\ldots, f_{j,2q_{j}+1},x )$, where $1 \leq i \leq m,1 \leq j \leq n$,
    \item $(f_{i,1},\ldots,f_{i,2q_{i}+1},y,g_{j,1},\ldots, g_{j,2r_{j}+1},y )$, where $1 \leq i \leq n,1 \leq j \leq k$,
    \item $(g_{i,1},\ldots,g_{i,2r_{i}+1},z,e_{j,1},\ldots,e_{j,2p_{j}+1},z )$, where $1 \leq i \leq k,1 \leq j \leq m$,
   \item $(e_{i,1},\ldots,e_{i,2p_{i}+1},z,y,f_{j,1},\ldots, f_{j,2q_{j}+1},y,z)$, where $1 \leq i \leq m, 1 \leq j \leq n$,
    \item $(f_{i,1},\ldots,f_{i,2q_{i}+1},x,z,g_{j,1},\ldots, g_{j,2r_{j}+1},z,x)$, where $1 \leq i \leq n,1 \leq j \leq k$,
    \item $(g_{i,1},\ldots,g_{i,2r_{i}+1},z,x,e_{j,1},\ldots,e_{j,2p_{j}+1},x,y)$, where $1 \leq i \leq k,1 \leq j \leq m$,
    \item $(e_{i,1},\ldots,e_{i,2p_{i}+1},z,y,x)$, where $1 \leq i \leq m$,
    \item $(f_{i,1},\ldots,f_{i,2q_{i}+1},x,z,y)$, where $1 \leq i \leq n$, and
    \item $(g_{i,1},\ldots,g_{i,2r_{i}+1},y,x,z)$, where $1 \leq i \leq k$.
 \end{itemize}

Now the result  follows  from \cite[Proposition~10.1.10]{V1}.\end{proof}

Going forward, we work in the standard graded polynomial ring $\mathbb{K}[E(C)]$, where the variables (i.e., the edges of $C$) is ordered as follows:
\begin{equation*}\begin{split}
& e_{1,1}<\cdots <e_{1,2p_{1}+1}<\cdots\cdots <e_{m,1}<\cdots e_{m,2p_{m}+1}<x<z<y
 \\ & <f_{1,1}<\cdots<f_{1,2q_{1}+1}<\cdots\cdots <f_{n,1}<\cdots f_{n,2q_{n}+1}<g_{1,1}<
\cdots\\&  <g_{1,2r_{1}+1}<\cdots\cdots <g_{k,1}<\cdots <g_{k,2r_{k}+1}.\end{split}
\end{equation*}
Let $J_C$ denote the initial ideal of $I_C$ with respect to the lexicographic monomial ordering $<$ on  $\mathbb{K}[E(C)]$   induced by the above order of variables.

By putting: $$e_i'=e_{i,1}e_{i,3}\cdots e_{i,2p_i+1}  \mbox{ and }  e_i''=e_{i,2}e_{i,4}\cdots e_{i,2p_i},$$
$$f_j'=f_{j,1}f_{j,3}\cdots f_{j,2q_j+1}  \mbox{ and } f_j''=f_{j,2}f_{j,4}\cdots f_{j,2q_j},$$
and
$$g_{\ell}'=g_{\ell,1}g_{\ell,3}\cdots g_{\ell,2r_\ell+1}  \mbox{ and  } g_{\ell}''=g_{\ell,2}g_{\ell,4}\cdots g_{\ell,2r_\ell},$$
where $1\leq i\leq m, 1\leq j\leq n$ and $1\leq \ell \leq k$, we obtain the following result.
\begin{Proposition}
The minimal set of monomial generators of $J_C$ is given by $\mathcal{M}=\mathcal{M}_{1}\cup\mathcal{M}_{2}\cup\mathcal{M}_{3}\cup\mathcal{M}_{4}\cup\mathcal{M}_{5}\cup\mathcal{M}_{6}\cup\mathcal{M}_{7}\cup\mathcal{M}_{8}\cup\mathcal{M}_{9}$,  where
\begin{enumerate}
\item[$(i)$] $\mathcal M_1= \left\{e''_{i}e'_{j}\ |\   1 \leq i < j \leq m \right\}$,
\item[$(ii)$] $\mathcal M_2= \left\{f''_{i}f'_{j}\ |\   1 \leq i < j \leq n \right\}$,
\item[$(iii)$] $\mathcal M_3=\left\{g''_{i}g'_{j}\ |\   1 \leq i < j \leq k  \right\}$,
\item[$(iv)$] $\mathcal M_4=\left\{e'_{i}f'_{j}\ |\   1 \leq i \leq m,1 \leq j \leq n \right\}$,
\item[$(v)$] $\mathcal M_5=\left\{f'_{i}g'_{j}\ |\   1 \leq i \leq n,1 \leq j \leq k \right\}$,
\item[$(vi)$] $\mathcal M_6=\left\{g'_{i}e'_{j}\ |\   1 \leq i \leq k,1 \leq j \leq m \right\}$,
\item[$(vii)$] $\mathcal M_7=\left\{e'_{i}y\ |\   1 \leq i \leq m \right\}$,
\item[$(viii)$] $\mathcal M_8=\left\{f'_{i}z\ |\   1 \leq i \leq n \right\}$, and
\item[$(ix)$] $\mathcal M_9=\left\{ g'_{i}x\ |\   1 \leq i \leq k \right\}$.
\end{enumerate}
\end{Proposition}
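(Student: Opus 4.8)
The plan is to mimic exactly the structure of the proofs of the analogous Propositions for $J_A$ and $J_B$: produce the universal Gröbner basis (here Lemma~\ref{CUG}), read off the initial term of each binomial generator with respect to the chosen lexicographic order, and then discard those initial monomials that are divisible by another one in the list. First I would fix the variable order displayed above, noting in particular that among the three ``connector'' variables we have $x < z < y$, and that all $e$-variables precede $x,z,y$, which in turn precede all $f$-variables, which precede all $g$-variables. With this order in hand I would go through the twelve families $\mathcal G_1,\dots,\mathcal G_{12}$ of Lemma~\ref{CUG} one at a time and determine $\ini_<(f)$ for each binomial $f$.

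The routine computations go as follows. For $\mathcal G_1,\mathcal G_2,\mathcal G_3$ the situation is identical to the type-1 case: in $e'_ie''_j - e''_ie'_j$ (with $i<j$) the monomial $e''_ie'_j$ is larger, since the largest variable where the two monomials differ is $e_{j,1}$ (appearing in $e'_j$ but not $e''_i$), giving $\mathcal M_1,\mathcal M_2,\mathcal M_3$. For $\mathcal G_4 = \{e'_if'_j - e''_ix^2f''_j\}$, the monomial $e'_if'_j$ contains the $f$-variable $f_{j,1}$ while $e''_ix^2f''_j$ does not, and $f_{j,1}$ outranks every $e$-variable and $x$, so $\ini_< = e'_if'_j$; similarly $\ini_<$ of the $\mathcal G_5$ binomial is $f'_ig'_j$ (the $g$-variable $g_{j,1}$ wins) and of $\mathcal G_6$ is $g'_ie'_j$ is \emph{wrong}: here one must be careful—$g'_ie'_j$ versus $g''_iz^2e''_j$, the largest differing variable is a $g$-variable ($g_{i,1}$ appears in $g'_i$, not in $g''_i$), so $\ini_< = g'_ie'_j$, matching $\mathcal M_6$. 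For $\mathcal G_{10}=\{e'_iy - e''_izx\}$ the variable $y$ beats $z$, $x$ and all $e$-variables, so $\ini_< = e'_iy$; likewise $\ini_<$ of the $\mathcal G_{11}$ binomial is $f'_iz$ (the $f$-variables and then $z$ versus $x,y$: $z<y$ but $z>x$, and $f$-variables dominate, so the top differing variable is an $f$-variable in $f'_i$, giving $f'_iz$) and of $\mathcal G_{12}$ is $g'_ix$. Finally, for the ``mixed'' families $\mathcal G_7,\mathcal G_8,\mathcal G_9$ — e.g. $e'_iy^2f''_j - e''_iz^2f'_j$ — the two monomials share $f$-degree patterns but differ at $f_{j,1}$ (in $f'_j$, hence on the right), so the right-hand monomial $e''_iz^2f'_j$ would be the initial one; however this monomial is divisible by $f'_iz \in \mathcal M_8$ is again not quite right, rather by $e'_iy \in \mathcal M_7$? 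One checks $e''_iz^2f'_j$ is divisible by $f'_jz \in \mathcal M_8$. The upshot is that the initial monomials coming from $\mathcal G_7,\mathcal G_8,\mathcal G_9$ are all redundant, being multiples of generators already in $\mathcal M_7\cup\mathcal M_8\cup\mathcal M_9$, so they do not appear among the minimal generators. This is exactly parallel to how $\mathcal G_4$ (type 2, family $\mathcal G_4$) disappeared in the type-2 Proposition.

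After computing all initial terms I would assemble the candidate list $\mathcal M_1\cup\dots\cup\mathcal M_9$ and then prove minimality by checking that no monomial in the list divides another — this is a finite, purely combinatorial check exploiting that the $e'_i,e''_i,f'_j,f''_j,g'_\ell,g''_\ell$ involve pairwise disjoint variable sets for distinct branch-indices and that $e'_i, e''_i$ are ``complementary'' within a single branch so neither divides the other. The one genuinely delicate point — and the step I expect to be the main obstacle — is verifying that the initial monomials arising from $\mathcal G_7,\mathcal G_8,\mathcal G_9$ (and any from $\mathcal G_4,\mathcal G_5,\mathcal G_6$ that happen to be redundant) are indeed each divisible by some element of $\mathcal M_7\cup\mathcal M_8\cup\mathcal M_9$ (resp. $\mathcal M_4\cup\mathcal M_5\cup\mathcal M_6$), so that the minimal generating set is no larger than claimed; this requires carefully tracking which of $x,y,z$ appears in each initial term, using the fact that $s=0$-type degeneration does not occur here since all three connector edges $x,y,z$ are always present. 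Once that divisibility bookkeeping is done, the Proposition follows, since by Lemma~\ref{CUG} together with \cite[Proposition~10.1.10]{V1} the set $\mathcal G$ is a Gröbner basis of $I_C$ with respect to $<$, hence $J_C = \ini_<(I_C)$ is generated by $\{\ini_<(f) : f \in \mathcal G\}$, and we have just identified the minimal such generators as $\mathcal M$.
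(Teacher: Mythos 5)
Your proposal is correct and follows essentially the same route as the paper: read off the initial terms of the universal Gr\"obner basis from Lemma~\ref{CUG} under the stated lexicographic order, discard the redundant ones (those from $\mathcal G_7,\mathcal G_8,\mathcal G_9$, each divisible by an element of $\mathcal M_7\cup\mathcal M_8\cup\mathcal M_9$), and verify that no remaining monomial divides another. The only quibble is that in several of your lex comparisons you name the wrong deciding variable (e.g.\ $e_{j,1}$ or $f_{j,1}$ instead of the \emph{largest} differing variable $e_{j,2p_j+1}$ or $f_{j,2q_j+1}$), but since both lie in the same monomial the conclusions are unaffected.
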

\begin{proof}
That $\mathcal{M}$ is a generating set with respect to the given order follows from Lemma~\ref{CUG}. That it is minimal follows from the fact that none of the monomials are divided by any of the others.
\end{proof}

\section{Projective dimension and regularity}

In this section, we aim to establish the following results. For convenience, we denote by $\mathfrak{t}(G)$ the number of induced cycles of $G$.
\begin{Theorem} \label{main4}
Let $G$ be a compact graph. Then  there is a monomial order $<$ such  that \begin{enumerate} \item  $\beta_{i}(\mathrm{in} _{<}(I_{G}))=(i+1)\binom{\mathfrak{t}(G)}{i+2}$ for all $i\geq 0$;

\item $\mathrm{pdim}(\mathbb{K}[E(G)]/\mathrm{in} _{<}(I_{G}))=\mathfrak{t}(G)-1$;
    \item $\mathbb{K}[E(G)]/\mathrm{in} _{<}(I_{G})$ is a Cohen-Macaulay ring;
    \item $\mathrm{reg}(\mathbb{K}[E(G)]/\mathrm{in} _{<}(I_{G}))=\mathrm{mat}(G)$.
     \end{enumerate}
\end{Theorem}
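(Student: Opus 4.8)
The plan is to run through the classification of compact graphs in Theorem~\ref{main1} and treat the three nontrivial families via the explicit initial ideals $J_G:=\mathrm{in}_{<}(I_G)$ computed in Section~3 (for the orders chosen there). A compact graph of type $0$ is a single odd cycle, so $I_G=0$, $\mathfrak{t}(G)=1$, and (1)--(3) are immediate; for type $0$ statement (4) should be read under the standing assumption that $G$ has a big vertex, i.e. $\mathfrak{t}(G)\ge2$ (for an odd $(2s+1)$-cycle one has $\mathrm{reg}=0\neq s=\mathrm{mat}(G)$). So assume $\mathfrak{t}(G)\ge2$.

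The heart of the argument is an induction on $\mathfrak{t}(G)$ through an Eliahou--Kervaire splitting. Pick an induced cycle sitting at the end of a pendant fan --- concretely the cycle with the last label $m$ on some arm --- and set $\ell:=e_m'$. Reading off the generator lists $\mathcal{M}_1,\mathcal{M}_2,\dots$ of Section~3 one sees that every minimal generator of $J_G$ involving a variable of this cycle is actually divisible by $\ell$; let $J$ be the ideal they generate. Then $J=\ell\cdot\mathfrak{a}$, where $\mathfrak{a}$ is generated by $t$ monomials lying in pairwise disjoint sets of variables --- one monomial for each of the remaining $\mathfrak{t}(G)-1$ induced cycles (the $e_i''$ with $i<m$, the $f_j'$, the $g_j'$, and the connecting monomial $x''$ or $y$ as prescribed by the type) --- so $t=\mathfrak{t}(G)-1$ and $\mathfrak{a}$ is a monomial complete intersection, whence $\beta_i(J)=\binom{\mathfrak{t}(G)-1}{i+1}$. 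Let $K$ be the ideal on the remaining minimal generators; it is the initial ideal $J_{G'}$ (for a suitable order) of the compact graph $G'$ obtained from $G$ by deleting the interior of the chosen cycle, and $\mathfrak{t}(G')=\mathfrak{t}(G)-1$. From the same lists one checks that each generator of $K$ is divisible by some generator of $\mathfrak{a}$, so $K\subseteq\mathfrak{a}$ and $J\cap K=\ell K$; since $\ell$ is coprime to $K$ one verifies directly that $J_G=J+K$ is an E-K splitting in the sense of Definition~\ref{DEKSP} (the splitting map sends $\ell w\mapsto(\ell\mu,w)$ where $\mu\mid w$ is a chosen generator of $\mathfrak{a}$; condition (1) holds because $\mu\mid w$, and condition (2) because $\ell$ meets no variable of $K$ while each $w$ carries a variable of a third cycle that no $\mu$ carries). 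The E-K formula, the Koszul value of $\beta(J)$, $\beta_{i-1}(J\cap K)=\beta_{i-1}(K)$, and the induction hypothesis $\beta_i(K)=(i+1)\binom{\mathfrak{t}(G)-1}{i+2}$ now combine, via Pascal's rule, to
$$\beta_i(J_G)=\binom{\mathfrak{t}(G)-1}{i+1}+(i+1)\binom{\mathfrak{t}(G)-1}{i+2}+i\binom{\mathfrak{t}(G)-1}{i+1}=(i+1)\binom{\mathfrak{t}(G)}{i+2},$$
which is (1); and (2) follows at once, since $\beta_i(J_G)\neq0$ exactly for $0\le i\le\mathfrak{t}(G)-2$.

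For (3) I would use Auslander--Buchsbaum. As $J_G$ and $I_G$ have the same Hilbert function, $\dim\mathbb{K}[E(G)]/J_G=\dim\mathbb{K}[G]=|V(G)|$ ($G$ being connected and non-bipartite), so by (2) the depth equals $|E(G)|-\mathfrak{t}(G)+1$. The Euler-type identity for compact graphs, $\mathfrak{t}(G)=|E(G)|-|V(G)|+1$ --- i.e. the induced cycles form a basis of the cycle space, verified case by case from the classification --- then gives depth $=|V(G)|=\dim$, so $\mathbb{K}[E(G)]/J_G$ is Cohen--Macaulay. For (4), run the same induction while tracking the top shift: the graded E-K formula gives $\mathrm{reg}(J_G)=\max\{\mathrm{reg}(J),\mathrm{reg}(K),\mathrm{reg}(J\cap K)-1\}$, where $\mathrm{reg}(J)=\deg\ell+\bigl(\sum_s\deg\mu_s-(t-1)\bigr)$ from the Koszul resolution of $\mathfrak{a}$, $\mathrm{reg}(K)=\mathrm{mat}(G')+1$ by induction, and $\mathrm{reg}(J\cap K)=\deg\ell+\mathrm{reg}(K)$. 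Since $\deg\ell=p_m+1\ge2$ the term $\deg\ell+\mathrm{mat}(G')$ dominates $\mathrm{reg}(K)$, so $\mathrm{reg}(J_G)=\deg\ell+\max\{\sum_s\deg\mu_s-(t-1),\,\mathrm{mat}(G')\}$. Two elementary facts then close the loop: the matching recursion $\mathrm{mat}(G)=\mathrm{mat}(G')+p_m$ (the deleted cycle's interior is a path on $2p_m$ vertices, disjoint from $G'$, contributing exactly $p_m$ edges to a maximum matching) and the inequality $\sum_s\deg\mu_s-(t-1)\le\mathrm{mat}(G')$ (a short computation from a closed formula for $\mathrm{mat}(G')$ of the relevant type). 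Together they give $\mathrm{reg}(J_G)=(p_m+1)+\mathrm{mat}(G')=\mathrm{mat}(G)+1$, i.e. $\mathrm{reg}(\mathbb{K}[E(G)]/J_G)=\mathrm{mat}(G)$.

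The difficulty here is organizational rather than conceptual. One must (a) verify --- type by type, and for each admissible choice of which cycle to peel --- that the complement $K$ really is the initial ideal of a smaller compact graph $G'$ and that the two E-K conditions hold (noting that at the "degenerate" peels the type of $G'$ changes, though $\mathfrak{t}(G')=\mathfrak{t}(G)-1$ and the relevant invariants persist); (b) dispose of the base cases ($\mathfrak{t}(G)=2$, where $J_G$ is principal, and the first reductions of types $2$ and $3$, which have at most six generators and resolutions one can write by hand --- e.g. the path-graph ideal governing $B^{s}_{p_1:q_1}$); and (c) for part (4), compile closed formulas for $\mathrm{mat}$ of the three families and check the degree inequality above. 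I expect (a) and (c) --- matching the combinatorics of the peeled generators against both the degree of the top Koszul syzygy and the matching number --- to be where care is genuinely needed.
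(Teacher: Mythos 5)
Your plan is essentially the paper's own proof: the same cycle-peeling E--K splittings $J_G=J_{G'}+\ell\,\mathfrak{a}$ with $\mathfrak{a}$ a complete intersection in disjoint variables and $J_{G'}\cap \ell\mathfrak{a}=\ell J_{G'}$, the same Koszul/Pascal bookkeeping for (1)--(2), the same graded E--K recursion with $\mathrm{reg}$ of the complete intersection for (4), and the same dimension count via $\mathfrak{t}(G)=|E(G)|-|V(G)|+1$ together with Auslander--Buchsbaum for (3), organized as one induction on $\mathfrak{t}(G)$ rather than the paper's per-type inductions. Your remark that (4) must exclude type 0 (a single odd cycle, where $\mathrm{reg}=0\neq\mathrm{mat}(G)$) is a correct reading of an edge case the paper leaves implicit, since its regularity propositions only treat types 1--3.
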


    \begin{Corollary} \label{main41} Let Let $G$ be a compact graph. Then \begin{enumerate}
  \item  $\mathrm{pdim}(\mathbb{K}[G])=\mathfrak{t}(G)-1$;
  \item  $\mathrm{reg}(\mathbb{K}[G])=\mathrm{mat}(G)$.
\end{enumerate}
\end{Corollary}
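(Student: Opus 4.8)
\textbf{Proof proposal for Corollary~\ref{main41}.}

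The plan is to deduce the corollary from Theorem~\ref{main4} by the standard principle that passing to an initial ideal can only increase (graded) Betti numbers, together with the fact that $\mathbb{K}[G]$ is Cohen--Macaulay whenever $G$ satisfies the odd-cycle condition. Concretely, fix the monomial order $<$ furnished by Theorem~\ref{main4}, write $R=\mathbb{K}[E(G)]$, $S=R/I_G\cong\mathbb{K}[G]$, and $S'=R/\ini_<(I_G)$. By the upper-semicontinuity of Betti numbers under taking initial ideals (see e.g. \cite[Corollary~3.3.3]{HH1} or the standard flat-degeneration argument), one has $\beta_{i,j}^R(S)\le\beta_{i,j}^R(S')$ for all $i,j$; hence $\mathrm{pdim}(S)\le\mathrm{pdim}(S')$ and $\reg(S)\le\reg(S')$. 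Combined with Theorem~\ref{main4}(2) and (4) this gives the two inequalities $\mathrm{pdim}(\mathbb{K}[G])\le\mathfrak t(G)-1$ and $\reg(\mathbb{K}[G])\le\mathrm{mat}(G)$.

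For the reverse inequalities I would argue as follows. Since $G$ (being compact) satisfies the odd-cycle condition, $\mathbb{K}[G]$ is normal by \cite{OH}, hence Cohen--Macaulay by \cite[Theorem~6.3.5]{BH}. Therefore $\mathrm{pdim}(\mathbb{K}[G])=\mathrm{depth}(R)-\mathrm{depth}(S)=r-\dim\mathbb{K}[G]$ by the Auslander--Buchsbaum formula, where $r=|E(G)|$. On the other hand, $\ini_<(I_G)$ has the same Hilbert function as $I_G$, so $\dim S'=\dim S$; since $S'$ is Cohen--Macaulay by Theorem~\ref{main4}(3), $\mathrm{pdim}(S')=r-\dim S'=r-\dim S=\mathrm{pdim}(S)$. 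Thus the inequality $\mathrm{pdim}(\mathbb{K}[G])\le\mathfrak t(G)-1$ is in fact an equality, proving (1). For (2), I would use that $\mathbb{K}[G]$ and $S'$ are both Cohen--Macaulay of the same dimension, that Cohen--Macaulayness forces the regularity to be read off from the last Betti numbers (equivalently, $\reg S=\deg(h\text{-polynomial})$ via the Hilbert series for a Cohen--Macaulay standard graded algebra), and that $S$ and $S'$ share the same Hilbert series. Hence $\reg(\mathbb{K}[G])=\reg(S')=\mathrm{mat}(G)$ by Theorem~\ref{main4}(4).

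The step I expect to require the most care is the passage from ``same Hilbert series and both Cohen--Macaulay'' to ``same regularity'': one must invoke that for a Cohen--Macaulay standard graded quotient $A$ of a polynomial ring, $\reg A$ equals the degree of the numerator polynomial of its Hilbert series (equivalently $\reg A = a(A) + \dim A$ in terms of the $a$-invariant), which is a Hilbert-series invariant and so is preserved by the flat degeneration. The same observation, applied to projective dimension via Auslander--Buchsbaum, is what upgrades the inequality in (1) to an equality; alternatively one can avoid Hilbert-series bookkeeping entirely and note that a flat degeneration of a Cohen--Macaulay algebra to a Cohen--Macaulay algebra of the same dimension preserves both $\mathrm{pdim}$ and the graded Betti numbers in the last homological degree, hence $\reg$ as well. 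Everything else is a direct citation of Theorem~\ref{main4}, \cite{OH}, and \cite{BH}.
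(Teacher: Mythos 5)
Your proposal is correct and follows essentially the same route as the paper: the paper simply cites the standard fact that if $R/\ini_<(I)$ is Cohen--Macaulay then $R/I$ is too and the two share regularity and projective dimension, and then invokes Theorem~\ref{main4}; you have merely unpacked that standard fact via upper semicontinuity of Betti numbers, Auslander--Buchsbaum, and the $h$-polynomial characterization of regularity for Cohen--Macaulay algebras, all of which is sound. (One small slip: the semicontinuity reference should be to Herzog--Hibi's \emph{Monomial Ideals} book, cited as \cite{HH} in this paper, not to \cite{HH1}.)
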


It is known that if $I$ is a graded ideal of a polynomial ring $R$ such that $R/\mathrm{in}_{<}(I)$ is Cohen-Macaulay for some monomial order $<$, then $R/I$ is also Cohen-Macaulay. Furthermore, we have $\mathrm{reg}(R/I)=\mathrm{reg}(R/\mathrm{in}_{<}(I))$ and $\mathrm{pdim}(R/I)=\mathrm{pdim}(R/\mathrm{in}_{<}(I))$. Based on these facts we could see that Corollary~\ref{main41} follows immediately from Theorem~\ref{main4}. Regarding the proof of Theorem~\ref{main4}, we will provide it at the end of this section.

 Assume through this section that $m,n,k$ are positive integers and  $\underline{p}=(p_{1}, \ldots, p_{m})$, $\underline{q}=(q_{1}, \ldots, q_{n})$ and  $\underline{r}=(r_{1}, \ldots, r_{k})$ are integral tuples with positive entries. Also, we write $\underline{p'}=(p_{1}, \ldots, p_{m-1})$, $\underline{q'}=(q_{1}, \ldots, q_{n-1})$, and $\underline{r'}=(r_{1}, \ldots, r_{k-1})$.

\subsection{type one}   In this subsection, we always use the monomial order given in Subsection~\ref{3.1}, and denote the toric ideal of $A_{\underline{p}}$ and its initial ideal  as $I_m$ and $J_m$, respectively. Similarly, $I_{m-1}$ and $J_{m-1}$ represent the toric ideal of $A_{\underline{p'}}$ and its initial ideal   respectively.  Recall from Subsection~\ref{3.1} that $G(J_{m})=\left\{e''_{i}e'_{j}\ |\   1 \leq i < j \leq m \right\}$.

\begin{Proposition}\label{AEK} Denote by $H_{m}$ the monomial ideal $(e''_{1}, \ldots, e''_{m-1})$.
Then $J_{m}=J_{m-1}+e'_{m}H_{m}$ is an E-K splitting. Furthermore $J_{m-1} \cap  e'_{m}H_{m}=e'_{m}J_{m-1}$.

\end{Proposition}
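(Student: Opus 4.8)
The plan is to verify the two conditions of Definition~\ref{DEKSP} directly and then compute the intersection explicitly. I would begin with the combinatorial bookkeeping. Recall that $G(J_m)=\{e''_ie'_j\mid 1\le i<j\le m\}$; splitting this set according to whether the larger index equals $m$ yields $G(J_m)=G(J_{m-1})\sqcup\{e''_ie'_m\mid 1\le i\le m-1\}$, and since the $e''_i$ with $1\le i\le m-1$ involve pairwise disjoint sets of variables they constitute the minimal generating set of $H_m$, so the second piece is precisely $G(e'_mH_m)$. Hence $J_m=J_{m-1}+e'_mH_m$ and the minimal generating sets are disjoint, which is the standing hypothesis of Definition~\ref{DEKSP}. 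The one structural fact I would single out for repeated use is that every variable occurring in $e'_m$ belongs to the $m$-th cycle — hence occurs in none of the generators of $J_{m-1}$, which involve only variables from cycles $1,\dots,m-1$ — and that for each index $i$ the monomials $e'_i$ and $e''_i$ are coprime.

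Next I would establish $J_{m-1}\cap e'_mH_m=e'_mJ_{m-1}$. The inclusion $\supseteq$ is clear, since each generator $e'_me''_ae'_b$ ($a<b\le m-1$) of $e'_mJ_{m-1}$ is a multiple of $e''_ae'_b\in J_{m-1}$ and of $e'_me''_a\in e'_mH_m$. For $\subseteq$, let $w$ be a monomial in the intersection; from $w\in e'_mH_m$ we get $e'_m\mid w$, and from $w\in J_{m-1}$ we get $e''_ae'_b\mid w$ for some $a<b\le m-1$. Because $e'_m$ and $e''_ae'_b$ are coprime by the observation above, $e''_ae'_b$ divides the monomial $w/e'_m$, so $w/e'_m\in J_{m-1}$ and therefore $w\in e'_mJ_{m-1}$. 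In particular $G(J_{m-1}\cap e'_mH_m)=\{e'_me''_ae'_b\mid a<b\le m-1\}$.

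Finally I would exhibit the splitting function: send $w=e'_me''_ae'_b\mapsto(\phi(w),\psi(w))$ with $\phi(w)=e''_ae'_b\in G(J_{m-1})$ and $\psi(w)=e'_me''_a\in G(e'_mH_m)$. Condition (1) is immediate, as $e'_m$ appears only in $\psi(w)$, $e'_b$ only in $\phi(w)$, and $e''_a$ in both, so $\mathrm{lcm}(\phi(w),\psi(w))=e'_me''_ae'_b=w$. For condition (2), given $\emptyset\ne S\subseteq G(J_{m-1}\cap e'_mH_m)$ with elements $e'_me''_{a_t}e'_{b_t}$, coprimality of $e'_m$ with the remaining factors gives $\mathrm{lcm}(S)=e'_m\cdot\mathrm{lcm}_t(e''_{a_t}e'_{b_t})$. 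Then $\mathrm{lcm}(\phi(S))=\mathrm{lcm}_t(e''_{a_t}e'_{b_t})$ is missing the factor $e'_m$ (a nontrivial monomial, as $p_m\ge 1$), and $\mathrm{lcm}(\psi(S))=e'_m\cdot\mathrm{lcm}_t(e''_{a_t})$ is missing every variable of $e'_{b_{t_0}}$ for any fixed $t_0\in S$; so both strictly divide $\mathrm{lcm}(S)$, completing the verification.

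The step I expect to require the most care is the strict divisibility of $\mathrm{lcm}(\psi(S))$ in condition (2): one must check that no variable of $e'_{b_{t_0}}$ reappears in $\mathrm{lcm}(\psi(S))=e'_m\cdot\mathrm{lcm}_t(e''_{a_t})$. Such a variable lies in cycle $b_{t_0}\le m-1$, hence not in $e'_m$, and not in any $e''_{a_t}$ because $e''_{a_t}$ and $e'_{b_{t_0}}$ are coprime — either they come from different cycles, or $a_t=b_{t_0}$ and then they are the even-indexed and odd-indexed parts of the same cycle. It is exactly this last case, together with the coprimality of $e'_i$ and $e''_i$, that one must not overlook; everything else is routine monomial bookkeeping resting on the observation that $e'_m$ is ``new'' relative to $J_{m-1}$.
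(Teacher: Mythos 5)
Your proposal is correct and follows essentially the same route as the paper: the same decomposition $G(J_m)=G(J_{m-1})\sqcup G(e'_mH_m)$, the same identification $J_{m-1}\cap e'_mH_m=e'_mJ_{m-1}$, and the same splitting function $e'_me''_ae'_b\mapsto(e''_ae'_b,\,e'_me''_a)$, with both verifications of condition (2) resting on the coprimality of $e'_m$ with the generators of $J_{m-1}$ and of $e''_i$ with $e'_j$. The only (immaterial) difference is that you verify the intersection by a direct divisibility argument on monomials rather than via lcm's of pairs of generators.
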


\begin{proof}
First of all, it is easy to see that $G(J_{m})=G(J_{m-1})\sqcup G(e'_{m}H_{m})$. Let us check $J_{m-1} \cap  e'_{m}H_{m}=e'_{m}J_{m-1}$. Take any $e_i''e_j'\in G(J_{m-1})$, since $1\leq i<j\leq m-1$, we have $e_m'e_i''e_j'\in e'_{m}H_{m}\cap J_{m-1}$. For the converse, take $e'_me''_{i_1}\in G(e'_mH_{m})$ and $e''_{i_2}e'_j\in G(J_{m-1})$. Here, $1\leq i_1\leq m-1$ and $1\leq i_2<j\leq m-1$. Then, $$\mathrm{lcm}(e'_me''_{i_1}, e''_{i_2}e'_j)\in (e'_me''_{i_2}e'_j)\subseteq e'_mJ_{m-1}.$$
This shows $J_{m-1} \cap  e'_{m}H_{m}=e'_{m}J_{m-1}$.

Next, we define functions $\phi$ and $\psi$ as follows:
\[ \phi: G(e'_{m}J_{m-1}) \to G(J_{m-1}),\quad  e'_{m}e''_{i}e'_{j} \mapsto e''_{i}e'_{j}, \quad 1\leq i<j \leq m-1, \]
\[ \psi: G(e'_{m}J_{m-1}) \to  G(e'_{m}H_{m}), \quad  e'_{m}e''_{i}e'_{j} \mapsto e'_{m}e''_{i}, \quad 1\leq i<j \leq m-1.  \]

(1)  Let $u$ be  a minimal generator of $e_m'J_{m-1}$. Then  $u=e'_{m}e''_{i}e'_{j}$ for some $1\leq i<j \leq m-1$. It follows that
$$\mathrm{lcm}(\phi(u),\psi(u))=\mathrm{lcm}(e''_{i}e'_{j}, e'_{m}e''_{i})=e'_{m}e''_{i}e'_{j}=u.$$

(2) Let $C=e'_{m}(e''_{i_{1}}e'_{j_{1}}, \ldots, e''_{i_{k}}e'_{j_{k}})$ be a non-empty subset of $G(e'_{m}J_{m-1})$, where $1\leq i_{q}<j_{q} \leq m-1$ for $q=1,\ldots,k$. Then
$$\phi(C)=(e''_{i_{1}}e'_{j_{1}}, \ldots, e''_{i_{k}}e'_{j_{k}}) \mbox{ and  } \psi(C)=e'_{m}(e''_{i_{1}}, \ldots, e''_{i_{k}}).$$

Since $e_i''$ and $e_j'$ are co-prime for all $1\leq i,j\leq m$, we have $$\mathrm{lcm}(C)=e'_{m}\mathrm{lcm}(\phi(C)) \mbox{ and } \mathrm{lcm}(C)=\mathrm{lcm}(\psi(C))\mathrm{lcm}(e'_{j_{1}},\ldots,e'_{j_{k}}).$$
This completes the proof.
\end{proof}

In the following, we will utilize the following formula without explicitly referencing it:  For a finitely generated graded module $M$ over a standard graded polynomial ring, one has
$$\max\{j-i\mid \beta_{i-a,j-b}(M)\neq 0\}=\max\{\ell+b-(k+a)\mid \beta_{k,\ell}(M)\neq 0\}=\reg(M)+b-a.$$

\begin{Proposition}\label{reg1} Let $m\geq 2$. Then $\reg(J_m)=\mathrm{mat}(A_{\underline{p}})+1$.
\end{Proposition}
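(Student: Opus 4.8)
The plan is to compute $\reg(J_m)$ by induction on $m$, using the E-K splitting $J_m = J_{m-1} + e'_m H_m$ established in Proposition~\ref{AEK} together with the fact that $J_{m-1}\cap e'_m H_m = e'_m J_{m-1}$. The splitting formula $(*)$ gives $\beta_{i,j}(J_m) = \beta_{i,j}(J_{m-1}) + \beta_{i,j}(e'_m H_m) + \beta_{i-1,j}(e'_m J_{m-1})$, so that
\[
\reg(J_m) = \max\{\reg(J_{m-1}),\ \reg(e'_m H_m),\ \reg(e'_m J_{m-1}) + 1\}.
\]
Since $e'_m$ is a monomial of degree $p_1+\cdots$ — more precisely of degree $p_m+1$, as $e'_m = e_{m,1}e_{m,3}\cdots e_{m,2p_m+1}$ — multiplication by it shifts the resolution, so $\reg(e'_m H_m) = \reg(H_m) + (p_m+1)$ and $\reg(e'_m J_{m-1}) = \reg(J_{m-1}) + (p_m+1)$. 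Because $H_m = (e''_1,\ldots,e''_{m-1})$ is generated by a regular sequence of monomials (the $e''_i$ are pairwise coprime, each of degree $p_i$), its Koszul resolution gives $\reg(H_m) = \sum_{i=1}^{m-1} p_i - (m-2)$; one should double-check this via the standard formula $\reg = \sum(\deg - 1) + 1$ for a complete intersection. Hence the recursion becomes $\reg(J_m) = \max\{\reg(J_{m-1}),\ \sum_{i=1}^{m-1}p_i - (m-2) + p_m + 1,\ \reg(J_{m-1}) + p_m + 2\}$, and the third term dominates the first, so
\[
\reg(J_m) = \max\Big\{\textstyle\sum_{i=1}^{m}p_i - m + 3,\ \reg(J_{m-1}) + p_m + 2\Big\}.
\]

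Next I would identify the matching number. For the base case $m=2$, $J_2 = (e''_1 e'_2)$ is principal of degree $p_1 + p_2 + 1$, so $\reg(J_2) = p_1+p_2+1$; meanwhile a maximum matching of $A_{p_1,p_2}$ (a single odd cycle of length $2p_1+1$ glued at $u$ to an odd cycle of length $2p_2+1$) has size $p_1 + p_2$, since each odd cycle $C_i$ of length $2p_i+1$ contributes a matching of size $p_i$, and one vertex $u$ is shared, preventing any gain. Thus $\reg(J_2) = \mathrm{mat}(A_{p_1,p_2})+1$, establishing the base. For the inductive step one needs $\mathrm{mat}(A_{p_1,\ldots,p_m}) = \max\{\mathrm{mat}(A_{p_1,\ldots,p_{m-1}}) + p_m,\ \sum_{i=1}^{m} p_i - ?\}$; in fact the cleanest route is to prove directly that $\mathrm{mat}(A_{\underline p}) = \big(\sum_{i=1}^m p_i\big) + \min\{1, \text{something}\}$ — more carefully, a maximum matching of $A_{\underline p}$ uses the shared vertex $u$ in at most one of the $m$ cycles, giving $\mathrm{mat}(A_{\underline p}) = \sum_{i=1}^m p_i + 1$ when $m\ge 2$ (the $+1$ coming from an edge at $u$ in one cycle, with the remaining $2p_i-1$ vertices of that cycle matched among themselves using $p_i - ? $ edges — this arithmetic must be checked, but the expected answer is $\mathrm{mat}(A_{\underline p}) = \sum p_i$ for $m=2$ and grows by $p_m$ each time an extra cycle is appended). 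Reconciling this with the recursion above then yields $\reg(J_m) = \mathrm{mat}(A_{\underline p}) + 1$.

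The main obstacle will be the bookkeeping of the two competing maxima: I must verify that the term $\reg(J_{m-1}) + p_m + 2$ genuinely dominates $\sum_{i=1}^m p_i - m + 3$ for all $m \ge 2$ (using the inductive hypothesis $\reg(J_{m-1}) = \mathrm{mat}(A_{\underline{p'}})+1$), and simultaneously that the matching number satisfies the matching recursion $\mathrm{mat}(A_{\underline p}) = \mathrm{mat}(A_{\underline{p'}}) + p_m + 1$ — equivalently that appending one more odd cycle of length $2p_m+1$ at $u$ increases the matching number by exactly $p_m+1$. This is a purely combinatorial claim about matchings in graphs that are bouquets of odd cycles, and proving it carefully (both the lower bound by exhibiting a matching and the upper bound by a counting/parity argument at the cut vertex $u$) is where the real work lies; the homological-algebra half is essentially forced once the E-K splitting and the degree shifts are in place.
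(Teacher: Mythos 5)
Your overall strategy --- induction on $m$ via the E-K splitting $J_m=J_{m-1}+e'_mH_m$ from Proposition~\ref{AEK}, the Koszul computation of $\reg(H_m)$, and matching the result against $\mathrm{mat}(A_{\underline p})$ --- is exactly the paper's, but there is a genuine arithmetic error in the key step that prevents the proof from closing. The splitting formula contributes the term $\beta_{i-1,j}(J\cap K)$ to $\beta_{i,j}(J_m)$; if $\beta_{i-1,j}(J\cap K)\neq 0$ then $j-(i-1)\le\reg(J\cap K)$, i.e.\ $j-i\le\reg(J\cap K)-1$. So the homological shift by one \emph{lowers} the regularity contribution by $1$: the third term in your maximum should be $\reg(e'_mJ_{m-1})-1=\reg(J_{m-1})+p_m$, not $\reg(e'_mJ_{m-1})+1=\reg(J_{m-1})+p_m+2$. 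With your (incorrect) recursion $\reg(J_m)=\reg(J_{m-1})+p_m+2$ dominating, iterating from $\reg(J_2)=p_1+p_2+1$ would give $\reg(J_m)=\sum_{i=1}^m p_i+2m-3$, which is \emph{not} $\mathrm{mat}(A_{\underline p})+1$ for $m\ge 3$; the argument as written therefore cannot prove the statement.

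The second problem is the matching number. One has $\mathrm{mat}(A_{\underline p})=\sum_{i=1}^m p_i$ exactly: a matching restricted to the $i$-th cycle covers at most $2p_i+1$ vertices and hence has at most $p_i$ edges, whether or not it uses the hub $u$, and the bound $\sum p_i$ is attained. So appending a cycle of length $2p_m+1$ increases the matching number by $p_m$, not by $p_m+1$; the recursion $\mathrm{mat}(A_{\underline p})=\mathrm{mat}(A_{\underline{p'}})+p_m+1$ that you call for in the inductive step contradicts your own correct base-case computation $\mathrm{mat}(A_{p_1,p_2})=p_1+p_2$. The two errors do not cancel. Once both are corrected, the recursion reads $\reg(J_m)=\max\{\reg(J_{m-1})+p_m,\ \reg(H_m)+p_m+1\}=\max\{\sum_{i=1}^m p_i+1,\ \sum_{i=1}^m p_i-m+3\}=\sum_{i=1}^m p_i+1=\mathrm{mat}(A_{\underline p})+1$ for $m\ge 2$, which is precisely the paper's proof.
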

\begin{proof}
It is easy to check that  $\mathrm{mat}(A_{\underline{p}})=\sum\limits_{i=1}^{m}p_{i}$.
We proceed with the induction on $m$. If $m=2$, since $J_2$ is generated by a single monomial of degree $p_1+p_2+1$, we obtain $\reg(J_2)=p_1+p_2+1$.

Suppose that $m> 2$. Then,
by Lemma~\ref{AEK}, we have
\begin{equation}\label{ekgraded}\tag{$\clubsuit$}
\beta_{i,j}(J_m)=\beta_{i,j}(J_{m-1})+\beta_{i, j-p_m-1}(H_m)+\beta_{i-1,j-p_m-1}(J_{m-1}).
\end{equation}  It follows that
\begin{equation*} \begin{split} \reg (J_m)&=\max\{j-i \mid \beta_{i,j}(J_{m-1})+ \beta_{i-1,j-p_m-1}(J_{m-1}) + \beta_{i,j-p_m-1}(H_m)\neq 0\}\\ &=\max\{\reg(J_{m-1}), \reg(J_{m-1})+p_m, \reg(H_m)+p_m+1\}.\end{split}
\end{equation*}
Note that $H_m$ is generated by a regular sequence of degrees $p_1,\ldots, p_{m-1}$. By using the Koszul theory,  we obtain $\reg(H_m)=\sum\limits_{i=1}^{m-1}p_i-m+2$. Hence, \begin{equation*}\begin{split}\reg(J_m)&=\max\{\sum\limits_{i=1}^{m}p_i+1,\sum\limits_{i=1}^{m}p_i-m+3\}\\&=\sum\limits_{i=1}^{m}p_i+1,  \end{split}
\end{equation*} as desired.
\end{proof}

\begin{Proposition}\label{total1} Let $m\geq 2$. Then $\beta_i(J_m)=(i+1)\binom{m}{i+2}$ for all $i\geq 0$. In particular, $\mathrm{pdim}(J_m)=m-2$.
\end{Proposition}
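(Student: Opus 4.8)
The natural approach is induction on $m$, using the E-K splitting established in Proposition~\ref{AEK}, which gives $\beta_i(J_m) = \beta_i(J_{m-1}) + \beta_i(e'_m H_m) + \beta_{i-1}(e'_m J_{m-1})$ for all $i \geq 0$. Since multiplication by the monomial $e'_m$ is an exact functor on free resolutions (it merely shifts degrees), we have $\beta_i(e'_m H_m) = \beta_i(H_m)$ and $\beta_{i-1}(e'_m J_{m-1}) = \beta_{i-1}(J_{m-1})$. Thus the recursion becomes
\begin{equation*}
\beta_i(J_m) = \beta_i(J_{m-1}) + \beta_{i-1}(J_{m-1}) + \beta_i(H_m).
\end{equation*}
The base case $m=2$ is immediate: $J_2$ is principal, so $\beta_0(J_2) = 1 = \binom{2}{2}$ and $\beta_i(J_2) = 0$ for $i > 0$, matching $(i+1)\binom{2}{i+2}$.

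For the inductive step, I would first record that $H_m = (e''_1, \ldots, e''_{m-1})$ is generated by a regular sequence (the $e''_i$ are pairwise coprime monomials), so its resolution is the Koszul complex and $\beta_i(H_m) = \binom{m-1}{i+1}$ for $i \geq 0$. Substituting the induction hypothesis $\beta_i(J_{m-1}) = (i+1)\binom{m-1}{i+2}$ and $\beta_{i-1}(J_{m-1}) = i\binom{m-1}{i+1}$ into the recursion, the claim reduces to the binomial identity
\begin{equation*}
(i+1)\binom{m-1}{i+2} + i\binom{m-1}{i+1} + \binom{m-1}{i+1} = (i+1)\binom{m}{i+2},
\end{equation*}
i.e. $(i+1)\binom{m-1}{i+2} + (i+1)\binom{m-1}{i+1} = (i+1)\binom{m}{i+2}$, which is just $(i+1)$ times Pascal's rule. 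This is a routine verification. Finally, $\mathrm{pdim}(J_m) = \max\{i : \beta_i(J_m) \neq 0\} = \max\{i : \binom{m}{i+2} \neq 0\} = m-2$.

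\textbf{Main obstacle.} There is essentially no deep obstacle here; the work was done in Proposition~\ref{AEK}. The one point requiring a little care is the justification that tensoring with the principal ideal $(e'_m)$ does not change total Betti numbers — concretely, that $e'_m J_{m-1} \cong J_{m-1}(-\deg e'_m)$ and $e'_m H_m \cong H_m(-\deg e'_m)$ as graded modules, so that the minimal free resolutions agree up to a degree shift. This is standard since $e'_m$ is a nonzerodivisor on the polynomial ring, but it should be stated. Everything else is the Koszul computation for $H_m$ and the Pascal identity.
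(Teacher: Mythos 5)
Your proof is correct and follows essentially the same route as the paper: induction on $m$ via the E-K splitting of Proposition~\ref{AEK}, the Koszul computation $\beta_i(H_m)=\binom{m-1}{i+1}$, and Pascal's rule. The only difference is that you explicitly justify why multiplication by $e'_m$ preserves total Betti numbers, a point the paper uses silently.
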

\begin{proof} We also employ the induction on $m$. The case that $m=2$ or $i=0$ is straightforward. If $m\geq 3$ and $i\geq 1$, then, by noting that the formula $\binom{m}{i}=\binom{m-1}{i-1}+\binom{m-1}{i}$ holds for all $m\geq 1$ and $i\geq 1$, we have
\begin{equation*}\begin{split} \beta_{i}(J_m)&=\beta_{i}(J_{m-1})+\beta_i(H_m)+\beta_{i-1}(J_{m-1})
\\&=(i+1)\binom{m-1}{i+2}+\binom{m-1}{i+1}+i\binom{m-1}{i+1}\\&=(i+1)\binom{m}{i+2},
\end{split}\end{equation*} as desired.
\end{proof}

We may compute the graded Betti numbers of $J_m$ in a special case.

\begin{Proposition} \label{Betti}If $p_1=\cdots=p_m=p$, then for all $i\geq 0$, we have  $$\beta_{i,j}(J_m)=\left\{
                                                                   \begin{array}{ll}
                                                                     \binom{m}{i+2}, & \hbox{$j=(i+2)p+\ell, \ell=1,\ldots,i+1$;} \\
                                                                     0, & \hbox{otherwise.}
                                                                   \end{array}
                                                                 \right.
$$
\end{Proposition}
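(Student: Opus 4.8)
The plan is to induct on $m$, using the recursion for graded Betti numbers that comes from the E-K splitting established in Proposition~\ref{AEK}. Recall from that proposition that $J_m = J_{m-1} + e'_m H_m$ is an E-K splitting with $J_{m-1}\cap e'_m H_m = e'_m J_{m-1}$, and that this yields
\begin{equation*}
\beta_{i,j}(J_m)=\beta_{i,j}(J_{m-1})+\beta_{i,j-p-1}(H_m)+\beta_{i-1,j-p-1}(J_{m-1}),
\end{equation*}
where I have already specialized $p_m=p$ (so each $e'_i$ has degree $p+1$ and each $e''_i$ has degree $p$). Note that in the special case $p_1=\cdots=p_m=p$ the ideal $H_m=(e''_1,\ldots,e''_{m-1})$ is generated by a regular sequence of $m-1$ forms all of the same degree $p$, so its minimal free resolution is the Koszul complex and $\beta_{i,j}(H_m)=\binom{m-1}{i+1}$ when $j=(i+1)p$ and is $0$ otherwise.

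First I would settle the base case $m=2$: here $J_2=(e''_1 e'_2)$ is principal, generated in degree $p+(p+1)=2p+1$, so $\beta_{0,2p+1}(J_2)=1=\binom{2}{2}$ and all other Betti numbers vanish; this matches the claimed formula (with $i=0$, the only value $\ell=1$, $j=2p+1$). For the inductive step, assume the formula holds for $J_{m-1}$, i.e. $\beta_{i,j}(J_{m-1})=\binom{m-1}{i+2}$ exactly when $j=(i+2)p+\ell$ for $\ell=1,\ldots,i+1$. I then substitute the three contributions into the displayed recursion and track the degrees carefully:
\begin{itemize}
\item $\beta_{i,j}(J_{m-1})$ contributes $\binom{m-1}{i+2}$ in degrees $j=(i+2)p+\ell$, $\ell=1,\ldots,i+1$;
\item $\beta_{i,j-p-1}(H_m)$ contributes $\binom{m-1}{i+1}$ in the single degree $j=(i+1)p+(p+1)=(i+2)p+1$;
\item $\beta_{i-1,j-p-1}(J_{m-1})$ contributes $\binom{m-1}{i+1}$ in degrees $j-p-1=(i+1)p+\ell$, i.e. $j=(i+2)p+(\ell+1)$, for $\ell=1,\ldots,i$, which is $j=(i+2)p+\ell'$ for $\ell'=2,\ldots,i+1$.
\end{itemize}
Adding these up degree by degree: for $j=(i+2)p+1$ we get $\binom{m-1}{i+2}+\binom{m-1}{i+1}=\binom{m}{i+2}$; for $j=(i+2)p+\ell'$ with $2\le \ell'\le i+1$ we get $\binom{m-1}{i+2}+\binom{m-1}{i+1}=\binom{m}{i+2}$; and for all other $j$ the total is $0$. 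This is exactly the asserted formula for $J_m$, completing the induction.

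The only subtlety — and the one place I would be careful rather than the genuinely hard part, since there is no deep obstacle here — is making sure the degree bookkeeping is exact: one must verify that the degree ranges of the three summands overlap in precisely the right way so that Pascal's identity $\binom{m-1}{i+2}+\binom{m-1}{i+1}=\binom{m}{i+2}$ can be applied uniformly across all $\ell=1,\ldots,i+1$, with no ``boundary'' degree receiving only one of the two binomial terms. The computation above shows the $H_m$ term exactly fills in the $\ell=1$ slot (which the shifted copy $\beta_{i-1,\,\cdot\,}(J_{m-1})$ misses) while the shifted copy fills in $\ell=2,\ldots,i+1$ (which matches the range where $\beta_{i,\,\cdot\,}(J_{m-1})$ also lives), so every slot $\ell=1,\ldots,i+1$ gets both contributions. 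I would also double-check the edge case $i=0$ separately (where the $\beta_{i-1}$ term vanishes by convention and $H_m$ alone must account for $\beta_{0,2p+1}(J_m)=\binom{m}{2}$, which indeed equals $\binom{m-1}{2}+\binom{m-1}{1}$), to confirm the formula is internally consistent at the start of the resolution.
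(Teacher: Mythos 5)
Your proof is correct and follows essentially the same route as the paper: induction on $m$ via the graded Betti number recursion from the E-K splitting $J_m=J_{m-1}+e'_mH_m$, with the Koszul resolution of $H_m=(e''_1,\ldots,e''_{m-1})$ supplying the middle term and Pascal's identity closing the induction. Your degree bookkeeping (the $H_m$ term filling the $\ell=1$ slot and the shifted copy of $J_{m-1}$ filling $\ell=2,\ldots,i+1$) is exactly the content the paper leaves implicit, and your separate check of the $i=0$ and $m=2$ cases matches the paper's base cases.
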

\begin{proof} We use the induction on $m$. The case that $m=2$ or $i=0$ are obvious.
So we suppose $m\geq3$ and $i\geq1$. By the induction hypothesis we have
$$\beta_{i,j}(J_{m-1})=\left\{
                                                                   \begin{array}{ll}
                                                                     \binom{m-1}{i+2}, & \hbox{$j=(i+2)p+\ell, \ell=1,\ldots,i+1$;} \\
                                                                     0, & \hbox{otherwise}
                                                                   \end{array}
                                                                 \right.
$$
and so
$$\beta_{i-1,j-p-1}(J_{m-1})=\left\{
                                                                   \begin{array}{ll}
                                                                     \binom{m-1}{i+1}, & \hbox{$j=(i+2)p+\ell+1, \ell=1,\ldots,i$;} \\
                                                                     0, & \hbox{otherwise.}
                                                                   \end{array}
                                                                 \right.
$$

  On the other hand, by the theory of Koszul complex,  we have $$\beta_{i,j-p-1}(H_m)=\left\{
                                             \begin{array}{ll}
                                               \binom{m-1}{i+1}, & \hbox{$j=(i+2)p+1$;} \\
                                               0, & \hbox{otherwise.}
                                             \end{array}
                                           \right.
$$

Now, the result follows by applying the equality~($\clubsuit$).
\end{proof}

\subsection{Type two }In this subsection, we denote the toric ideal of $B^{s}_{\underline{p}:\underline{q}}$ and its initial ideal   as $I^{s}_{{m:n}}$ and $J^{s}_{{m:n}}$   respectively. Similarly, $I^{s}_{{m:n-1}}$ and $J^{s}_{{m:n-1}}$ represent the toric ideal and its initial ideal  of $B^{s}_{\underline{p}:\underline{q'}}$ respectively. Here, we  use the monomial order given in Subsection~\ref{3.2}. The distinction between the case when $s>0$ and the case when $s=0$ is significant. Let us first consider the case when $s>0$. Recall from Subsection~\ref{3.2} that $G(J_{m:n}^s)$ is the set $\left\{e''_{i}e'_{j}\ |\   1 \leq i < j \leq m \right\}\sqcup \left\{f''_{i}f'_{j}\ |\   1 \leq i < j \leq n \right\} \sqcup \left\{ e'_{i}f'_{j}\ |\   1 \leq i \leq m,1 \leq j \leq n \right\} \sqcup \left\{ e'_{i}x''\ |\   1 \leq i \leq m \right\}\sqcup \left\{ f'_{i}x'\ |\   1 \leq i \leq n \right\}.$

\begin{Proposition}\label{BSEKSP} Denote by $H^{s}_{m:n}$ the monomial ideal $(e'_{1}, \ldots, e'_{m},f''_{1}, \ldots, f''_{n-1},x')$. Then
$J^{s}_{m:n}=J^{s}_{m:n-1}+f'_{n}H^{s}_{m:n}$ is an E-K splitting. Furthermore, $J^{s}_{m:n-1} \cap f'_{n}H^{s}_{m:n}=f'_{n}J^{s}_{m:n-1}$.

\end{Proposition}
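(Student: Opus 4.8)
\textbf{Proof proposal for Proposition~\ref{BSEKSP}.}

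The plan is to mirror closely the argument already carried out for type-one graphs in Proposition~\ref{AEK}, since the monomial order in Subsection~\ref{3.2} was chosen precisely so that the generators involving $f'_n$ can be ``peeled off''. First I would verify the disjoint-union claim $G(J^s_{m:n})=G(J^s_{m:n-1})\sqcup G(f'_n H^s_{m:n})$. Looking at the list of generators of $G(J^s_{m:n})$ recalled just before the statement, the monomials that involve the variables $f_{n,1},\dots,f_{n,2q_n+1}$ (equivalently, those divisible by $f'_n$ or $f''_n$) are exactly $\{f''_i f'_n\mid 1\le i<n\}$, $\{e'_i f'_n\mid 1\le i\le m\}$, and $\{f'_n x'\}$; and each of these is $f'_n$ times one of $f''_i$ ($i<n$), $e'_i$ ($i\le m$), or $x'$, i.e.\ $f'_n$ times a generator of $H^s_{m:n}$. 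The remaining generators are precisely $G(J^s_{m:n-1})$. (Note $f''_i f'_n$ needs a little care: it equals $f'_n\cdot f''_i$, and $f''_i\in G(H^s_{m:n})$ for $i\le n-1$; this is why $H^s_{m:n}$ contains the $f''_i$ with $i\le n-1$ rather than all $e''_i$.) One should also check none of these $f'_n$-multiples lies in $J^s_{m:n-1}$ and vice versa, which is immediate since the former are divisible by $f'_n$ and the latter are not, and that $f'_n H^s_{m:n}$ is minimally generated by $f'_n$ times the generators of $H^s_{m:n}$ (true because $e'_1,\dots,e'_m,f''_1,\dots,f''_{n-1},x'$ are pairwise coprime, hence a minimal generating set, being squarefree in disjoint variable sets).

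Next I would compute the intersection $J^s_{m:n-1}\cap f'_n H^s_{m:n}$ and show it equals $f'_n J^s_{m:n-1}$. The inclusion $\supseteq$ holds because every generator $w$ of $J^s_{m:n-1}$ is divisible by one of the generators of $H^s_{m:n}$ — indeed each generator of $J^s_{m:n-1}$ is a product of two of the ``half-edge'' monomials, and in every one of the five families ($e''_ie'_j$, $f''_if'_j$ with $j\le n-1$, $e'_if'_j$ with $j\le n-1$, $e'_ix''$, $f'_ix'$) at least one factor, or the whole monomial in the $x''$ case via $e'_i$, lies among $\{e'_\bullet, f''_\bullet (\le n-1), x'\}$; hence $f'_n w\in f'_n H^s_{m:n}\cap J^s_{m:n-1}$. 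For $\subseteq$, a monomial in $f'_n H^s_{m:n}\cap J^s_{m:n-1}$ is $\mathrm{lcm}(f'_n h, w)$ for $h\in G(H^s_{m:n})$ and $w\in G(J^s_{m:n-1})$; since $f'_n$ is coprime to $w$ (it uses only the $f_{n,\bullet}$ variables, which do not appear in $J^s_{m:n-1}$), this lcm is divisible by $f'_n w\in f'_n J^s_{m:n-1}$. This is routine but requires running through the handful of generator types; I do not anticipate any real difficulty, only bookkeeping.

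Finally I would exhibit the E-K splitting functions. Set $\phi\colon G(f'_n J^s_{m:n-1})\to G(J^s_{m:n-1})$, $f'_n w\mapsto w$, and $\psi\colon G(f'_n J^s_{m:n-1})\to G(f'_n H^s_{m:n})$, $f'_n w\mapsto f'_n h_w$, where $h_w$ is \emph{one fixed} choice of a generator of $H^s_{m:n}$ dividing $w$ (to make $\psi$ well-defined I must commit to a rule, e.g.\ for $w=e''_ie'_j$ take $h_w=e'_j$; for $w=f''_if'_j$ ($i<j\le n-1$) take $h_w=f''_i$; for $w=e'_if'_j$ take $h_w=e'_i$; for $w=e'_ix''$ take $h_w=e'_i$; for $w=f'_ix'$ take $h_w=x'$). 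Condition (1) of Definition~\ref{DEKSP}, $w'=\mathrm{lcm}(\phi(w'),\psi(w'))$ for $w'=f'_n w$, follows because $f'_n h_w$ contributes exactly the missing $f'_n$ and $h_w\mid w$. Condition (2) — that for every nonempty $S\subseteq G(f'_n J^s_{m:n-1})$ both $\mathrm{lcm}(\phi(S))$ and $\mathrm{lcm}(\psi(S))$ strictly divide $\mathrm{lcm}(S)$ — is where I expect the only mild subtlety: $\mathrm{lcm}(\phi(S))$ misses the factor $f'_n$ present in $\mathrm{lcm}(S)$, so that is strict; for $\mathrm{lcm}(\psi(S))=f'_n\cdot\mathrm{lcm}\{h_w\mid f'_n w\in S\}$, I need that the $h_w$'s collectively miss some variable occurring in $\mathrm{lcm}(S)$. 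This holds because each $w$ is a product of two coprime half-edge monomials and $h_w$ is only one of them (or, in the $e'_ix''$ case, $h_w=e'_i$ while $x''$ and $x'$ are disjoint from all $e'$ and $f''$), so for any single $w$ the variable set of $h_w$ is properly contained in that of $w$, and one checks this persists after taking lcm over $S$ by tracking, for a fixed $w_0\in S$, a variable of $w_0$ outside $h_{w_0}$ and outside every $h_w$ — using the coprimeness/disjointness of the half-edge blocks. I would phrase this exactly as in the proof of Proposition~\ref{AEK}, where the analogous fact ``$e''_i$ and $e'_j$ are coprime'' did the job; here the relevant coprimeness is among $\{e'_\bullet\}$, $\{e''_\bullet\}$, $\{f'_\bullet\}$, $\{f''_\bullet\}$, $x'$, $x''$ (all supported on pairwise disjoint sets of edge-variables), which makes the strict-divisibility check go through verbatim.
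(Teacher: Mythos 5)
Your proposal is correct and follows essentially the same route as the paper: the same decomposition $J^s_{m:n}=J^s_{m:n-1}+f'_nH^s_{m:n}$, the same identification of the intersection with $f'_nJ^s_{m:n-1}$, and the identical splitting functions $\phi$ and $\psi$ (your case-by-case choice of $h_w$ matches the paper's rules exactly). The only difference is that you spell out the verifications of conditions (1) and (2) via the disjointness of the supports of the primed and double-primed blocks, which the paper dismisses as routine.
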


\begin{proof}

First of all, it is routine  to see that $G(J^{s}_{m:n})=G(J^{s}_{m:n-1})\bigsqcup G(f'_{n}H^{s}_{m:n})$ and $J^{s}_{m:n-1} \cap f'_{n}H^{s}_{m:n}=f'_{n}J^{s}_{m:n-1}$.

Let us define a function $\phi :G(f'_{n}J^{s}_{{m:n-1}}) \to G(J^{s}_{{m:n-1}})$ that sends $f'_{n}u$ to $u$ for all $u\in G(J^{s}_{{m:n-1}})$.

Similarly, we define a function $\psi :G(f'_{n}J^{s}_{{m:n-1}}) \to  G(f'_{n}H^{s}_{m:n})$ using the following rules:
\begin{itemize}
  \item $f'_{n}e''_{i}e'_{j} \mapsto f'_{n}e'_{j}$ for all $1\leq i<j\leq m$ and $f'_{n}f''_{i}f'_{j} \mapsto f'_{n}f''_{i}$ for all $1\leq i<j\leq n-1$;
  \item $f'_{n}e'_{i}f'_{j} \mapsto f'_{n}e'_{i}$ for all $1\leq i\leq m$ and $1\leq j\leq n-1$;
  \item $f'_{n}e'_{i}x'' \mapsto f'_{n}e'_{i}$ for all $1\leq i \leq m$ and $f'_{n}f'_{i}x' \mapsto f'_{n}x'$ for all $1\leq i\leq n-1$.
\end{itemize}

It is routine to check that conditions (1) and (2) of Definition~\ref{DEKSP} are satisfied, thus confirming that it is indeed an E-K splitting.
\end{proof}

If $s=0$, then $G(J_{m:n}^0)$ is the disjoint union $\left\{ e'_{i}f'_{j}\ |\   1 \leq i \leq m,1 \leq j \leq n \right\} \cup \left\{e''_{i}e'_{j}\ |\   1 \leq i < j \leq m \right\}\cup \left\{f''_{i}f'_{j}\ |\   1 \leq i < j \leq n \right\}$. Similarly, we obtain the following.

\begin{Proposition}\label{BoEKSP}Denote by $H_{m:n}^0$ the monomial ideal $(e'_{1}, \ldots, e'_{m},f''_{1}, \ldots, f''_{n-1})$. Then
$J^{0}_{m:n}=J^{0}_{m:n-1}+f'_{n}H^0_{m:n}$ is an E-K splitting, and $J^{0}_{m:n-1}\cap f'_{n}H^0_{m:n}=f'_{n}J^{0}_{m:n}$.

\end{Proposition}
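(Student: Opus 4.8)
The plan is to establish Proposition~\ref{BoEKSP} by mimicking, in a slightly simpler setting, the argument already carried out for Proposition~\ref{BSEKSP}; the case $s=0$ is genuinely easier because there are no variables $x', x''$ and no generators involving them, so only the three families $\mathcal M_1,\mathcal M_2,\mathcal M_3$ survive. First I would verify the disjoint-union claim $G(J^{0}_{m:n})=G(J^{0}_{m:n-1})\sqcup G(f'_{n}H^{0}_{m:n})$: the generators of $J^{0}_{m:n}$ split into those not involving $f'_n$ or $f''_n$ — which are precisely the generators of $J^{0}_{m:n-1}$, namely $e''_ie'_j$ ($i<j\le m$), $f''_if'_j$ ($i<j\le n-1$), and $e'_if'_j$ ($i\le m$, $j\le n-1$) — and those that do involve the $n$-th $v$-cycle, namely $f''_if'_n$ ($i<n$) and $e'_if'_n$ ($i\le m$); the latter set is exactly $f'_n\cdot\{f''_1,\dots,f''_{n-1},e'_1,\dots,e'_m\} = f'_n\cdot G(H^0_{m:n})$, and one checks these lists are disjoint (no generator of $J^0_{m:n-1}$ is divisible by $f'_n$ since the $f'_n$-variables are coprime to everything appearing there).

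Next I would compute the intersection $J^{0}_{m:n-1}\cap f'_{n}H^0_{m:n}$. Since $f'_n=f_{n,1}f_{n,3}\cdots f_{n,2q_n+1}$ is a squarefree monomial in variables disjoint from those occurring in any generator of $J^0_{m:n-1}$, for a monomial $w$ divisible by both $f'_n$ and some generator $g$ of $J^0_{m:n-1}$ we have $f'_n g \mid w$, which gives $J^0_{m:n-1}\cap f'_n H^0_{m:n}\supseteq f'_n J^0_{m:n-1}$ (note $J^0_{m:n-1}\subseteq H^0_{m:n}$ as ideals, since $e'_if'_j\in(e'_i)$, $e''_ie'_j\in(e'_j)$ and $f''_if'_j\in(f''_i)$ for the relevant index ranges); the reverse containment follows because any monomial in $f'_n H^0_{m:n}$ lying in $J^0_{m:n-1}$ must already be divisible by $f'_n$ (again by coprimality) and hence by $f'_n$ times a generator of $J^0_{m:n-1}$. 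This yields $J^{0}_{m:n-1}\cap f'_{n}H^0_{m:n}=f'_{n}J^{0}_{m:n-1}$. (I note that the statement as printed says $f'_nJ^0_{m:n}$, which appears to be a typo for $f'_nJ^0_{m:n-1}$, since $J^0_{m:n}\not\subseteq J^0_{m:n-1}$; I would state and prove the corrected identity.)

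Then I would exhibit the splitting function. Following Proposition~\ref{BSEKSP}, set $\phi\colon G(f'_n J^0_{m:n-1})\to G(J^0_{m:n-1})$, $f'_n u\mapsto u$, and define $\psi\colon G(f'_n J^0_{m:n-1})\to G(f'_n H^0_{m:n})$ by $f'_n e''_i e'_j\mapsto f'_n e'_j$ (for $i<j\le m$), $f'_n f''_i f'_j\mapsto f'_n f''_i$ (for $i<j\le n-1$), and $f'_n e'_i f'_j\mapsto f'_n e'_i$ (for $i\le m$, $j\le n-1$). For condition (1) of Definition~\ref{DEKSP}: in each of the three cases $\mathrm{lcm}(\phi(w),\psi(w))=w$ because, e.g., $\mathrm{lcm}(e''_ie'_j, f'_ne'_j)=f'_ne''_ie'_j$, using that the variables in $e'_i,e''_i,f'_j,f''_j,f'_n$ for distinct cycles (and the two "parities" within one cycle) are pairwise coprime. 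For condition (2): given $\emptyset\ne S\subseteq G(f'_n J^0_{m:n-1})$, write $S=f'_n S'$ with $S'\subseteq G(J^0_{m:n-1})$; then $\mathrm{lcm}(\phi(S))=\mathrm{lcm}(S')$ and $\mathrm{lcm}(S)=f'_n\mathrm{lcm}(S')$, so $\mathrm{lcm}(\phi(S))$ strictly divides $\mathrm{lcm}(S)$ because $f'_n$ does not divide $\mathrm{lcm}(S')$; and $\mathrm{lcm}(\psi(S))$ strictly divides $\mathrm{lcm}(S)$ because $\psi(w)$ always drops at least one variable from $w$ (the unmatched "partner" factor $e'_j$ or $f'_j$ coming from the paired generator), and this dropped variable does not reappear in $f'_n$ or in the kept factor.

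The main obstacle is purely bookkeeping: keeping careful track of which variables are coprime across the different families $\mathcal M_1,\dots,\mathcal M_5$ (here only $\mathcal M_1,\mathcal M_2,\mathcal M_3$) and verifying that $\psi$ genuinely strictly decreases the $\mathrm{lcm}$ over every subset $S$, not merely over singletons — i.e. that some variable is lost globally. Since $f'_n$ is a fixed squarefree monomial common to every element of $f'_n J^0_{m:n-1}$ and $\psi$ factors through "strip off $f'_n$, then strip off one more factor", the strictness of both divisibilities reduces to the single observation that $\mathrm{lcm}(S')$ is never divisible by $f'_n$ (true since $S'\subseteq G(J^0_{m:n-1})$ and those generators involve none of the $f_{n,*}$ variables). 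So once the coprimality structure is laid out cleanly, the verification is routine, exactly as in Proposition~\ref{AEK} and Proposition~\ref{BSEKSP}; I would simply remark that the argument is "analogous to the proof of Proposition~\ref{BSEKSP}" and fill in only the two or three points where the $s=0$ case differs.
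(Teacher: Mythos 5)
Your proposal is correct and follows exactly the route the paper intends: the paper omits the proof of Proposition~\ref{BoEKSP}, stating only that it is obtained ``similarly'' to Proposition~\ref{BSEKSP}, and your argument is precisely that adaptation with the $x'$, $x''$ families removed. Your observation that the displayed intersection should read $f'_{n}J^{0}_{m:n-1}$ rather than $f'_{n}J^{0}_{m:n}$ is also correct; this is a typo in the statement, consistent with Propositions~\ref{AEK}, \ref{BSEKSP} and \ref{CEKSP}.
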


\begin{Proposition} \label{total2} Let $s\geq 0$ be an even number, $m,n\geq1$. Then for all $i\geq 0$, we have
$$\beta_{i}(J^{s}_{m:n})=\left\{
                                                                   \begin{array}{ll}
                                                                     (i+1)\binom{m+n}{i+2}, & \hbox{$s=0$;} \\
                                                                     (i+1)\binom{m+n+1}{i+2}, & \hbox{$s>0$}.
                                                                   \end{array}
                                                                 \right.
$$

\end{Proposition}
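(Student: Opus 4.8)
The plan is to prove the formula by induction on $n$ (the number of odd cycles attached at the second big vertex), treating the two cases $s=0$ and $s>0$ in parallel and using the E-K splittings established in Propositions~\ref{BSEKSP} and \ref{BoEKSP} together with the additivity formula $(*)$ for total Betti numbers. For the base case $n=1$, the graph $B^{s}_{\underline p:\underline q}$ with a single odd cycle at $v$ is (essentially, after removing degree-one vertices irrelevant to $I_B$) a compact graph of type~1 when $s=0$ — indeed $J^{0}_{m:1}$ has generating set $\{e'_if'_1 \mid 1\le i\le m\}\cup\{e''_ie'_j\mid 1\le i<j\le m\}$, which is exactly $G(J_{m+1})$ after the relabeling $f'_1\leftrightarrow e'_{m+1}$, $f''_1\leftrightarrow e''_{m+1}$ — so Proposition~\ref{total1} gives $\beta_i(J^0_{m:1})=(i+1)\binom{m+1}{i+2}$, matching the claim. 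When $s>0$ and $n=1$, the extra path contributes the generators $e'_ix''$ and $f'_1x'$, and a direct check (or a single application of the $s>0$ splitting reducing to the $n=0$ situation, handled symmetrically by swapping the roles of $\underline p$ and $\underline q$) should identify $J^s_{m:1}$ with $J_{m+2}$ up to relabeling, where the variable $x$ plays the role of the "$(m+2)$-th cycle of length $1$"; this yields $(i+1)\binom{m+2}{i+2}=(i+1)\binom{m+n+1}{i+2}$.

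For the inductive step, assume the formula for $J^{s}_{m:n-1}$. The splitting gives, for all $i\ge 0$,
\[
\beta_i(J^s_{m:n})=\beta_i(J^s_{m:n-1})+\beta_i(f'_nH^s_{m:n})+\beta_{i-1}(f'_nJ^s_{m:n-1}).
\]
Since multiplication by the single variable-free monomial $f'_n$ (which is coprime to everything in $H^s_{m:n}$ and in $J^s_{m:n-1}$) does not change total Betti numbers, the last summand is $\beta_{i-1}(J^s_{m:n-1})$, which by induction equals $i\binom{m+n-1+\varepsilon}{i+1}$ where $\varepsilon=0$ if $s=0$ and $\varepsilon=1$ if $s>0$. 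The middle term $\beta_i(f'_nH^s_{m:n})=\beta_i(H^s_{m:n})$ requires knowing the total Betti numbers of the ideal $H^s_{m:n}=(e'_1,\ldots,e'_m,f''_1,\ldots,f''_{n-1},x')$ (resp. without the $x'$ generator when $s=0$). The key observation here is that these generators are pairwise coprime monomials — the $e'_i$'s involve only $e$-variables, the $f''_i$'s only $f$-variables, and $x'$ only $x$-variables — hence $H^s_{m:n}$ is generated by a regular sequence of length $m+n-1+\varepsilon$, so by the Koszul resolution (more precisely, the Taylor/Koszul complex of a regular sequence of monomials) $\beta_i(H^s_{m:n})=\binom{m+n-1+\varepsilon}{i+1}$. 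Substituting,
\[
\beta_i(J^s_{m:n})=(i+1)\binom{m+n-1+\varepsilon}{i+2}+\binom{m+n-1+\varepsilon}{i+1}+i\binom{m+n-1+\varepsilon}{i+1},
\]
and the same Pascal-rule manipulation used in the proof of Proposition~\ref{total1} (grouping the last two terms into $(i+1)\binom{m+n-1+\varepsilon}{i+1}$ and then adding to the first) collapses this to $(i+1)\binom{m+n+\varepsilon}{i+2}$, which is precisely the asserted value.

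The main obstacle I anticipate is not the inductive bookkeeping — that is a routine replay of Proposition~\ref{total1} — but rather nailing down the two base cases cleanly, especially the $s>0$, $n=1$ case, where one must verify that the generating set of $J^s_{m:1}$ really is isomorphic (as a monomial ideal, up to renaming variables) to $G(J_{m+2})$, including checking that the "two big vertices joined by an edge and an even path" structure contributes exactly one extra coprime generator beyond the type-1 picture. An alternative, perhaps cleaner, route for the base case is to run the induction in the \emph{other} direction first: establish the analogue of Proposition~\ref{BSEKSP} with the roles of $m$ and $\underline p$ played by $n$ and $\underline q$, reduce $J^s_{m:n}$ all the way down to $J^s_{1:1}$ or even to the degenerate $J^s_{0:0}$, and compute that tiny case by hand; then the symmetric bidirectional induction pins down the constant $\varepsilon$ correctly. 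One should also double-check the edge case interactions when $m=1$ or $n=1$ so that the ideal $H^s_{m:n}$ genuinely has the stated number of generators (e.g. when $n=1$ the list $f''_1,\ldots,f''_{n-1}$ is empty), but this only affects the binomial coefficients through the value of $m+n-1+\varepsilon$ and is consistent with the formula.
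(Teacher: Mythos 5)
Your proposal is correct and follows essentially the same route as the paper: induction on $n$ via the E-K splittings of Propositions~\ref{BSEKSP} and \ref{BoEKSP}, the Koszul count $\beta_i(H^s_{m:n})=\binom{m+n-1+\varepsilon}{i+1}$ for the regular sequence of pairwise coprime generators, and the Pascal-identity collapse; the paper merely anchors the $s>0$ induction one step earlier, at $J^s_{m:0}\cong J_{m+1}$ (via $e''_0:=x''$), rather than at your $J^s_{m:1}\cong J_{m+2}$ identification, but both base cases check out.
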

\begin{proof} We consider the following two cases.

\hspace{-0.5cm}$\textit{Case s=0:}$
We also employ the induction on $n$. The case that $m=n=1$ or $i=0$ is straightforward. If $m+n\geq 3$ and $i\geq 1$, then, we have

\begin{equation*}\begin{split} \beta_{i}(J^0_{m:n})&=\beta_{i}(J^0_{m:n-1})+\beta_i(H^0_{m:n})+\beta_{i-1}(J^0_{m:n-1})
\\&=(i+1)\binom{m+n-1}{i+2}+i\binom{m+n-1}{i+1}+\binom{m+n-1}{i+1}\\&=(i+1)\binom{m+n}{i+2},
\end{split}\end{equation*} as desired.

\noindent{\it Case s}$>0$:
We also employ the induction on $n$. The case that $i=0$ is straightforward. If $i\geq 1$,  we have
\begin{equation*}\begin{split} \beta_{i}(J^{s}_{m:n})&=\beta_{i}(J^{s}_{m:n-1})+\beta_i(H^{s}_{m:n})+\beta_{i-1}(J^{s}_{m:n-1})
\\&=(i+1)\binom{m+n}{i+2}+i\binom{m+n}{i+1}+\binom{m+n}{i+1}\\&=(i+1)\binom{m+n+1}{i+2}.
\end{split}\end{equation*} This completes the proof.
\end{proof}

\begin{Proposition} \label{reg2} Let $s\geq 0$ be an even number and  $m,n\geq 1$. Then $$\reg(J^{s}_{m:n})=\mathrm{mat}(B^{s}_{\underline{p}:\underline{q}})+1.$$
\end{Proposition}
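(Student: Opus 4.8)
The plan is to mirror the inductive strategy used in the proof of Proposition~\ref{reg1}, but now carried out over the parameter $n$ (the number of odd cycles attached at the vertex $v$), splitting into the cases $s>0$ and $s=0$ according to whether the connecting path is present. First I would record the matching number: for $B^{s}_{\underline p:\underline q}$ one has $\mathrm{mat}(B^{s}_{\underline p:\underline q})=\sum_{i=1}^{m}p_i+\sum_{j=1}^{n}q_j+\lfloor s/2\rfloor+1$ when $s>0$ (a maximum matching uses $p_i$ edges in each $u$-cycle, $q_j$ edges in each $v$-cycle, the edge $x=\{u,v\}$, and $s/2$ edges along the even path, noting the path has $s-1$ internal vertices and one can cover $u,v$ by $x$), and $\sum p_i+\sum q_j+1$ when $s=0$. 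So the target value is $\mathrm{mat}(B^{s}_{\underline p:\underline q})+1$.

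Next I would feed the E-K splitting from Proposition~\ref{BSEKSP} (resp.\ Proposition~\ref{BoEKSP}) into the graded formula of Lemma~\cite[Proposition 3.1]{EK}, exactly as in~(\ref{ekgraded}): writing $d_n:=\deg f'_n=q_n+1$, we get
\[
\beta_{i,j}(J^{s}_{m:n})=\beta_{i,j}(J^{s}_{m:n-1})+\beta_{i,j-d_n}(H^{s}_{m:n})+\beta_{i-1,j-d_n}(J^{s}_{m:n-1}),
\]
hence
\[
\reg(J^{s}_{m:n})=\max\{\reg(J^{s}_{m:n-1}),\ \reg(J^{s}_{m:n-1})+q_n,\ \reg(H^{s}_{m:n})+q_n+1\}.
\]
The ideal $H^{s}_{m:n}=(e'_1,\dots,e'_m,f''_1,\dots,f''_{n-1},x')$ is generated by a regular sequence (its generators involve pairwise disjoint sets of variables, so they form a regular sequence in the polynomial ring), of degrees $p_1+1,\dots,p_m+1,q_1,\dots,q_{n-1},s/2$ when $s>0$ (and without the last generator when $s=0$); by Koszul theory $\reg(H^{s}_{m:n})=\big(\sum_{i=1}^m(p_i+1)+\sum_{j=1}^{n-1}q_j+s/2\big)-(m+n-1)+1=\sum p_i+\sum_{j<n}q_j+s/2-n+2$. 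Substituting the inductive value $\reg(J^{s}_{m:n-1})=\mathrm{mat}(B^{s}_{\underline p:\underline q'})+1$ and comparing the three quantities, the middle term $\reg(J^{s}_{m:n-1})+q_n$ will dominate and evaluate precisely to $\mathrm{mat}(B^{s}_{\underline p:\underline q})+1$, while the third term is smaller (it carries a $-n$ deficit). For the base case I would take $n=1$ (or $m=n=1$ when $s=0$): when $s=0$, $J^{0}_{1:1}$ is the single monomial $e'_1 f'_1$ of degree $p_1+q_1+2$, giving $\reg=p_1+q_1+2=\mathrm{mat}+1$; when $s>0$ I would instead induct downward to $m=1,n=1$ and handle that small ideal directly (it is generated by $e'_1 f'_1$, $e'_1 x''$, $f'_1 x'$, which is itself amenable to one more E-K splitting or a direct computation), checking $\reg=\sum p_i+\sum q_j+s/2+2$.

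The delicate point — and the main obstacle — is twofold. First, one must verify carefully that the generators of $H^{s}_{m:n}$ really do form a regular sequence: this needs that $e'_1,\dots,e'_m,f''_1,\dots,f''_{n-1},x'$ are monomials in pairwise disjoint variable sets, which follows from the labeling in Subsection~\ref{3.2} (the $e_{i,*}$, the $f_{j,*}$, and the $x_*$ live on disjoint edge sets, and within each family the "odd-indexed" and "even-indexed" products are disjoint), but the case $s>0$ versus $s=0$ must be tracked since $x'$ is present only when $s>0$. Second, one must confirm that the comparison of the three maxima genuinely makes the middle term win for \emph{all} choices of the $p_i$, $q_j$ and $s$; the risk is that for small $n$ or degenerate parameters the Koszul term could tie or exceed it, so I would check the inequality $\reg(H^{s}_{m:n})+q_n+1\le \reg(J^{s}_{m:n-1})+q_n$, i.e.\ $\sum p_i+\sum_{j<n}q_j+s/2-n+3\le \mathrm{mat}(B^{s}_{\underline p:\underline q'})+1+q_n$, which after substituting the matching-number formula reduces to $-n+2\le 1$, i.e.\ $n\ge 1$ — always true. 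This confirms the middle term dominates, completing the induction.
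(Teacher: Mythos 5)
Your overall strategy is exactly the paper's: induct on $n$ using the E-K splitting of Propositions~\ref{BSEKSP}/\ref{BoEKSP}, express $\reg(J^s_{m:n})$ as the maximum of three terms, and compute $\reg(H^s_{m:n})$ from the Koszul complex. However, the $s>0$ case of your execution contains compounding off-by-one errors that would lead to a false conclusion. First, your matching number is wrong: $|V(B^s)|=2\sum p_i+2\sum q_j+s+1$, so $\mathrm{mat}(B^s)\le\lfloor|V|/2\rfloor=\sum p_i+\sum q_j+s/2$, and this bound is attained; your claimed value $\sum p_i+\sum q_j+s/2+1$ exceeds it (your proposed matching uses the edge $x$ \emph{and} $s/2$ edges on the path, but once $u,v$ are covered by $x$ the path contributes only its $s-2$ internal edges, hence at most $s/2-1$ matching edges). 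Second, $H^s_{m:n}=(e'_1,\dots,e'_m,f''_1,\dots,f''_{n-1},x')$ has $m+n$ generators, not $m+n-1$, so $\reg(H^s_{m:n})=\sum p_i+\sum_{j<n}q_j+s/2-n+1$, one less than you state. Third, your base case is wrong: for $J^s_{1:1}=(e'_1f'_1,\,e'_1x'',\,f'_1x')$ the minimal free resolution is $0\to R(-(p_1+q_1+2+s/2))^2\to R^3\to J^s_{1:1}\to 0$ (the Taylor syzygy on the full triple cancels against the one on $(e'_1x'',f'_1x')$ since their lcms coincide), giving $\reg=p_1+q_1+s/2+1$, not $+2$. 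These three errors are mutually consistent with each other, so your induction would "close" but would prove $\reg(J^s_{m:n})=\sum p_i+\sum q_j+s/2+2$, which is one more than the true value $\mathrm{mat}(B^s_{\underline p:\underline q})+1=\sum p_i+\sum q_j+s/2+1$.

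Two further points. For $s>0$ your base case should really be $n=1$ with general $m$, which the paper handles by observing that $J^s_{m:0}=\{e''_ie'_j\}\cup\{e'_ix''\}$ becomes a type-one ideal after setting $e''_0=x''$, so $\reg(J^s_{m:0})=\sum p_i+s/2+1$ by Proposition~\ref{reg1}; reducing all the way to $m=n=1$ is not available since the splitting only strips off $v$-cycles. And in the $s=0$ case your claim that the middle term always dominates fails at $n=1$: there $\reg(H^0_{m:1})+q_1+1=\sum p_i+q_1+2$ strictly exceeds $\reg(J^0_{m:0})+q_1=\sum p_i+q_1+1$, so it is the Koszul term that achieves the maximum (this is precisely why the paper treats $n=1$ separately). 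The final answer is unaffected there, but the stated justification is not correct as written, and your single-monomial computation of the $s=0$ base case only covers $m=1$.
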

\begin{proof}

 \noindent $\textit{Case s=0:}$ In this case, $\mathrm{mat}(B^0_{\underline{p}:\underline{q}})=\sum\limits_{i=1}^{m}p_{i}+\sum\limits_{i=1}^{n}q_{i}+1.$
  We proceed with the induction on $n$.
By  Proposition~\ref{BoEKSP}, we have $$ \beta_{i,j}(J^0_{m:n})=\beta_{i,j}(J^0_{m:n-1})+\beta_{i, j-q_n-1}(H^0_{m:n})+\beta_{i-1,j-q_n-1}(J^0_{m:n-1}).$$

It follows that
\begin{equation*} \begin{split} \reg (J^0_{m:n})&=\max\{j-i \mid \beta_{i,j}(J^0_{m:n-1})+\beta_{i-1,j-q_n-1}(J^{0}_{m:n-1})+\beta_{i, j-q_n-1}(H^0_{m:n})\neq 0\}\\ &=\max\{\reg(J^0_{m:n-1}), \reg(J^0_{m:n-1})+q_n, \reg(H^0_{m:n})+q_n+1\}.\end{split}
\end{equation*}
Note that $H^0_{m:n}$ is generated by a regular sequence of degrees $p_1+1,\ldots, p_{m}+1,q_1,\ldots, q_{n-1}$. By using the Koszul theory,  we obtain $$\reg(H^0_{m:n})=\sum\limits_{i=1}^{m}p_{i}+\sum\limits_{i=1}^{n-1}q_{i}-n+2.$$ Hence,
if $n=1$, since $\mathrm{reg}(J^{0}_{m:0})=\mathrm{reg}(J_{m})=\sum\limits_{i=1}^{m}p_{i}+1,$ we have $$\mathrm{reg}(J^0_{m:1})=\max\{\sum\limits_{i=1}^{m}p_{i}+q_1+1, \sum\limits_{i=1}^{m}p_{i}+q_1+2\}=\sum\limits_{i=1}^{m}p_{i}+q_1+2.$$
This proves the case when $n=1$.

If $n>1$, then \begin{equation*}\begin{split}\reg(J^0_{m:n})&=\max\{\sum\limits_{i=1}^{m}p_{i}+\sum\limits_{i=1}^{n}q_{i}+2, \sum\limits_{i=1}^{m}p_{i}+\sum\limits_{i=1}^{n}q_{i}-n+3\}\\&=\sum\limits_{i=1}^{m}p_{i}+\sum\limits_{i=1}^{n}q_{i}+2.  \end{split}
\end{equation*}

\noindent{\it Case $s>0$:} In this case, we have $\mathrm{mat}(B^{s}_{\underline{p}:\underline{q}})=\sum\limits_{i=1}^{m}p_{i}+\sum\limits_{i=1}^{n}q_{i}+\frac{s}{2}.$
We proceed with the induction on $n$ again. First, note that $$\mathrm{reg}(H_{m:n}^s)=\sum\limits_{i=1}^{m}p_{i}+\sum\limits_{i=1}^{n-1}q_{i}+\frac{s}{2}-n+1.$$

If $n=1$, then
$$ \beta_{i,j}(J^{s}_{m:1})=\beta_{i,j}(J^{s}_{m:0})+\beta_{i, j-q_1-1}(H^{s}_{m:1})+\beta_{i-1,j-q_1-1}(J^{s}_{m:0}).$$
 Note that $G(J_{m:0}^s)=\{e''_ie'_j\mid 1\leq i<j\leq m\}\cup\{e'_ix''\mid 1\leq i\leq m\}$. By putting $e''_0=x''$, we may write $G(J_{m:0}^s)=\{e''_ie'_j\mid 0\leq i<j\leq m\}.$ This is exactly the ideal studied in Subsection 4.1, and so
 it follows  from Proposition~\ref{reg1} that $$\mathrm{reg}(J_{m:0}^s)=\sum\limits_{i=1}^mp_i+\frac{s}{2}+1.$$
  Hence, $\reg(J^{s}_{m:1})=\max\{ \reg(J_{m:0}^s)+q_1, \reg(H^{s}_{m:1})+q_1+1\}=\sum\limits_{i=1}^{m}p_{i}+q_{1}+\frac{s}{2}+1$.

 Suppose that $n> 1$. Then,
since $$ \beta_{i,j}(J^{s}_{m:n})=\beta_{i,j}(J^{s}_{m:n-1})+\beta_{i, j-q_n-1}(H^{s}_{m:n})+\beta_{i-1,j-q_n-1}(J^{s}_{m:n-1}),$$
we have
\begin{equation*} \begin{split} \reg(J^{s}_{m:n}) &=\max\{\reg(J^{s}_{m:n-1}), \reg(J^{s}_{m:n-1})+q_n, \reg(H^{s}_{m:n})+q_n+1\}\\ &=\max\{\sum\limits_{i=1}^{m}p_{i}+\sum\limits_{i=1}^{n}q_{i}+\frac{s}{2}+1, \sum\limits_{i=1}^{m}p_{i}+\sum\limits_{i=1}^{n}q_{i}-n+\frac{s}{2}+2\}\\&=\sum\limits_{i=1}^{m}p_{i}+\sum\limits_{i=1}^{n}q_{i}+\frac{s}{2}+1,\end{split}
\end{equation*}
 as desired.
\end{proof}

\subsection{Type three }In this subsection, we denote the toric ideal of $C_{\underline{p}:\underline{q}:\underline{r}}$ and its initial ideal  as $I_{m:n:k}$ and $J_{m:n:k}$   respectively. Similarly, $I_{m:n:k-1}$ and $J_{m:n:k-1}$ represent the toric ideal of $C_{\underline{p}:\underline{q}:\underline{r'}}$ and its initial ideal, respectively.   Recall from Subsection~\ref{3.3} that $G(J_{m:n:k})$ is the disjoint union $\left\{e''_{i}e'_{j}\ |\   1 \leq i < j \leq m \right\}\cup\left\{f''_{i}f'_{j}\ |\   1 \leq i < j \leq n\right\}\cup\left\{g''_{i}g'_{j}\ |\   1 \leq i < j \leq k \right\}\cup\left\{e'_{i}y\ |\   1 \leq i \leq m \right\}\cup\left\{f'_{i}z\ |\   1 \leq i \leq n \right\}\cup\left\{ g'_{i}x\ |\   1 \leq i \leq k \right\}\cup\left\{e'_{i}f'_{j}\ |\   1 \leq i \leq m,1 \leq j \leq n \right\}\cup\left\{f'_{i}g'_{j}\ |\   1 \leq i \leq n,1 \leq j \leq k \right\}$

\noindent $\cup\left\{g'_{i}e'_{j}\ |\   1 \leq i \leq k,1 \leq j \leq m \right\}$.

\begin{Proposition}\label{CEKSP}Denote by $H_{m:n:k}$ the monomial ideal $$(x,e'_{1}, \ldots, e'_{m},f'_{1}, \ldots, f'_{n},g''_{1}, \ldots, g''_{k-1}).$$ Then
$J_{m:n:k}=J_{m:n:k-1}+g'_{k}H_{m:n:k}$ is an E-K splitting. Furthermore, we have $$J_{m:n:k-1} \cap g'_{k}H_{m:n:k}=g'_{k}J_{m:n:k-1}.$$

\end{Proposition}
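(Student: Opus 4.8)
The plan is to run the same E-K-splitting template used in Propositions~\ref{AEK} and~\ref{BSEKSP}, with the whole argument pivoting on one coprimality observation: the monomial $g'_k=g_{k,1}g_{k,3}\cdots g_{k,2r_k+1}$ is a product of edges of the $k$-th $w$-cycle, and none of these edges occurs in any minimal generator of $J_{m:n:k-1}$ nor of $H_{m:n:k}$; hence $g'_k$ is coprime to every generator of both ideals. With this in hand I would first record the two routine facts. (a) $G(J_{m:n:k})=G(J_{m:n:k-1})\sqcup G(g'_kH_{m:n:k})$: reading off the description of $G(J_{m:n:k})$ from Subsection~\ref{3.3}, the generators that involve an edge of the $k$-th $w$-cycle are exactly $g'_kx$, the $g'_ke'_j$, the $g'_kf'_j$, and the $g'_kg''_i$ with $i<k$, i.e. exactly $g'_k$ times the generators of $H_{m:n:k}$, and the remaining generators are precisely those of $J_{m:n:k-1}$. (b) $J_{m:n:k-1}\cap g'_kH_{m:n:k}=g'_kJ_{m:n:k-1}$: for "$\supseteq$" note every minimal generator of $J_{m:n:k-1}$ is divisible by one of $x,e'_1,\dots,e'_m,f'_1,\dots,f'_n,g''_1,\dots,g''_{k-1}$, so $J_{m:n:k-1}\subseteq H_{m:n:k}$; for "$\subseteq$", since $g'_k$ is coprime to every generator of $J_{m:n:k-1}$ and of $H_{m:n:k}$, for $u\in G(J_{m:n:k-1})$ and $v\in G(H_{m:n:k})$ we have $\mathrm{lcm}(u,g'_kv)=g'_k\,\mathrm{lcm}(u,v)\in g'_kJ_{m:n:k-1}$ because $u\mid\mathrm{lcm}(u,v)$.

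Next I would set up the splitting functions on $G(J_{m:n:k-1}\cap g'_kH_{m:n:k})=\{g'_ku:u\in G(J_{m:n:k-1})\}$. Define $\phi(g'_ku)=u$. For $\psi$, since $J_{m:n:k-1}\subseteq H_{m:n:k}$ each $u\in G(J_{m:n:k-1})$ is divisible by some generator of $H_{m:n:k}$; fix the choice $\psi'(u)$ by the rule mirroring Proposition~\ref{BSEKSP}: $\psi'(e''_ie'_j)=e'_j$, $\psi'(f''_if'_j)=f'_j$, $\psi'(g''_ig'_j)=g''_i$, $\psi'(e'_if'_j)=e'_i$, $\psi'(f'_ig'_j)=f'_i$, $\psi'(g'_ie'_j)=e'_j$, $\psi'(e'_iy)=e'_i$, $\psi'(f'_iz)=f'_i$, $\psi'(g'_ix)=x$, and put $\psi(g'_ku)=g'_k\psi'(u)\in G(g'_kH_{m:n:k})$. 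The structural facts I would record here, for use in the last step, are: every $u\in G(J_{m:n:k-1})$ is a product of two distinct members of the pairwise-coprime family of "atoms" $\{e'_i,e''_i,f'_j,f''_j,g'_\ell,g''_\ell,x,y,z\}$; and $\psi'(u)$ is always one of those two atoms and always lies in the restricted set $\{x\}\cup\{e'_i\}\cup\{f'_j\}\cup\{g''_\ell:\ell<k\}$.

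Finally I would verify the conditions of Definition~\ref{DEKSP}. Condition (1): for $w=g'_ku$, coprimality of $g'_k$ and $\psi'(u)\mid u$ give $\mathrm{lcm}(\phi(w),\psi(w))=\mathrm{lcm}(u,g'_k\psi'(u))=g'_k\,\mathrm{lcm}(u,\psi'(u))=g'_ku=w$. Condition (2): let $\emptyset\neq S=\{g'_ku:u\in S'\}$ with $\emptyset\neq S'\subseteq G(J_{m:n:k-1})$. Since the atoms are pairwise coprime, the lcm of any set of atom-pair monomials is the product of the atoms occurring in it; thus, writing $\mathcal A$ for the set of atoms appearing in $S'$, we get $\mathrm{lcm}(S)=g'_k\prod_{a\in\mathcal A}a$, $\mathrm{lcm}(\phi(S))=\prod_{a\in\mathcal A}a$, and $\mathrm{lcm}(\psi(S))=g'_k\prod_{a\in\psi'(S')}a$. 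As $g'_k\neq1$, $\mathrm{lcm}(\phi(S))$ strictly divides $\mathrm{lcm}(S)$; it remains to produce an atom in $\mathcal A$ not lying in $\psi'(S')$. If some $u_0\in S'$ is not of the form $e'_if'_j$, then its complementary atom $u_0/\psi'(u_0)$ lies in $\{e''_\ast,f''_\ast,g'_\ell\ (\ell<k),y,z\}$, none of which can be a value of $\psi'$, so it is the required atom. Otherwise every element of $S'$ has the form $e'_if'_j$, so every value of $\psi'$ on $S'$ is an $e'$-atom, and for any $u_0=e'_if'_j\in S'$ the $f'$-atom $f'_j$ lies in $\mathcal A\setminus\psi'(S')$. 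Hence $\mathrm{lcm}(\psi(S))$ strictly divides $\mathrm{lcm}(S)$.

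The one place that requires genuine care is condition (2) for $\psi$, and inside it the generators $e'_if'_j$ — these are the only generators of $J_{m:n:k-1}$ \emph{both} of whose atoms belong to $H_{m:n:k}$, so they are exactly where the "otherwise" branch of the dichotomy is needed; that the two atoms of such a generator are of different type ($e'$ versus $f'$) is what saves the argument. Everything else is bookkeeping entirely parallel to the type~1 and type~2 cases.
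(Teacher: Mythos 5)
Your proposal is correct and follows essentially the same route as the paper: you use the identical decomposition $J_{m:n:k}=J_{m:n:k-1}+g'_{k}H_{m:n:k}$ and the very same splitting functions $\phi$ and $\psi$ (your $\psi'$ reproduces the paper's rules generator by generator). The only difference is that you write out in full the coprimality/atom argument for the intersection and for conditions (1) and (2) of Definition~\ref{DEKSP}, which the paper dismisses as routine, and your verification of these steps is accurate.
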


\begin{proof}
First of all, it is routine to check that $G(J_{m:n:k})=G(J_{m:n:k-1})\bigsqcup G(g'_{k}H_{m:n:k})$ and $J_{m:n:k-1} \cap g'_{k}H_{m:n:k}=g'_{k}J_{m:n:k-1}$.

Define a function $\phi :G(g'_{k}J_{m:n:k-1}) \to G(J_{C_{m:n:k-1}})$ that sends $g'_{k}u$  to $u$ for all $ u\in G(J_{m:n:k-1}).$

Define a function $\psi :G(g'_{k}J_{m:n:k-1}) \to  G(g'_{k}H_{m:n:k})$ by the following rules:
\begin{itemize}
  \item $g'_{k}e''_{i}e'_{j} \mapsto g'_{k}e'_{j}$ for all $1\leq i<j \leq m$ and $g'_{k}e'_{i}y \mapsto g'_{k}e'_{i}$ for all $1\leq i\leq m$;
  \item $g'_{k}e'_{i}f'_{j} \mapsto g'_{k}e'_{i}$ for all $1\leq i\leq m$ and $1\leq j\leq n$;
  \item $g'_{k}f''_{i}f'_{j} \mapsto g'_{k}f'_{j}$ for all $1\leq i<j \leq n$ and $g'_{k}f'_{i}z \mapsto g'_{k}f'_{i}$  for all $1\leq i\leq n$;
  \item $g'_{k}f'_{i}g'_{j} \mapsto g'_{k}f'_{i}$ for all $1\leq i\leq k-1$ and $1\leq j\leq n$;
  \item $g'_{k}g''_{i}g'_{j} \mapsto g'_{k}g''_{i}$ for all $1\leq i<j \leq k-1$ and $g'_{k}g'_{i}x \mapsto g'_{k}x$ for all $1\leq i<j \leq k-1$;
  \item $g'_{k}g'_{i}e'_{j} \mapsto g'_{k}e'_{j}$ for all $1\leq i\leq k-1$ and $1\leq j\leq m$.
\end{itemize}

It is routine to check conditions (1) and (2) of Definition \ref{DEKSP} are satisfied.
\end{proof}

\begin{Proposition}\label{total3} Let $m,n,k\geq 1$. Then $\beta_i(J_{m:n:k})=(i+1)\binom{m+n+k+1}{i+2}$ for all $i\geq 0$. In particular,
$\mathrm{pdim}(J_{m:n:k})=m+n+k-1$.

\end{Proposition}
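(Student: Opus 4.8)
The plan is to mimic the proofs of Propositions~\ref{total1} and \ref{total2}, running an induction on $k$ (and, in the base case $k=1$, falling back on the type-two results already established). The key tool is the E-K splitting $J_{m:n:k}=J_{m:n:k-1}+g'_kH_{m:n:k}$ from Proposition~\ref{CEKSP}, together with the identification $J_{m:n:k-1}\cap g'_kH_{m:n:k}=g'_kJ_{m:n:k-1}$, which via the E-K formula $(*)$ gives
\begin{equation*}
\beta_i(J_{m:n:k})=\beta_i(J_{m:n:k-1})+\beta_i(H_{m:n:k})+\beta_{i-1}(J_{m:n:k-1})
\end{equation*}
for all $i\geq 0$. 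The ideal $H_{m:n:k}=(x,e'_1,\ldots,e'_m,f'_1,\ldots,f'_n,g''_1,\ldots,g''_{k-1})$ is generated by $m+n+k$ pairwise coprime monomials (the variable $x$, the products $e'_i$, the products $f'_j$, and the products $g''_\ell$ involve disjoint sets of variables), hence is a complete intersection, so by the Koszul complex $\beta_i(H_{m:n:k})=\binom{m+n+k}{i+1}$ for all $i\geq 0$.

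First I would settle the base case $k=1$: here $C_{\underline p:\underline q:\underline r}$ with a single odd cycle at $w$ is, up to the irrelevant pendant structure, essentially a type-two graph $B^{2}_{\ldots}$ (the cycle through $w$ together with the triangle $u$-$v$-$w$ plays the role of an even-length connecting path of length $2$), so that $J_{m:n:1}$ coincides with an ideal of the form $J^{s}_{m':n'}$ with $s>0$ and $m'+n'=m+n+k-1=m+n$; Proposition~\ref{total2} then yields $\beta_i(J_{m:n:1})=(i+1)\binom{m+n+2}{i+2}=(i+1)\binom{m+n+k+1}{i+2}$. Alternatively, one can verify the base case directly from the generators using the same splitting argument. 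For the inductive step, assuming $\beta_i(J_{m:n:k-1})=(i+1)\binom{m+n+k}{i+2}$ for all $i$, substitute into the displayed formula: for $i=0$ the identity $\beta_0(J_{m:n:k})=\binom{m+n+k}{2}+\binom{m+n+k}{1}=\binom{m+n+k+1}{2}$ is immediate, and for $i\geq 1$,
\begin{equation*}
\beta_i(J_{m:n:k})=(i+1)\binom{m+n+k}{i+2}+\binom{m+n+k}{i+1}+i\binom{m+n+k}{i+1}=(i+1)\binom{m+n+k+1}{i+2},
\end{equation*}
using $\binom{N}{i+2}+\binom{N}{i+1}=\binom{N+1}{i+2}$. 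Finally $\mathrm{pdim}(J_{m:n:k})=m+n+k-1$ follows since $(i+1)\binom{m+n+k+1}{i+2}\neq 0$ exactly when $i+2\leq m+n+k+1$, i.e. $i\leq m+n+k-1$, so the last nonvanishing total Betti number is $\beta_{m+n+k-1}$.

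I do not expect any serious obstacle here; the proposition is a formal consequence of the E-K splitting combined with Pascal's rule, exactly parallel to the type-one and type-two computations. The one point that requires a little care is the base case $k=1$: one must make sure the claimed identification $J_{m:n:1}\cong J^{s}_{m':n'}$ (with the appropriate index shift and $s>0$) is legitimate, or else carry out the splitting recursion one more level down to reduce cleanly to the already-proven type-two formula. A secondary minor check is confirming that the $m+n+k$ generators of $H_{m:n:k}$ really do form a regular sequence, i.e. that $x$ and the monomials $e'_i,f'_j,g''_\ell$ live in disjoint variable sets — which is clear from the labelling of the edges of $C$ in Subsection~\ref{3.3}.
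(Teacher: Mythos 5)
Your inductive step is exactly the paper's argument: the E-K splitting of Proposition~\ref{CEKSP}, the observation that $H_{m:n:k}$ is a complete intersection of $m+n+k$ pairwise coprime monomials so that $\beta_i(H_{m:n:k})=\binom{m+n+k}{i+1}$, and Pascal's rule. That part is correct and matches the paper verbatim.

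The one genuine problem is your primary handling of the base case. The identification of $J_{m:n:1}$ with a type-two ideal $J^{s}_{m':n'}$ with $m'+n'=m+n$ is false: for $k=1$ the vertex $w$ still has degree $4$ (two triangle edges plus its own odd cycle), so $C_{\underline p:\underline q:(r_1)}$ has three big vertices and is not a type-two graph, and the ideals do not match — e.g.\ for $m=n=k=1$ the ideal $J_{1:1:1}$ has $6$ minimal generators ($e'_1f'_1$, $f'_1g'_1$, $g'_1e'_1$, $e'_1y$, $f'_1z$, $g'_1x$), whereas $J^{s}_{m':n'}$ with $m'+n'=2$ has only $3$. Your arithmetic there is also internally inconsistent: Proposition~\ref{total2} applied to $m'+n'=m+n$ would give $(i+1)\binom{m+n+1}{i+2}$, not the $(i+1)\binom{m+n+2}{i+2}$ you wrote down. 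The fix is precisely your stated fallback: run the splitting recursion one more level, down to $J_{m:n:0}$, whose generating set $\{e''_ie'_j\}\cup\{f''_if'_j\}\cup\{e'_if'_j\}\cup\{e'_iy\}\cup\{f'_iz\}$ coincides (after renaming $y\mapsto x_2$, $z\mapsto x_1$) with that of $J^{2}_{m:n}$, so Proposition~\ref{total2} gives $\beta_i(J_{m:n:0})=(i+1)\binom{m+n+1}{i+2}$, which is the claimed formula at $k=0$; the induction then starts there. This is exactly the identification the paper itself uses (see the proof of Proposition~\ref{reg3}, where $\reg(J_{m:n:0})=\reg(J^{2}_{m:n})$ is invoked). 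With that correction your proof coincides with the paper's.
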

\begin{proof} We also employ the induction on $k$. The case that $i=0$ is straightforward. If $i\geq 1$, then,  we have
\begin{equation*}\begin{split} \beta_{i}(J_{m:n:k})&=\beta_{i}(J_{m:n:k-1})+\beta_i(H_{m:n:k})+\beta_{i-1}(J_{m:n:k-1})
\\&=(i+1)\binom{m+n+k}{i+2}+\binom{m+n+k}{i+1}+i\binom{m+n+k}{i+1}\\&=(i+1)\binom{m+n+k+1}{i+2},
\end{split} \end{equation*} as desired.
\end{proof}

\begin{Proposition}\label{reg3}  Let $m,n,k\geq 1$. Then $\reg(J_{m:n:k})=\mathrm{mat}(C_{\underline{p}:\underline{q}:\underline{r}})+1$.
\end{Proposition}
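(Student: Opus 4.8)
The plan is to mirror the inductive arguments of Propositions~\ref{reg1} and~\ref{reg2}: I would induct on $k$, using the E-K splitting of Proposition~\ref{CEKSP} for the step and the type-two formula of Proposition~\ref{reg2} for the base case.

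First I would pin down the matching number. The graph $C_{\underline{p}:\underline{q}:\underline{r}}$ is assembled from the three ``lobes'' $A_{\underline p}$, $A_{\underline q}$, $A_{\underline r}$, with big vertices $u$, $v$, $w$, by adjoining the triangle on $\{u,v,w\}$ with edges $x=\{u,v\}$, $y=\{v,w\}$, $z=\{w,u\}$. Given a matching $M$, split it according to this decomposition of the edge set; since the matching number of each lobe equals $\sum_i p_i$ (respectively $\sum_i q_i$, $\sum_i r_i$) and is already attained by a matching avoiding its big vertex, while $M$ contains at most one of $x$, $y$, $z$, one gets
\[
\mathrm{mat}(C_{\underline{p}:\underline{q}:\underline{r}})=\sum_{i=1}^{m}p_{i}+\sum_{i=1}^{n}q_{i}+\sum_{i=1}^{k}r_{i}+1 ,
\]
the extra $1$ being realized by adjoining, say, the edge $x$ to perfect matchings of the path components of $A_{\underline p}\setminus u$, $A_{\underline q}\setminus v$, $A_{\underline r}\setminus w$. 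For the base case I would observe that when $\underline r$ is empty the vertex $w$ has degree two, so $C_{\underline p:\underline q:\emptyset}$ is nothing but $B^{2}_{\underline p:\underline q}$; under the identification $z\leftrightarrow x_{1}$, $y\leftrightarrow x_{2}$ the monomial orders of Subsections~\ref{3.2} and~\ref{3.3} coincide, hence $J_{m:n:0}=J^{2}_{m:n}$ and Proposition~\ref{reg2} gives $\reg(J_{m:n:0})=\mathrm{mat}(B^{2}_{\underline p:\underline q})+1=\sum_i p_i+\sum_i q_i+2$, which is the claimed value at $k=0$.

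For the inductive step I would invoke Proposition~\ref{CEKSP}: $J_{m:n:k}=J_{m:n:k-1}+g'_{k}H_{m:n:k}$ is an E-K splitting with $J_{m:n:k-1}\cap g'_{k}H_{m:n:k}=g'_{k}J_{m:n:k-1}$, where $H_{m:n:k}=(x,e'_{1},\dots,e'_{m},f'_{1},\dots,f'_{n},g''_{1},\dots,g''_{k-1})$ and $\deg g'_{k}=r_{k}+1$. The graded splitting formula then reads
\[
\beta_{i,j}(J_{m:n:k})=\beta_{i,j}(J_{m:n:k-1})+\beta_{i,j-r_{k}-1}(H_{m:n:k})+\beta_{i-1,j-r_{k}-1}(J_{m:n:k-1}),
\]
so, exactly as in Propositions~\ref{reg1} and~\ref{reg2},
\[
\reg(J_{m:n:k})=\max\{\reg(J_{m:n:k-1}),\ \reg(J_{m:n:k-1})+r_{k},\ \reg(H_{m:n:k})+r_{k}+1\}.
\]
Since the generators of $H_{m:n:k}$ are monomials in pairwise disjoint sets of variables, $H_{m:n:k}$ is a complete intersection of degrees $1$, $p_{1}+1,\dots,p_{m}+1$, $q_{1}+1,\dots,q_{n}+1$, $r_{1},\dots,r_{k-1}$; by the Koszul resolution $\reg(H_{m:n:k})=\sum_i p_i+\sum_i q_i+\sum_{\ell=1}^{k-1}(r_{\ell}-1)+1$. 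Substituting the induction hypothesis $\reg(J_{m:n:k-1})=\sum_i p_i+\sum_i q_i+\sum_{\ell=1}^{k-1}r_{\ell}+2$ and simplifying, the three quantities in the maximum become $\sum_i p_i+\sum_i q_i+\sum_{\ell=1}^{k-1}r_{\ell}+2$, $\sum_i p_i+\sum_i q_i+\sum_{\ell=1}^{k}r_{\ell}+2$, and $\sum_i p_i+\sum_i q_i+\sum_{\ell=1}^{k}r_{\ell}-k+3$; since $r_k\geq1$ and $k\geq1$, the maximum is the middle one, $\sum_i p_i+\sum_i q_i+\sum_{\ell=1}^{k}r_{\ell}+2=\mathrm{mat}(C_{\underline p:\underline q:\underline r})+1$, closing the induction.

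The main obstacle I anticipate is the first step: correctly computing $\mathrm{mat}(C_{\underline p:\underline q:\underline r})$, in particular establishing the upper bound, together with making the identification $J_{m:n:0}=J^{2}_{m:n}$ precise enough to apply Proposition~\ref{reg2}. Once those points are in place, the inductive step is the same bookkeeping with the E-K splitting formula already used for types one and two, so I expect no further difficulty.
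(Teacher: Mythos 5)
Your proposal is correct and follows essentially the same route as the paper: induction on $k$ via the E-K splitting of Proposition~\ref{CEKSP}, the Koszul (complete intersection) computation of $\reg(H_{m:n:k})$, and the identification $\reg(J_{m:n:0})=\reg(J^{2}_{m:n})$ to anchor the induction (the paper starts at $k=1$ rather than $k=0$, a cosmetic difference). Your extra details on $\mathrm{mat}(C_{\underline p:\underline q:\underline r})$ and on matching the monomial orders under $z\leftrightarrow x_1$, $y\leftrightarrow x_2$ only make explicit what the paper asserts without proof.
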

\begin{proof} Note that
$\mathrm{mat}(C_{\underline{p}:\underline{q}:\underline{r}})=\sum\limits_{i=1}^{m}p_{i}+\sum\limits_{i=1}^{n}q_{i}+\sum\limits_{i=1}^{k}r_{i}+1.$
We proceed with the induction on $k$. If $k=1$, then   $$\beta_{i,j}(J_{m:n:1})=\beta_{i,j}(J_{m:n:0})+\beta_{i, j-r_1-1}(H_{m:n:1})+\beta_{i-1,j-r_1-1}(J_{m:n:0}).$$
Since $\reg(J_{m:n:0})=\reg(J^{2}_{m:n})$,
 we obtain $\reg(J_{m:n:1})=\sum\limits_{i=1}^{m}p_{i}+\sum\limits_{i=1}^{n}q_{i}+r_{1}+2$.

 Suppose that $k> 1$. Then,
by Proposition~\ref{CEKSP}, we have $$ \beta_{i,j}(J_{m:n:k})=\beta_{i,j}(J_{m:n:k-1})+\beta_{i, j-r_k-1}(H_{m:n:k})+\beta_{i-1,j-r_k-1}(J_{m:n:k-1}).$$

It follows that $\reg (J_{m:n:k})$
\begin{equation*} \begin{split}
&=\max\{j-i \mid \beta_{i,j}(J_{m:n:k-1})+\beta_{i-1,j-r_k-1}(J_{m:n:k-1})+\beta_{i, j-r_k-1}(H_{m:n:k})\neq 0\}\\ &=\max\{\reg(J_{m:n:k-1}), \reg(H_{m:n:k})+r_k+1, \reg(J_{m:n:k-1})+r_k \}.\end{split}
\end{equation*}
Note that $H_{m:n:k}$ is generated by a regular sequence of degrees $p_1+1,\ldots, p_{m}+1,q_1+1,\ldots, q_{n}+1,r_1,\ldots, r_{k-1},1$,   we obtain $\reg(H_{m:n:k})=\sum\limits_{i=1}^{m}p_{i}+\sum\limits_{i=1}^{n}q_{i}+\sum\limits_{i=1}^{k-1}r_{i}-k+2$. Hence,
\begin{equation*}\begin{split}
\reg(J_{m:n:k})&=\max\{\sum\limits_{i=1}^{m}p_{i}+\sum\limits_{i=1}^{n}q_{i}+\sum\limits_{i=1}^{k}r_{i}+2,\sum\limits_{i=1}^{m}p_{i}+\sum\limits_{i=1}^{n}q_{i}+\sum\limits_{i=1}^{k}r_{i}-k+3\}\\
&=\sum\limits_{i=1}^{m}p_{i}+\sum\limits_{i=1}^{n}q_{i}+\sum\limits_{i=1}^{k}r_{i}+2,  \end{split}
\end{equation*}as desired.
\end{proof}

To complete the proof of Theorem~\ref{main4}, we require some additional notation and facts. Recall a connected graph is  {\it planar} if it can be drawn on a 2D plane such that none of the edges intersect. If a planar graph $G$ is drawn in this way, it divides the plane into regions called {\it faces}.
The number of faces is denoted by $f(G)$. The famous Euler formula  states that for any planar graph $G$, we have $$|E(G)|-|V(G)|=f(G)-2.$$
If we assume that every edge of $G$ belongs to at most one induced cycle, then there is a one-to-one correspondence between induced cycles and bounded faces of $G$.  Since there is exactly one unbounded face of $G$, it follows that $f(G)=\mathfrak{t}(G)+1$.

However, it is worth noting that the formula $f(G)=\mathfrak{t}(G)+1$ does not hold in general. For example, if $G$ is the complete graph with 4 vertices, then $G$ is planar, but $f(G)=\mathfrak{t}(G)=4$.

We are now ready to present the proof of Theorem~\ref{main4}.
\begin{proof}

(1) This is a combination of Propositions~\ref{total1}, \ref{total2} and Proposition~\ref{total3}.

(2) It follows immediately from (1).

(3)  Since $G$ is a compact graph, $G$ is a planar graph and every edge of $G$ belongs to at most one induced cycle. Hence, because of the discussion above, we have $$|E(G)|-|V(G)|=\mathfrak{t}(G)-1.$$
This implies \begin{equation*}\begin{split}
\mathrm{depth}(\mathbb{K}[E(G)]/\mathrm{in}_{<}(I_G))&=|E(G)|-\mathrm{pdim}(\mathbb{K}[E(G)]/\mathrm{in}_{<}(I_G))\\
&=|V(G)|=\mathrm{dim}(\mathbb{K}[G])\\&=\mathrm{dim}(\mathbb{K}[E(G)]/\mathrm{in}_{<}(I_G)).\end{split}\end{equation*}
Here, the second last equality follows from  \cite[Corollary 10.1.21]{V1}. Hence, by definition, $\mathbb{K}[E(G)]/\mathrm{in}_{<}(I_G)$ is Cohen-Macaulay.

(4) This is a combination of Propositions~\ref{reg1}, \ref{reg2} and Proposition~\ref{reg3}.
 \end{proof}

\section{Cohen-Macaulay types and top graded Betti numbers}

Assume that $G$ is a compact graph. In this section we will compute the top graded Betti numbers of $\mathbb{K}[G]$. Since $\mathbb{K}[G]$ is Cohen-Macaulay, the regularity of $\mathbb{K}[G]$ is determined by its top graded Betti numbers. Therefore, the regularity formula given in Section 4 could also be deduced from the results of this section.
To present the top graded Betti numbers of $\mathbb{K}[G]$, we need to consider three cases.
 The most complex case is when $G$  is a compact graph of type 3, and we will provide detailed proof specifically for this case. The proofs for the cases when $G$  is a compact graph of type one or type two are similar, with only minor differences, so we will only provide an outline of the proofs for those cases.

The top graded Betti numbers of the edge rings of three types of compact graphs are presented in Propositions~\ref{three}, \ref{one} and Proposition~\ref{two}, respectively. By combining the aforementioned results and their proofs, the following conclusion regarding the top total Betti numbers can be immediately derived.

\begin{Theorem}\label{5.1} Let $G$ be a compact graph, and let $I_G$ be  the toric ideal of $\mathbb{K}[G]$.  Denote by $J_G$  the initial ideal of $I_G$ with respect to the order given in Section 3. Then $I_G$ and $J_G$ share the same top graded Betti numbers. In particular, we have $\mathrm{type} (\mathbb{K}[G])=\mathfrak{t}(G)-1$.
\end{Theorem}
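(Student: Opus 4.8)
The statement has two halves: (i) $I_G$ and $J_G$ share the same top graded Betti numbers, and (ii) $\mathrm{type}(\mathbb{K}[G]) = \mathfrak{t}(G) - 1$. For (ii), I would note that $\mathrm{type}(\mathbb{K}[G]) = \beta_p(\mathbb{K}[G])$ where $p = \mathrm{pdim}(\mathbb{K}[G]) = \mathfrak{t}(G) - 1$ by Corollary~\ref{main41}. Since $\mathbb{K}[E(G)]/J_G$ is Cohen-Macaulay (Theorem~\ref{main4}(3)) of the same projective dimension $\mathfrak{t}(G)-1$, and $\beta_p(\mathbb{K}[G]) = \beta_{p-1}(I_G)$, $\beta_p(\mathbb{K}[E(G)]/J_G) = \beta_{p-1}(J_G)$, it suffices to show $\beta_{p-1}(I_G) = \beta_{p-1}(J_G)$, i.e. part (i) at homological position $p-1$, and then to evaluate this top total Betti number explicitly using Propositions~\ref{total1}, \ref{total2}, \ref{total3}. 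Plugging $i = p-2 = \mathfrak{t}(G) - 3$ into the formula $\beta_i(J_G) = (i+1)\binom{\mathfrak{t}(G)}{i+2}$ gives $\beta_{\mathfrak{t}(G)-3}(J_G) = (\mathfrak{t}(G)-2)\binom{\mathfrak{t}(G)}{\mathfrak{t}(G)-1} = (\mathfrak{t}(G)-2)\mathfrak{t}(G)$ — wait, that is the total Betti number one step below the top; the top one is $\beta_{\mathfrak{t}(G)-2}(J_G) = (\mathfrak{t}(G)-1)\binom{\mathfrak{t}(G)}{\mathfrak{t}(G)} = \mathfrak{t}(G)-1$. So $\mathrm{type}(\mathbb{K}[G]) = \mathfrak{t}(G)-1$ once (i) is known.

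**The core comparison (i).** The general principle is that passing to an initial ideal can only increase Betti numbers: $\beta_{i,j}(I_G) \le \beta_{i,j}(J_G)$ for all $i,j$. At the top homological degree, however, one has equality whenever the quotients are Cohen-Macaulay of the same dimension and codimension. Concretely, I would argue as follows. Both $R/I_G$ and $R/J_G$ (with $R = \mathbb{K}[E(G)]$) are Cohen-Macaulay of the same dimension $|V(G)|$ (Theorem~\ref{main4}(3) and the standard fact that $\dim R/I_G = \dim R/J_G$), hence of the same projective dimension $p = \mathfrak{t}(G)-1$. The top Betti module is the canonical module up to a shift: by Lemma~\ref{can}, $\beta_{p,j}(R/I_G) = \beta_{0,n-j}(\omega_{R/I_G})$ and likewise for $J_G$. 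The alternating sum of graded Betti numbers is the numerator of the Hilbert series, which is invariant under taking initial ideals; combined with $\beta_{i,j}(R/I_G)\le\beta_{i,j}(R/J_G)$ termwise and Cohen-Macaulayness forcing the resolutions to have the same length, a standard rigidity argument (cf. the consecutive-cancellation / Hilbert-series comparison) pins down the top row $\beta_{p,\bullet}$ to be equal. Alternatively, and perhaps cleaner for the paper: Section~5 will compute the minimal generators of the canonical module $\omega_{\mathbb{K}[G]}$ directly from the edge cone $\RR_+(G)$ via the integer points in $\mathrm{relint}(\RR_+(G))$ — this is intrinsic to $G$ and does not see the monomial order — and compare the count with $\beta_0(\omega_{R/J_G}) = \beta_{p-1}(J_G)$ computed above; agreement of the two numbers, together with the termwise inequality $\beta_{p,j}(R/I_G)\le\beta_{p,j}(R/J_G)$, forces equality in every degree.

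**Order of execution and the main obstacle.** I would: first record the numerology — $\mathrm{pdim} = \mathfrak{t}(G)-1$ and $\beta_{p-1}(J_G) = \mathfrak{t}(G)-1$ from Section~4; second, establish the termwise inequality $\beta_{i,j}(R/I_G) \le \beta_{i,j}(R/J_G)$ (standard, via the flat degeneration / Gröbner deformation); third, invoke the common Hilbert function (initial ideals preserve it) plus common Cohen-Macaulayness and common dimension to deduce that the top graded Betti rows must coincide; fourth, identify $\beta_{p,j}(R/J_G)$ with the degrees of the minimal generators of $\omega_{R/J_G}$ and cross-check against the canonical-module computation of Propositions~\ref{three}, \ref{one}, \ref{two}; finally conclude $\mathrm{type}(\mathbb{K}[G]) = \mathfrak{t}(G)-1$. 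The main obstacle is the rigidity step: one must be careful that the inequality $\beta_{i,j}(I_G)\le\beta_{i,j}(J_G)$ does not by itself force equality at the top — one genuinely needs the Cohen-Macaulay hypothesis on \emph{both} sides (which is why Theorem~\ref{main4}(3) is doing real work) so that both minimal free resolutions have exactly length $p$ and the top modules are the canonical modules; the degree-by-degree comparison then follows because the two canonical modules have the same Hilbert function (being, respectively, $\mathrm{Ext}^p$ of objects with the same graded Hilbert function into the same shifted $R$) and one dominates the other term by term in Betti numbers. This last point is precisely where the explicit computations of Section~5 provide the safety net, so in the actual write-up I would lean on them rather than on an abstract rigidity lemma.
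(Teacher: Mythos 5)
Your fallback route is, in substance, the paper's actual proof: exhibit $\mathfrak{t}(G)-1$ minimal generators of $\omega_{\mathbb{K}[G]}$ explicitly from the lattice points of $\mathrm{relint}(\RR_+(G))$ (this is the content of Propositions~\ref{three}, \ref{one}, \ref{two} and the cone computations preceding them), note that $\mathrm{type}(\mathbb{K}[G])=\beta_p(\mathbb{K}[G])\le\beta_p(\mathbb{K}[E(G)]/J_G)=\mathfrak{t}(G)-1$ by the termwise inequality under Gr\"obner degeneration and Theorem~\ref{main4}(1), conclude that the exhibited generators are \emph{all} of them, read off the graded pieces via Lemma~\ref{can}, and then use $\beta_{p,j}(R/I_G)\le\beta_{p,j}(R/J_G)$ together with equal totals to force equality in every degree $j$. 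So the plan, as you say you would finally write it, is correct and coincides with the paper's.

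However, the ``standard rigidity argument'' you lead with is not a theorem, and you should not present the explicit canonical-module computation as a mere safety net: it is the proof. It is false in general that if $R/I$ and $R/\mathrm{in}_<(I)$ are both Cohen--Macaulay of the same dimension, have the same Hilbert series, and satisfy $\beta_{i,j}(R/I)\le\beta_{i,j}(R/\mathrm{in}_<(I))$ termwise, then the top graded Betti rows agree. For instance, take $I=(x^2-y^2,\,xy)\subset \mathbb{K}[x,y]$ with the lexicographic order, $x>y$; then $\mathrm{in}_<(I)=(x^2,xy,y^3)$, both quotients are Artinian (hence Cohen--Macaulay of the same dimension and codimension) with the same Hilbert function, the termwise inequality holds with a single consecutive cancellation between homological degrees $1$ and $2$ in internal degree $3$, yet $R/I$ is Gorenstein of type $1$ while $R/\mathrm{in}_<(I)$ has type $2$. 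Equal Hilbert functions of the two canonical modules likewise do not pin down their minimal numbers of generators. So nothing abstract forces $\beta_{p,\bullet}(R/I_G)=\beta_{p,\bullet}(R/J_G)$; what does force it is the \emph{lower} bound $\mathrm{type}(\mathbb{K}[G])\ge\mathfrak{t}(G)-1$ obtained by actually producing that many minimal generators of $\omega_{\mathbb{K}[G]}$ and verifying minimality against the inequality description \eqref{eq:ineq} of the edge cone. That verification (regular vertices, the list of fundamental sets, and the argument that each $\alpha_\ell$ cannot split off a nonzero cone element) is the substantive work of Section~5 and cannot be omitted from, or deferred out of, the proof of Theorem~\ref{5.1}.
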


\subsection{type three} Let $C$ denote the compact graph $C_{\underline{p}:\underline{q}:\underline{r}}$, whose vertex set $V(C)$ and edge set $E(C)$ are given explicitly in Subsection~\ref{3.3}.  In this subsection, we compute the minimal generators of the canonical module $\omega_{\mathbb{K}[C]}$ and then determine the top graded Betti numbers of the toric ring $\mathbb{K}[C]$.

 It is easy to see that  $|V(C)|=2\sum\limits_{i=1}^{m}p_{i}+2\sum\limits_{i=1}^{n}q_{i}+2\sum\limits_{i=1}^{k}r_{i}+3$.
We use the following notions for all the  entries of $\RR^{|V(C)|}$:
\begin{align*}
\RR^{|V(C)|}=\{\sum\limits_{i=1}^{m}\sum\limits_{j=1}^{2p_{i}}{a_{i,j}\ub_{i,j}}+a\ub+
\sum\limits_{i=1}^{n}\sum\limits_{j=1}^{2q_{i}}{b_{i,j}\vb_{i,j}}+b\vb+\sum\limits_{i=1}^{k}\sum\limits_{j=1}^{2r_{i}}{c_{i,j}\wb_{i,j}}+c\wb \mid \\
a_{i,j}, a,b_{i,j}, b,c_{i,j}, c \in \RR \mbox{ for all } i,j \},
\end{align*}
where $\ub,\ub_{i,j},\vb,\vb_{i,j},\wb,\wb_{i,j}$ are the unit vectors of $\RR^{|V(C)|}$,
each $\ub_{i,j}$ (resp. $\vb_{i,j}, \wb_{i,j}$) corresponds to $u_{i,j}$ (resp. $v_{i,j}, w_{i,j}$) (where $1\leq i\leq m$ and $1\leq j\leq 2p_{i}$) (resp. $1\leq i\leq n$ and $1\leq j\leq 2q_{i}$, $1\leq i\leq k$ and $1\leq j\leq 2r_{i}$) and $\ub$ (resp. $\vb, \wb$ ) corresponds to $u$ (resp. $v,w$).

In what follows, we will construct $m+n+k$ integral vectors in $\RR^{|V(C)|}$ and then show that they are minimal vectors of $\mathrm{relint}(\RR_+(C))\cap \mathbb{Z}^{|V(C)|}$. Here, an integral vector in $\mathrm{relint}(\RR_+(C))$ is called {\it minimal} if it cannot written as the sum of a vector in $\mathrm{relint}(\RR_+(C))\cap \mathbb{Z}^{|V(C)|}$ and a nonzero vector of  $\RR_+(C)\cap \mathbb{Z}^{|V(C)|}$.
The construction is as follows:

For $\ell=1,\ldots,m$, let $$\alpha_{\ell}:=\sum\limits_{i=1}^{m}\sum\limits_{j=1}^{2p_{i}}\ub_{i,j}+\sum\limits_{i=1}^{n}\sum\limits_{j=1}^{2q_{i}}\vb_{i,j}+
\sum\limits_{i=1}^{k}\sum\limits_{j=1}^{2r_{i}}\wb_{i,j}+\vb+\wb+2{\ell}\ub.$$

For ${\ell}=1,\ldots,n$, let $$\beta_{\ell}=\sum\limits_{i=1}^{m}\sum\limits_{j=1}^{2p_{i}}\ub_{i,j}+\sum\limits_{i=1}^{n}\sum\limits_{j=1}^{2q_{i}}\vb_{i,j}+
\sum\limits_{i=1}^{k}\sum\limits_{j=1}^{2r_{i}}\wb_{i,j}+\wb+\ub+2{\ell}\vb.$$

For ${\ell}=1,\ldots,k$, let $$\gamma_{\ell}=\sum\limits_{i=1}^{m}\sum\limits_{j=1}^{2p_{i}}\ub_{i,j}+\sum\limits_{i=1}^{n}\sum\limits_{j=1}^{2q_{i}}\vb_{i,j}+
\sum\limits_{i=1}^{k}\sum\limits_{j=1}^{2r_{i}}\wb_{i,j}+\ub+\vb+2{\ell}\wb.$$

We now verify that $\alpha_{\ell}, \beta_{\ell}, \gamma_{\ell} \in \mathrm{relint} \RR_+(C)$ for all possible $\ell$.
For this, we put $u_i^{(1)}=\{u_{i,j}\mid  j=1,3,\ldots,2p_i-1\}$ for $i=1,\ldots, m$, $u_i^{(2)}=\{u_{i,j}\mid  j=2,4,\ldots,2p_i\}$ for $i=1,\ldots, m$ and $v_i^{(1)}, v_i^{(2)},w_i^{(1)}, w_i^{(2)}$ are defined similarly.

We see the following:

\begin{itemize}
\item Each of $u_{i,j}$'s, $v_{i,j}$'s and $w_{i,j}$'s is a regular vertex of $C$, while $u$, $v$ and $w$ are not.
\item An independent  subset $T$ of $V(C)$ is fundamental if and only if  $T$ is one of the following sets:
    \begin{enumerate}
      \item [$(i)$] $\bigcup\limits_{i=1}^{m}u^{(f_i)}_{i}$, where $(f_1,\ldots, f_m)\in \{1,2\}^m;$
      \vspace {1mm}

      \item [$(ii)$] $\bigcup\limits_{i=1}^{n}v^{(g_i)}_{i}$, where $(g_1,\ldots, g_n)\in \{1,2\}^n;$
      \vspace {1mm}

      \item [$(iii)$] $\bigcup\limits_{i=1}^{k}w^{(h_i)}_{i}$, where $(h_1,\ldots, h_k)\in \{1,2\}^k;$
      \vspace {1mm}

      \item [$(iv)$] $\{u\}\ \cup\ \bigcup\limits_{i=1}^{m}(u^{(f_i)}_{i}\setminus \{u_{i,1},u_{i,2p_i}\})\ \cup\ \bigcup\limits_{i=1}^{n}v^{(g_i)}_{i}\ \cup\ \bigcup\limits_{i=1}^{k}w^{(h_i)}_{i}\}$, where $(f_1,\ldots,f_m)\in \{1,2\}^m$, $(g_1,\ldots,g_n)\in \{1,2\}^n$ and  $(h_1,\ldots,h_k)\in \{1,2\}^k$;
          \vspace {1mm}

      \item [$(v)$] $\{v\}\cup\bigcup\limits_{i=1}^{n}(v^{(g_i)}_{i}\setminus \{v_{i,1},v_{i,2p_i}\})\cup\bigcup\limits_{i=1}^{m}u^{(f_i)}_{i}\cup\bigcup\limits_{i=1}^{k}w^{(h_i)}_{i}$ , where $(f_1,\ldots,f_m)\in \{1,2\}^m$, $(g_1,\ldots,g_n)\in \{1,2\}^n$ and  $(h_1,\ldots,h_k)\in \{1,2\}^k$;
          \vspace {1mm}

      \item [$(vi)$] $\{w\}\cup\bigcup\limits_{i=1}^{k}(w^{(h_i)}_{i}\setminus \{w_{i,1},w_{i,2p_i}\})\cup\bigcup\limits_{i=1}^{m}u^{(f_i)}_{i}\cup\bigcup\limits_{i=1}^{n}v^{(g_i)}_{i}$, where $(f_1,\ldots,f_m)\in \{1,2\}^m$, $(g_1,\ldots,g_n)\in \{1,2\}^n$ and  $(h_1,\ldots,h_k)\in \{1,2\}^k$.
    \end{enumerate}

\end{itemize}
It should be noted that there are $2^{m}$ fundamental sets in $(i)$ and $2^{m+n+k}$ fundamental sets in $(iv)$, and so on.
Hence, it follows from \eqref{eq:ineq} (see this in Subsection 1.3) that a vector of $\RR^{|V(C)|}$ of the form:
$$\sum\limits_{i=1}^{m}\sum\limits_{j=1}^{2p_{i}}{a_{i,j}\ub_{i,j}}+a\ub+
\sum\limits_{i=1}^{n}\sum\limits_{j=1}^{2q_{i}}{b_{i,j}\vb_{i,j}}+b\vb+\sum\limits_{i=1}^{k}\sum\limits_{j=1}^{2r_{i}}{c_{i,j}\wb_{i,j}}+c\wb$$ belongs to $ \RR_+(C)$ if and only if the following inequalities are satisfied:

\begin{enumerate}\label{inequality}
\item $a_{i,j} \geq 0$   for any $1\leq i\leq m$ and  $1\leq j\leq 2p_{i};$
\item $b_{i,j} \geq 0$   for any $1\leq i\leq n$ and  $1\leq j\leq 2q_{i};$
\item $c_{i,j} \geq 0$   for any $1\leq i\leq k$ and  $1\leq j\leq 2k_{i};$

\item $\sum\limits_{i=1}^{m}{\sum\limits_{j=1}^{2p_{i}}a_{i,j}}-\sum\limits_{u_{i,j}\in T}a_{i,j}+a \geq \sum\limits_{u_{i,j}\in T}a_{i,j}$ for any $T\in(i)$;
\vspace{1.5mm}

\item $\sum\limits_{i=1}^{n}{\sum\limits_{j=1}^{2q_{i}}b_{i,j}}-\sum\limits_{v_{i,j}\in T}b_{i,j}+b \geq \sum\limits_{v_{i,j}\in T}b_{i,j}$ for any $T\in(ii)$;
\vspace{1.5mm}

\item $\sum\limits_{i=1}^{k}{\sum\limits_{j=1}^{2r_{i}}c_{i,j}}-\sum\limits_{w_{i,j}\in T}c_{i,j}+c \geq \sum\limits_{w_{i,j}\in T}c_{i,j}$ for any $T\in(iii)$;
\vspace{1.5mm}

\item $\sum+b+c\geq a+2(\sum\limits_{u_{i,j}\in T}a_{i,j}+\sum\limits_{v_{i,j}\in T}b_{i,j}+\sum\limits_{w_{i,j}\in T}c_{i,j})$ for any $T\in(iv)$;
    \vspace{1.5mm}

\item $\sum+c+a\geq b+2(\sum\limits_{u_{i,j}\in T}a_{i,j}+\sum\limits_{v_{i,j}\in T}b_{i,j}+\sum\limits_{w_{i,j}\in T}c_{i,j})$ for any $T\in(v)$;
    \vspace{1.5mm}

\item $\sum+a+b\geq  c+2(\sum\limits_{u_{i,j}\in T}a_{i,j}+\sum\limits_{v_{i,j}\in T}b_{i,j}+\sum\limits_{w_{i,j}\in T}c_{i,j})$ for any $T\in(vi)$.
\end{enumerate}

Here, $\sum$ denotes  $\sum\limits_{i=1}^{m}{\sum\limits_{j=1}^{2p_{i}}a_{i,j}}+\sum\limits_{i=1}^{n}{\sum\limits_{j=1}^{2q_{i}}b_{i,j}}+\sum\limits_{i=1}^{k}{\sum\limits_{j=1}^{2r_{i}}c_{i,j}}$.
It is straightforward to check that $\alpha_{\ell}$ satisfies these inequalities, with strict inequalities holding for each $\alpha_{\ell}$. This implies that $\alpha_{\ell} \in \mathrm{relint}(\RR_+(C))\cap \mathbb{Z}^{|V(C)|}$.

Next, we show that $\alpha_{\ell}$ is a minimal vector  in $\mathrm{relint}(\RR_+(C))\cap \mathbb{Z}^{|V(C)|}$, i.e., it cannot be written
as a sum of an element in $\mathrm{relint}(\RR_+(C))\cap \mathbb{Z}^{|V(C)|}$ and an element in $\RR_+(C)\cap \mathbb{Z}^{|V(C)|} \setminus \{{\bf 0}\}$ for all $\ell=1,\ldots,m$.

Suppose on the contrary  that $\alpha_{\ell}=\alpha' + \alpha''$ for some $\alpha' \in \mathrm{relint}(\RR_+(C))\cap \mathbb{Z}^{|V(C)|}$ and $\alpha'' \in \RR_+(C)\cap \mathbb{Z}^{|V(C)|} \setminus \{{\bf 0}\}$.
Write
$$\alpha'=\sum\limits_{i=1}^{m}\sum\limits_{j=1}^{2p_{i}}{a'_{i,j}\ub_{i,j}}+a'\ub+
\sum\limits_{i=1}^{n}\sum\limits_{j=1}^{2q_{i}}{b'_{i,j}\vb_{i,j}}+b'\vb+\sum\limits_{i=1}^{k}\sum\limits_{j=1}^{2r_{i}}{c'_{i,j}\wb_{i,j}}+c'\wb,$$
$$\alpha''=\sum\limits_{i=1}^{m}\sum\limits_{j=1}^{2p_{i}}{a''_{i,j}\ub_{i,j}}+a''\ub+
\sum\limits_{i=1}^{n}\sum\limits_{j=1}^{2q_{i}}{b''_{i,j}\vb_{i,j}}+b''\vb+\sum\limits_{i=1}^{k}\sum\limits_{j=1}^{2r_{i}}{c''_{i,j}\wb_{i,j}}+c''\wb.$$

In view of the inequalities (1) - (3), we see that $a'_{i,j}, b'_{i,j}, c'_{i,j}\geq 1$ for all $i,j$. Because of the inequalities (4) - (6), we also see that $a', b', c'\geq 1$. Hence, $a''_{i,j}= b''_{i,j}= c''_{i,j}= b''= c''  = 0$ for all possible $i,j$.
From this together with (7) it follows that $a''\leq   0$.  Hence, $\alpha''=0.$ This is a contradiction, which shows that $\alpha_{\ell}$ is a minimal vector in $\mathrm{relint}(\RR_+(C))\cap \mathbb{Z}^{|V(C)|}$ for $\ell=1, \ldots, m$. Likewise,  so are $\beta_{\ell}$'s and $\gamma_{\ell}$'s.

\begin{Proposition} \label{three} Let $C$ be defined as before. Assume $m\leq n\leq k$. Then  $\mathrm{type} (\mathbb{K}[C])=m+n+k$,  and the top graded Betti numbers  of $\mathbb{K}[C]$ are given by $$\beta_{m+n+k,j}(\mathbb{K}[C])=\left\{
                                                        \begin{array}{ll}
                                                          1, & \hbox{$j=\mathrm{mat}(C)+n+m+\ell, \quad \ell=1,\ldots,k-n$;} \\
                                                           2, & \hbox{$j=\mathrm{mat}(C)+m+k+\ell, \quad \ell=1,\ldots,n-m$;} \\
                                                         3, & \hbox{$j=\mathrm{mat}(C)+k+n+\ell, \quad \ell=1,\ldots,m$;} \\
                                                          0, & \hbox{otherwise.}
                                                        \end{array}
                                                      \right.$$
                                                      \end{Proposition}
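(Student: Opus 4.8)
The plan is to compute the minimal generators of the canonical module $\omega_{\mathbb{K}[C]}$ and then transport the answer to the top graded Betti numbers of $\mathbb{K}[C]$ via graded local duality (Lemma~\ref{can}). Recall from Subsection~1.3 that, since $C$ satisfies the odd-cycle condition, $\mathbb{K}[C]$ is Cohen--Macaulay and $\omega_{\mathbb{K}[C]}$ is the monomial ideal of $\mathbb{K}[C]$ generated by all $x^{\alpha}$ with $\alpha\in\mathbb{Z}^{|V(C)|}\cap\mathrm{relint}(\RR_+(C))$; so its minimal monomial generators are exactly the $x^{\alpha}$ for $\alpha$ a minimal vector of $\mathbb{Z}^{|V(C)|}\cap\mathrm{relint}(\RR_+(C))$. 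In the discussion preceding this proposition we already verified that the $m+n+k$ vectors $\alpha_1,\dots,\alpha_m$, $\beta_1,\dots,\beta_n$, $\gamma_1,\dots,\gamma_k$ are such minimal vectors, and they are pairwise distinct; hence I get immediately that $\mathrm{type}(\mathbb{K}[C])=\beta_0(\omega_{\mathbb{K}[C]})\ge m+n+k$.

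For the reverse inequality I would use the standard comparison $\beta_{i,j}(\mathbb{K}[E(C)]/I_C)\le\beta_{i,j}(\mathbb{K}[E(C)]/\mathrm{in}_{<}(I_C))$, valid for every monomial order. Taking $<$ as in Subsection~\ref{3.3} and writing $p:=\mathrm{pdim}(\mathbb{K}[C])=\mathfrak{t}(C)-1=m+n+k$ (Corollary~\ref{main41}, together with $\mathfrak{t}(C)=m+n+k+1$), this gives $\beta_{p}(\mathbb{K}[C])\le\beta_{p}(\mathbb{K}[E(C)]/J_C)=\beta_{p-1}(J_C)$, which by Theorem~\ref{main4}(1) equals $(m+n+k)\binom{m+n+k+1}{m+n+k+1}=m+n+k$. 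Comparing with the previous paragraph, $\mathrm{type}(\mathbb{K}[C])=\beta_{p}(\mathbb{K}[C])=m+n+k$, and, since they already account for this total, $\{x^{\alpha_\ell}\}\cup\{x^{\beta_\ell}\}\cup\{x^{\gamma_\ell}\}$ must be the \emph{complete} set of minimal generators of $\omega_{\mathbb{K}[C]}$.

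The graded statement then follows by bookkeeping. Each edge variable has degree $1$ in $\mathbb{K}[C]$, so a monomial $x^{\alpha}\in\mathbb{K}[C]$ has degree $\tfrac{1}{2}\sum_i\alpha_i$; summing the coordinates of $\alpha_\ell,\beta_\ell,\gamma_\ell$ and invoking Proposition~\ref{reg3} I would get $\deg x^{\alpha_\ell}=\mathrm{mat}(C)+\ell$ for $1\le\ell\le m$, $\deg x^{\beta_\ell}=\mathrm{mat}(C)+\ell$ for $1\le\ell\le n$, and $\deg x^{\gamma_\ell}=\mathrm{mat}(C)+\ell$ for $1\le\ell\le k$. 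Consequently $\beta_{0,j}(\omega_{\mathbb{K}[C]})$ is the number of the index ranges $[1,m]$, $[1,n]$, $[1,k]$ containing $j-\mathrm{mat}(C)$, which, because $m\le n\le k$, equals $3$ on $[\mathrm{mat}(C)+1,\mathrm{mat}(C)+m]$, equals $2$ on $[\mathrm{mat}(C)+m+1,\mathrm{mat}(C)+n]$, equals $1$ on $[\mathrm{mat}(C)+n+1,\mathrm{mat}(C)+k]$, and is $0$ elsewhere. Finally, Lemma~\ref{can} gives $\beta_{p,j}(\mathbb{K}[C])=\beta_{0,N-j}(\omega_{\mathbb{K}[C]})$ with $N:=|E(C)|=2\sum p_i+2\sum q_i+2\sum r_i+m+n+k+3$; substituting $N-\mathrm{mat}(C)=\mathrm{mat}(C)+m+n+k+1$ and reflecting each $\ell$-range converts the multiplicities $3,2,1$ into precisely the three cases of the asserted formula.

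The step I expect to be the main obstacle is the completeness claim: that $\alpha_1,\dots,\alpha_m,\beta_1,\dots,\beta_n,\gamma_1,\dots,\gamma_k$ exhaust the minimal vectors of $\mathrm{relint}(\RR_+(C))\cap\mathbb{Z}^{|V(C)|}$. The route above sidesteps a direct lattice-point analysis by importing the bound $\mathrm{type}(\mathbb{K}[C])\le m+n+k$ from the initial ideal. A self-contained alternative would be to argue from the inequalities (1)--(9) of this subsection that a minimal vector must have every "path" coordinate equal to $1$ and at least two of $a,b,c$ equal to $1$, forcing it to be one of the listed vectors; but tracking how subtracting an alternating sum of edge vectors $v_f$ along a petal affects the fundamental-set inequalities is delicate, so the degeneration argument is cleaner. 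Everything else reduces to the Koszul/binomial identities already used in Section~4 and the reflection of indices in Lemma~\ref{can}.
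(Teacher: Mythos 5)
Your proposal is correct and follows essentially the same route as the paper: lower-bound the type by the $m+n+k$ minimal vectors of $\mathrm{relint}(\RR_+(C))\cap\mathbb{Z}^{|V(C)|}$ already constructed, upper-bound it by the top total Betti number of the initial ideal from Theorem~\ref{main4}, conclude the generator list of $\omega_{\mathbb{K}[C]}$ is complete, and then convert the degrees of the generators into the top graded Betti numbers via Lemma~\ref{can} using $|E(C)|$ and $\mathrm{mat}(C)$. The only difference is cosmetic: you make explicit the semicontinuity $\beta_{i,j}(\mathbb{K}[C])\le\beta_{i,j}(\mathbb{K}[E(C)]/J_C)$ that the paper invokes implicitly, and your degree bookkeeping and index reflection agree with the paper's computation.
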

\begin{proof} Every minimal vector in $\mathrm{relint}(\RR_+(C))\cap \mathbb{Z}^{|V(C)|}$ corresponds to a minimal generator of $\omega_{\mathbb{K}[C]}$. It follows from the above discussion that $\mathrm{type} (\mathbb{K}[C])\geq m+n+k$.
 Since  $\mathrm{type} (\mathbb{K}[C])$ is equal to  the top total Betti number of $\mathbb{K}[C]$,
 we conclude that $\mathrm{type} (\mathbb{K}[C])\leq m+n+k$ by Theorem~\ref{main4}. Thus the first conclusion follows. From this, we see that $\alpha_{1},\ldots, \alpha_m, \beta_1,\ldots,\beta_n,\gamma_1,\ldots,\gamma_{k}$ are all the minimal vectors of $\mathrm{relint}(\RR_+(C))\cap \mathbb{Z}^{|V(C)|}$. Therefore, the set of  monomials  $$\{x^{\alpha_{\ell}},\ \ell=1,\ldots,m;\quad x^{\beta_{\ell}},\ \ell=1,\ldots, n;\quad  x^{\gamma_\ell},\ \ell=1,\ldots,k\}$$ is a minimal generating set of $\omega_{\mathbb{K}[C]}$, which is an ideal of the edge ring $\mathbb{K}[C]\subset \mathbb{K}[V(C)]$. Note that every monomial $x^\alpha$ belonging to $\mathbb{K}[C]$, which is  regarded as a graded  module over the standard graded ring $\mathbb{K}[E(C)]$,  has a degree of $\frac{1}{2}|\alpha|$.  Hence,
 $$\beta_{0,j}(\omega_{\mathbb{K}[C]})=\left\{
                                                        \begin{array}{ll}
                                                          3, & \hbox{$j=\sum\limits_{i=1}^mp_i+\sum\limits_{i=1}^nq_i+\sum\limits_{i=1}^kr_i+1+\ell, \ell=1,\ldots,m$;} \\
                                                           2, & \hbox{$j=\sum\limits_{i=1}^mp_i+\sum\limits_{i=1}^nq_i+\sum\limits_{i=1}^kr_i+1+\ell, \ell=m+1,\ldots,n$;} \\
                                                         1, & \hbox{$j=\sum\limits_{i=1}^mp_i+\sum\limits_{i=1}^nq_i+\sum\limits_{i=1}^kr_i+1+\ell, \ell=n+1,\ldots,k$;} \\
                                                          0, & \hbox{otherwise.}
                                                        \end{array}
                                                      \right.$$
                                                     Since  $|E(C)|=2(\sum\limits_{i=1}^mp_i+\sum\limits_{i=1}^nq_i+\sum\limits_{i=1}^kr_i)+m+n+k+3$ and $\mathrm{mat}(C)=\sum\limits_{i=1}^mp_i+\sum\limits_{i=1}^nq_i+\sum\limits_{i=1}^kr_i+1$, the second conclusion  follows by Lemma~\ref{can}.
\end{proof}

\subsection{type one} Let $A$ denote the compact graph $A_{\underline{p}}$, whose vertex set $V(A)$ and edge set $E(A)$ are given explicitly in Subsection~\ref{3.1}.  Then $|V(A)|=2\sum\limits_{i=1}^{m}p_i+1$. We may write $$\RR^{|V(A)|}=\{\sum\limits_{i=1}^{m}\sum\limits_{j=1}^{2p_{i}} a_{i,j}\ub_{i,j}+a\ub\mid\mbox{ all } a_{i,j},a\in \RR \}.$$ Here, $\ub_{i,j}, \ub$ correspond the vertices of $A$ in a natural way. Then, we could show the following vectors
 $$\alpha_{\ell}:=\sum\limits_{i=1}^{m}\sum\limits_{j=1}^{2p_{i}}\ub_{i,j}+2\ell \ub,\  \ell=1,\ldots,m-1$$
 are all the minimal vectors of $\mathrm{relint}(\RR_+(A))\cap \ZZ^{|V(A)|}$.

\begin{Proposition} \label{one} Let $A$ denote the compact graph  $A_{\underline{p}}$. Then  $\mathrm{type} (\mathbb{K}[A])=m-1$,  and the top graded Betti numbers  of $\mathbb{K}[A]$ are given by $$\beta_{m-1,j}(\mathbb{K}[A])=\left\{
                                                        \begin{array}{ll}
                                                          1, & \hbox{$j=\mathrm{mat}(A)+\ell,\ \ell=1,\ldots,m-1$;} \\

                                                          0, & \hbox{otherwise.}
                                                        \end{array}
                                                      \right.$$
                                                     \end{Proposition}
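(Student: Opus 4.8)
The plan is to mirror, for the type one graph $A=A_{\underline p}$, the argument just carried out for the type three case. First I would record the combinatorial input: the regular vertices of $A$ are exactly the $u_{i,j}$'s (each connected component of $A\setminus u_{i,j}$ contains an odd cycle, since removing an interior vertex of one odd cycle leaves that cycle broken but the remaining component still carries the other odd cycles through $u$, and the small path-component is absorbed—more precisely one checks directly), while $u$ itself is not regular because $A\setminus u$ splits into $m$ paths, none containing a cycle. The fundamental independent sets are exactly the sets $\bigcup_{i=1}^m u_i^{(f_i)}$ with $(f_1,\dots,f_m)\in\{1,2\}^m$, together with the sets of the form $\{u\}\cup\bigcup_{i=1}^m (u_i^{(f_i)}\setminus\{u_{i,1},u_{i,2p_i}\})$; this is the specialization of the list in the type three discussion when $n=k=0$. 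Feeding these into \eqref{eq:ineq}, a vector $\sum a_{i,j}\ub_{i,j}+a\ub$ lies in $\RR_+(A)$ iff $a_{i,j}\ge 0$ for all $i,j$, and $\sum_{i,j}a_{i,j}-\sum_{u_{i,j}\in T}a_{i,j}\ge \sum_{u_{i,j}\in T}a_{i,j}$ for every $T$ of the first kind, and $\sum_{i,j}a_{i,j}\ge a+2\sum_{u_{i,j}\in T}a_{i,j}$ for every $T$ of the second kind.

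Next I would verify that $\alpha_\ell=\sum_{i,j}\ub_{i,j}+2\ell\,\ub$ lies in $\mathrm{relint}(\RR_+(A))\cap\ZZ^{|V(A)|}$ for $\ell=1,\dots,m-1$: all the inequalities above hold strictly, the only nontrivial check being the second-kind inequality, where the left side is $2\sum_i p_i$, the term $2\sum_{u_{i,j}\in T}a_{i,j}$ is at most $2\sum_i(p_i-1)=2\sum_i p_i-2m$ (each $u_i^{(f_i)}\setminus\{u_{i,1},u_{i,2p_i}\}$ has $p_i-1$ elements), so the right side is at most $2\ell+2\sum_i p_i-2m\le 2\sum_i p_i-2<2\sum_i p_i$ precisely because $\ell\le m-1$. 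Then I would show minimality exactly as in the type three case: if $\alpha_\ell=\alpha'+\alpha''$ with $\alpha'\in\mathrm{relint}(\RR_+(A))\cap\ZZ^{|V(A)|}$ and $\alpha''\in\RR_+(A)\cap\ZZ^{|V(A)|}$, the first-kind and second-kind inequalities force $a'_{i,j}\ge 1$ for all $i,j$ (hence $a''_{i,j}=0$), and then the second-kind inequality applied to $\alpha''$ gives $a''\le 0$, so $\alpha''=0$. Conversely one should check $\alpha_m$ is \emph{not} minimal (it equals $\alpha_{m-1}$ plus the lattice point $2\ub+\text{(something in }\RR_+(A))$, or more cleanly $2\ub=v_{e_{1,1}}+v_{e_{1,2p_1+1}}-\sum_j \ub_{1,j}$ is not available, so instead note $\alpha_m-\alpha_{m-1}=2\ub$ and $\alpha_{m-1}\in\RR_+(A)$—I'd phrase this carefully), and that every minimal vector must have each $a_{i,j}\ge 1$ and $a$ even and positive, which together with the strict second-kind inequality pins $a\in\{2,\dots,2(m-1)\}$ and then forces all $a_{i,j}=1$; this identifies $\{\alpha_1,\dots,\alpha_{m-1}\}$ as the complete list.

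Finally I would conclude as in Proposition~\ref{three}: minimal vectors of $\mathrm{relint}(\RR_+(A))\cap\ZZ^{|V(A)|}$ are in bijection with minimal generators of $\omega_{\mathbb K[A]}$, giving $\mathrm{type}(\mathbb K[A])\ge m-1$; the reverse inequality $\mathrm{type}(\mathbb K[A])\le m-1$ comes from $\beta_{m-1}(\mathbb K[A])=\beta_{m-2}(J_m)$, which is $1$ by Proposition~\ref{total1} (with $i=m-2$, $(i+1)\binom{m}{i+2}=(m-1)\binom{m}{m}=m-1$—wait, this gives $m-1$, consistent), so $\mathrm{type}(\mathbb K[A])=m-1$ and $\{x^{\alpha_\ell}:\ell=1,\dots,m-1\}$ is a minimal generating set of $\omega_{\mathbb K[A]}$. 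Since $x^{\alpha_\ell}$ has degree $\tfrac12|\alpha_\ell|=\sum_i p_i+\ell$ as an $\mathbb K[E(A)]$-module element, $\beta_{0,j}(\omega_{\mathbb K[A]})=1$ for $j=\sum_i p_i+\ell$, $\ell=1,\dots,m-1$, and $0$ otherwise. Applying Lemma~\ref{can} with $p=\mathrm{pdim}(\mathbb K[A])=m-1$ (Corollary~\ref{main41}) and $|E(A)|=2\sum_i p_i+m$ and $\mathrm{mat}(A)=\sum_i p_i$, we get $\beta_{m-1,j}(\mathbb K[A])=\beta_{0,|E(A)|-j}(\omega_{\mathbb K[A]})$, which is $1$ exactly when $|E(A)|-j=\sum_i p_i+\ell$, i.e. $j=\sum_i p_i+m-\ell=\mathrm{mat}(A)+(m-\ell)$ for $\ell=1,\dots,m-1$, equivalently $j=\mathrm{mat}(A)+\ell'$ for $\ell'=1,\dots,m-1$. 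This is the asserted formula. The main obstacle is the fundamental-set bookkeeping—correctly identifying which independent sets are fundamental and not dropping the degenerate cases $p_i=1$ (where $u_i^{(f_i)}\setminus\{u_{i,1},u_{i,2p_i}\}$ is empty)—but since this is the $n=k=0$ specialization of the type three analysis already done in detail, it should go through with only cosmetic changes.
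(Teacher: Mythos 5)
Your proposal is correct and takes exactly the route the paper intends here: Proposition~\ref{one} is stated with only an outline because it is the $n=k=0$ specialization of the type-three argument, and your identification of the regular vertices and the two families of fundamental sets, the verification that the $\alpha_\ell$ lie in $\mathrm{relint}(\RR_+(A))\cap\ZZ^{|V(A)|}$ and are minimal there, the count $\mathrm{type}(\mathbb{K}[A])\le m-1$ via Proposition~\ref{total1}, and the final dualization through Lemma~\ref{can} with $|E(A)|=2\sum_i p_i+m$ all reproduce that template and give the stated formula. One transcription slip should be fixed: for a first-kind fundamental set $T=\bigcup_i u_i^{(f_i)}$ the neighborhood $N_A(T)$ contains $u$ (since $u_{i,1}$ or $u_{i,2p_i}$ lies in $T$ for each $i$), so the corresponding inequality is $a+\sum_{i,j}a_{i,j}-\sum_{u_{i,j}\in T}a_{i,j}\ge\sum_{u_{i,j}\in T}a_{i,j}$, matching item (4) of the type-three list; as you wrote it, without the $a$, this inequality holds with \emph{equality} at $\alpha_\ell$ and the relative-interior check would fail. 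Your subsequent steps (strictness for $\ell\ge 1$, and $a''\ge 0$ together with $a''\le 0$ forcing $\alpha''=0$) implicitly use the corrected form, so nothing else in the argument needs to change; the attribution of $a'_{i,j}\ge 1$ should likewise go to the strict regular-vertex inequalities rather than to the fundamental-set ones.
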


\subsection{type two}

Let $B^0$ and $B^s$ denote the compact graph $B_{\underline{p}:\underline{q}}^0$ and $B_{\underline{p}:\underline{q}}^s$ respectively. Their vertex sets $V(B^0)$ and $V(B^s)$ and edge sets $E(B^0)$ and $E(B^s)$ are given explicitly in Subsection~\ref{3.2}.  Then $|V(B^0)|=2(\sum\limits_{i=1}^{m}p_i+\sum\limits_{i=1}^{n}q_i)+2$. We may write $$\RR^{|V(B^0)|}=\{\sum\limits_{i=1}^{m}\sum\limits_{j=1}^{2p_{i}} a_{i,j}\ub_{i,j}+\sum\limits_{i=1}^{m}\sum\limits_{j=1}^{2p_{i}} b_{i,j}\vb_{i,j}+a\ub+b\vb\mid \mbox{ all } a_{i,j},b_{i,j}, a,b \in \RR \}.$$ Here, $\ub_{i,j},\vb_{i,j}, \ub, \vb$  correspond the vertices of $B^0$ in the natural way. Then, we could show the following vectors
 $$\alpha_{\ell}:=\sum\limits_{i=1}^{m}\sum\limits_{j=1}^{2p_{i}}\ub_{i,j}+\sum\limits_{i=1}^{n}\sum\limits_{j=1}^{2q_{i}}\vb_{i,j}+\vb+(2\ell+1)\ub, \quad \ell=0,\ldots,m-1$$
and $$\beta_{\ell}:=\sum\limits_{i=1}^{m}\sum\limits_{j=1}^{2p_{i}}\ub_{i,j}+\sum\limits_{i=1}^{n}\sum\limits_{j=1}^{2q_{i}}\vb_{i,j}+\ub+(2\ell+1)\vb, \quad \ell=1,\ldots,n-1$$
are all the minimal vectors of $\mathrm{relint}(\RR_+(B^0))\cap \ZZ^{|V(B^0)|}$.

On the other hand, we have $|V(B^s)|=|V(B^0)|+s-1$ and we may write
$$\RR^{|V(B^s)|}=\{\sum\limits_{i=1}^{m}\sum\limits_{j=1}^{2p_{i}}a_{i,j}\ub_{i,j}+\sum\limits_{i=1}^{n}\sum\limits_{j=1}^{2q_{i}}b_{i,j}\vb_{i,j}+
\sum\limits_{i=1}^{s-1}c_i\wb_{i}+a\ub+b\vb\mid \mbox{ all } a_{i,j},b_{i,j},c_i,a,b\in \RR\}.$$

Here, $\ub_{i,j},\vb_{i,j}, \wb_{i},\ub, \vb$  correspond the vertices of $B^s$ in the natural way. We may also show the following vectors
$$\alpha_{\ell}:=\sum\limits_{i=1}^{m}\sum\limits_{j=1}^{2p_{i}}\ub_{i,j}+\sum\limits_{i=1}^{n}\sum\limits_{j=1}^{2q_{i}}\vb_{i,j}+\sum\limits_{i=1}^{s-1}\wb_{i}+\vb+2\ell\ub, \ell=1,\ldots,m$$
and $$\beta_{\ell}:=\sum\limits_{i=1}^{m}\sum\limits_{j=1}^{2p_{i}}\ub_{i,j}+\sum\limits_{i=1}^{n}\sum\limits_{j=1}^{2q_{i}}\vb_{i,j}+\sum\limits_{i=1}^{s-1}\wb_{i}+\ub+2\ell\vb, \ell=1,\ldots,n$$
are all the minimal vectors of $\mathrm{relint}(\RR_+(B^s))\cap \ZZ^{|V(B^s)|}$.

\begin{Proposition} \label{two} Let $B^0$ and $B^s$ be defined as before. Assume $m\leq n$. Then the following statements hold:\begin{itemize}
                                                                     \item $\mathrm{type} (\mathbb{K}[B^0])=m+n-1$ and $\mathrm{type} (\mathbb{K}[B^s])=m+n$;
                                                                     \item the top graded Betti  numbers of $\mathbb{K}[B^0]$ are given by $$\beta_{m+n-1,j}(\mathbb{K}[B^0])= \left\{
                                                                                                          \begin{array}{ll}
                                                                                                            1, & \hbox{$j=\mathrm{mat}(B^0)+m-1+\ell, \quad \ell=1,\ldots,n-m$;} \\
                                                                                                            2, & \hbox{$j=\mathrm{mat}(B^0)+n-1+\ell, \quad \ell=1,\ldots,m-1$;} \\
                                                                                                            1, & \hbox{$j=\mathrm{mat}(B^0)+m+n-1$.}
                                                                                                            \\

                                                          0, & \hbox{otherwise.}
                                                                                                          \end{array}
                                                                                                        \right.$$

                                                                          \item the  top graded Betti  numbers of $\mathbb{K}[B^s]$ are given by $$\beta_{m+n,j}(\mathbb{K}[B^s])=\left\{
                                                                                                                   \begin{array}{ll}
                                                                                                                                                                                                                                          1, & \hbox{$j=\mathrm{mat}(B^s)+\ell, \quad \ell=m+1,\ldots,n$;} \\
                                                                                                                     2, & \hbox{$j=\mathrm{mat}(B^s)+n+\ell, \quad \ell=1,\ldots,m$;} \\

                                                          0, & \hbox{otherwise.}
                                                                                                                   \end{array}
                                                                                                                 \right.$$
                                                                   \end{itemize}

\end{Proposition}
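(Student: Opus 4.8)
The plan is to imitate the proof of Proposition~\ref{three}. For $B\in\{B^0,B^s\}$ I would first compute the minimal generators of the canonical module $\omega_{\mathbb{K}[B]}$, that is, the minimal vectors of $\mathrm{relint}(\RR_{+}(B))\cap\mathbb{Z}^{|V(B)|}$, and then read off the top graded Betti numbers of $\mathbb{K}[B]$ by Lemma~\ref{can}. For this one needs the facet description of $\RR_{+}(B)$ from \eqref{eq:ineq}, hence the regular vertices and the fundamental sets of $B$. For $B^0$ the regular vertices are exactly the $u_{i,j}$'s and $v_{i,j}$'s, while the fundamental sets come in four families: the bouquet families $\bigcup_{i}u_i^{(f_i)}$ and $\bigcup_{i}v_i^{(g_i)}$, and the hub families $\{u\}\cup\bigcup_{i}(u_i^{(f_i)}\setminus\{u_{i,1},u_{i,2p_i}\})\cup\bigcup_{i}v_i^{(g_i)}$ and its mirror obtained by swapping $u$ and $v$. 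For $B^s$ one adds the path $u-w_1-\cdots-w_{s-1}-v$; here each $w_i$ is still regular (deleting it leaves a connected subgraph, through the edge $\{u,v\}$, which retains odd cycles), $u$ and $v$ remain the only irregular vertices, and the two hub families acquire extra members indexed by dominating independent sets on $w_2,\dots,w_{s-1}$.

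Granting this, I would verify, just as for $C$ above, that each of the vectors $\alpha_\ell,\beta_\ell$ listed before the Proposition satisfies every defining inequality of $\RR_{+}(B)$ strictly -- each bouquet inequality having positive slack at these vectors (equal to a big-vertex coordinate) and each hub inequality having slack at least $2$ for the admissible $\ell$ -- so that $\alpha_\ell,\beta_\ell\in\mathrm{relint}(\RR_{+}(B))\cap\mathbb{Z}^{|V(B)|}$. For minimality, if $\alpha_\ell=\alpha'+\alpha''$ with $\alpha'$ in the relative interior lattice and $\alpha''$ a nonzero lattice point of $\RR_{+}(B)$, then the strict regular-vertex inequalities force every coordinate of $\alpha'$ other than the $u$- and $v$-coordinates to be at least $1$, and a bouquet inequality, taken in each cycle on the half of larger $\alpha'$-weight, forces the $v$-coordinate of $\alpha'$ to be at least $1$ as well; hence $\alpha''$ collapses to a nonnegative multiple of the $u$-hub unit vector, which lies in $\RR_{+}(B)$ only if it is ${\bf 0}$ -- a contradiction. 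The $\beta_\ell$'s are treated symmetrically.

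Next, each minimal vector of $\mathrm{relint}(\RR_{+}(B))\cap\mathbb{Z}^{|V(B)|}$ gives a minimal generator of $\omega_{\mathbb{K}[B]}$, so $\mathrm{type}(\mathbb{K}[B])=\beta_0(\omega_{\mathbb{K}[B]})$ is at least $m+n-1$ for $B^0$ and $m+n$ for $B^s$. But $\mathrm{type}(\mathbb{K}[B])$ is the top total Betti number of $\mathbb{K}[B]$, equal by Theorem~\ref{main4} to $\mathfrak{t}(B)-1$; and $\mathfrak{t}(B^0)=m+n$ (the $m$ odd cycles at $u$ and the $n$ odd cycles at $v$) while $\mathfrak{t}(B^s)=m+n+1$ (one extra induced cycle $u-w_1-\cdots-w_{s-1}-v-u$). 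So equality holds throughout, which proves the first bullet and shows that the $\alpha_\ell,\beta_\ell$ are all the minimal generators of $\omega_{\mathbb{K}[B]}$. Since a monomial $x^\gamma$ of $\mathbb{K}[B]\subset\mathbb{K}[V(B)]$ has $\mathbb{K}[E(B)]$-degree $\frac{1}{2}|\gamma|$, computing $|\alpha_\ell|$ and $|\beta_\ell|$ yields $\beta_{0,j}(\omega_{\mathbb{K}[B]})$; then Lemma~\ref{can}, with $p=\mathfrak{t}(B)-1$ and with $|E(B^0)|=2(\sum_{i=1}^{m}p_{i}+\sum_{i=1}^{n}q_{i})+m+n+1$, $|E(B^s)|=|E(B^0)|+s$, $\mathrm{mat}(B^0)=\sum_{i=1}^{m}p_{i}+\sum_{i=1}^{n}q_{i}+1$ and $\mathrm{mat}(B^s)=\sum_{i=1}^{m}p_{i}+\sum_{i=1}^{n}q_{i}+\frac{s}{2}$, converts these data into the two displayed tables after a routine reindexing of $\ell$.

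The only genuinely laborious step is the determination of the fundamental sets of $B^s$, where the path $u-w_1-\cdots-w_{s-1}-v$ and the extra odd cycle interact with the two bouquets of odd cycles, together with the check that none of the resulting (rather numerous) inequalities is tight at any $\alpha_\ell$ or $\beta_\ell$. Everything else -- the relative-interior check, the subtraction argument for minimality, the degree bookkeeping, and the final application of Lemma~\ref{can} -- runs in complete parallel with the type-3 case already carried out in detail.
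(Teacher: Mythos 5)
Your proposal takes exactly the route the paper intends for the type-two case: it transplants the type-three argument (facet description of the edge cone via regular vertices and fundamental sets, strict verification that the listed $\alpha_\ell,\beta_\ell$ are minimal lattice points of $\mathrm{relint}(\RR_+(B))$, the upper bound $\mathrm{type}(\mathbb{K}[B])\le \mathfrak{t}(B)-1$ coming from Theorem~\ref{main4}, and dualization via Lemma~\ref{can}), and your bookkeeping of $\mathfrak{t}(B^0)=m+n$, $\mathfrak{t}(B^s)=m+n+1$, $|E(B^0)|$, $|E(B^s)|$ and the matching numbers does reproduce the two displayed tables. The paper itself only outlines this case, referring to the detailed proof of Proposition~\ref{three}, so your plan is essentially its proof.
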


\section{A question}

Let $G$ be a compact graph, and let $I_G$ be the toric ideal of $\mathbb{K}[G]$. Denote by $J_G$ the initial ideal of $I_G$ with respect to the order given in Section 3. As we have seen in the previous section,  $I_G$ and $J_G$ share the same top graded Betti numbers.   This naturally leads to the following question:
\begin{center}
{\it Does  $I_G$ and $J_G$ always share the same  graded Betti numbers?}
\end{center}
Unfortunately, we are unable to provide a general answer to this question, except for a very specific case when $G$ is a compact graph of type one.

 In what follows, we use $A$ to denote  the compact graph $A_{\underline{p}}$, where $\underline{p}=(\overbrace{p,\ldots,p}^{m})$ is a vector in $\ZZ_+^m$.   Let $f(t)$ and $g(t)$ denote the polynomial $\sum\limits_{i,j}\beta_{i,j}(I_A)(-1)^it^j$ and  $\sum\limits_{i,j}\beta_{i,j}(J_A)(-1)^it^j$, respectively. It is known $f(t)=g(t)$ and $\beta_{i,j}(I_A)\leq \beta_{i,j}(J_A)$ for all $i,j$.

\begin{Proposition} If $2\leq m\leq p+3$, then $$\beta_{i,j}(I_A)=\beta_{i,j}(J_A)=\left\{
                                                                   \begin{array}{ll}
                                                                     \binom{m}{i+2}, & \hbox{$j=(i+2)p+\ell, \ell=1,\ldots,i+1$;} \\
                                                                     0, & \hbox{otherwise.}
                                                                   \end{array}
                                                                 \right.
$$
\end{Proposition}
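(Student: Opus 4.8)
The plan is to bootstrap from the explicit computation of $\beta_{i,j}(J_A)$ in Proposition~\ref{Betti} together with the two facts recalled just above the statement: $\beta_{i,j}(I_A)\le\beta_{i,j}(J_A)$ for all $i,j$, and $f(t)=g(t)$. Put $\delta_{i,j}:=\beta_{i,j}(J_A)-\beta_{i,j}(I_A)\ge 0$. Then $f(t)=g(t)$ gives $\sum_i(-1)^i\delta_{i,j}=0$ for every $j$, while the inequality gives $\delta_{i,j}=0$ whenever $\beta_{i,j}(J_A)=0$. So it suffices to prove $\delta_{i,j}=0$ for all $i,j$, and by the last remark we need only control the (few) homological degrees $i$ for which $\beta_{i,j}(J_A)\neq 0$.

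The first step is a counting lemma for the non-top homological degrees. By Proposition~\ref{Betti}, writing $a:=i+2$, one has $\beta_{i,j}(J_A)\neq 0$ only if $2\le a\le m$ and $ap+1\le j\le ap+a-1$. I claim that when $2\le m\le p+3$, for each fixed $j$ there is at most one index $i\le m-3$ with $\beta_{i,j}(J_A)\neq 0$. Indeed, if $a<a'$ were the shifts of two such indices, both lying in $[2,m-1]$ and sharing a common $j$, then $a'p+1\le j\le ap+a-1$ would force $(a'-a)p\le a-2$, hence $a\ge p+2$ since $a'-a\ge 1$ and $p\ge 1$; but $a\le a'-1\le m-2\le p+1$, a contradiction.

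The second step handles the top homological degree $i=m-2$. Since $A=A_{\underline p}$ has exactly $m$ cycles, all induced by Lemma~\ref{2.3}, we have $\mathfrak t(A)=m$, so Corollary~\ref{main41}(1) gives $\mathrm{pdim}(\mathbb{K}[A])=m-1$, i.e. $\mathrm{pdim}(I_A)=m-2=\mathrm{pdim}(J_A)$ (Proposition~\ref{total1}). By Theorem~\ref{5.1}, $I_A$ and $J_A$ have the same top graded Betti numbers, hence $\beta_{m-2,j}(I_A)=\beta_{m-2,j}(J_A)$ for every $j$; that is, $\delta_{m-2,j}=0$ for all $j$.

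Finally, combine the two steps. Fix $j$. The only $i$ with $\beta_{i,j}(J_A)\neq 0$ are a possible unique $i_0\le m-3$ and possibly $i=m-2$ (nothing above $m-2$, since $\mathrm{pdim}(J_A)=m-2$). Hence $0=\sum_i(-1)^i\delta_{i,j}=(-1)^{i_0}\delta_{i_0,j}+(-1)^{m-2}\delta_{m-2,j}=(-1)^{i_0}\delta_{i_0,j}$, using $\delta_{m-2,j}=0$; so $\delta_{i_0,j}=0$. For every remaining $i$ we have $\beta_{i,j}(J_A)=0$, whence $\delta_{i,j}=0$. Thus $\beta_{i,j}(I_A)=\beta_{i,j}(J_A)$ for all $i,j$, with the value supplied by Proposition~\ref{Betti}. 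The delicate point, and the reason the hypothesis is $m\le p+3$ rather than $m\le p+4$, is that for $m=p+3$ the degree $j=p^2+3p+1$ receives contributions from two homological positions, $i=p$ and $i=p+1=m-2$; the bound $m\le p+3$ is exactly what forces the second of these to be the top position $m-2$, where Theorem~\ref{5.1} already pins the Betti numbers down, while the counting lemma of the first step disposes of everything below.
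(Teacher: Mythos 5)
Your proposal is correct and takes essentially the same route as the paper: both rest on Proposition~\ref{Betti} for $\beta_{i,j}(J_A)$, the inequalities $\beta_{i,j}(I_A)\le\beta_{i,j}(J_A)$ together with $f(t)=g(t)$, the observation that for $m\le p+3$ the column supports in distinct homological degrees can only collide at the pair $(m-3,m-2)$ (when $m=p+3$, at $j=mp+1$), and Theorem~\ref{5.1} to settle the top row. Your version merely repackages the paper's two-case analysis into a uniform counting lemma plus cancellation of the entire top homological degree, which is a cosmetic rather than substantive difference.
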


\begin{proof} Put $A_i=\{j\in \mathbb{Z}\mid \beta_{i,j}(J_A)\neq 0\}$ for all $i\geq 0$. Then, by Proposition~\ref{Betti}, we have $A_i=\{(i+2)p+\ell\mid \ell=1,\ldots,i+1\}$ for $0\leq i\leq m-2$,  and is $\emptyset$ otherwise. Given that $j\notin A_i$ it can be inferred that $\beta_{i,j}(J_A)=\beta_{i,j}(I_A)=0$. Therefore, we will next consider only the case when $j\in A_i$.

(1) If $m\leq p+2$ then it follows that $A_{i_1}\cap A_{i_2}=\emptyset$ for any distinct  $i_1$ and $i_2$.
  Consequently,  for any $j\in A_i$, the coefficient of $t^j$ in $f(t)$  is $(-1)^i\beta_{i,j}(J_A)$, while in $g(t)$ it is  $(-1)^i\beta_{i,j}(I_A)$. Therefore we can deduce that  $\beta_{i,j}(J_A)=\beta_{i,j}(I_A)$.

(2) If $m=p+3$, then for any pair $i_1\neq i_2$, $A_{i_1}\cap A_{i_2}\neq \emptyset$ if and only if $\{i_1,i_2\}=\{m-3,m-2\}$ and in that case $A_{m-3}\cap A_{m-2}=\{mp+1\}=\{(m-1)p+m-2\}.$ If $j\neq mp+1$ then it follows that $\beta_{i,j}(I_A)=\beta_{i,j}(J_A)$ for the same reason as in (1). If $j=mp+1$ then, by comparing the coefficients of $t^{mp+1}$ in polynomials $f(t)$ and $g(t)$, we conclude that $$\beta_{m-3,mp+1}(I_A)-\beta_{m-2, mp+1}(I_A)=\beta_{m-3,mp+1}(J_A)-\beta_{m-2, mp+1}(J_A).$$  On the other hand, we have $\beta_{m-2, mp+1}(I_A)=\beta_{m-2, mp+1}(J_A)$ by Theorem~\ref{5.1}. From this it follows that  $\beta_{m-3,mp+1}(I_A)=\beta_{m-3,mp+1}(J_A)$, as required.
\end{proof}

\vspace{4mm}

{\bf \noindent Acknowledgment:} This project is supported by NSFC (No. 11971338) The authors are grateful to the software systems \cite{C1}
for providing us with a large number of examples to develop ideas and test our results.

\vspace{5mm}

{\bf \noindent Data availability:} The data used to support the findings of this study are included within the article.

\vspace{5mm}

{\bf \noindent Statement:}    On behalf of all authors, the corresponding author states that there is no conflict of interest.

\end{document}